\definecolor{myurlcolor}{rgb}{0.5,0,0}
\definecolor{mycitecolor}{rgb}{0,0,1}
\definecolor{myrefcolor}{rgb}{0,0,1}
\definecolor{hyperrefcolor}{rgb}{0,0,0.7}
\newcommand{\id}{\mathrm{id}}
\newcommand{\La}{\mathcal{L}}
\newcommand{\N}{\mathbb{N}}
\newcommand{\R}{\mathbb{R}}
\newcommand{\Ob}{\mathrm{Ob}}
\newcommand{\Mor}{\mathrm{Mor}}
\newcommand{\Set}{\mathsf{Set}}
\newcommand{\Graph}{\mathsf{Graph}}
\newcommand{\Dynam}{\mathsf{Dynam}}
\newcommand{\Petri}{\mathsf{Petri}}
\newcommand{\A}{\mathsf{A}}
\newcommand{\C}{\mathsf{C}}
\newcommand{\T}{\mathsf{T}}
\newcommand{\D}{\mathsf{D}}
\newcommand{\X}{\mathsf{X}}
\newcommand{\Z}{\mathsf{Z}}
\newcommand{\Fin}{\mathsf{Fin}}
\newcommand{\Rel}{\mathsf{Rel}}
\newcommand{\Cospan}{\mathsf{Cospan}}
\newcommand{\Csp}{\mathsf{Csp}}
\newcommand{\Circ}{\mathsf{Circ}}
\newcommand{\CMC}{\mathsf{CMC}}
\newcommand{\RxNet}{\mathsf{RxNet}}
\newcommand{\Dbl}{\mathbf{Dbl}}
\newcommand{\bCsp}{\mathbf{Csp}}
\newcommand{\Cat}{\mathbf{Cat}}
\newcommand{\Rex}{\mathbf{Rex}}
\newcommand{\SMC}{\mathbf{SymMonCat}}
\newcommand{\B}{\mathbf{B}}
\newcommand{\lCsp}{\mathbb{C}\mathbf{sp}}
\newcommand{\Open}{\mathbb{O}\mathbf{pen}}
\newcommand{\lCospan}{\mathbb{C}\mathbf{ospan}}
\newcommand{\define}[1]{{\bf \boldmath{#1}}}
\newcommand{\lA}{\ensuremath{\mathbb{A}}}
\newcommand{\lB}{\ensuremath{\mathbb{B}}}
\newcommand{\lC}{\ensuremath{\mathbb{C}}}
\newcommand{\lD}{\ensuremath{\mathbb{D}}}
\newcommand{\lF}{\ensuremath{\mathbb{F}}}
\newcommand{\lG}{\ensuremath{\mathbb{G}}}
\newcommand{\lX}{\ensuremath{\mathbb{X}}}
\newcommand{\bB}{\ensuremath{\mathbf{B}}}
\newcommand{\bC}{\ensuremath{\mathbf{C}}}
\newcommand{\bD}{\ensuremath{\mathbf{D}}}
\newcommand{\bX}{\ensuremath{\mathbf{X}}}
\newcommand{\To}{\Rightarrow}
\newcommand{\maps}{\colon}
\def\toiso{\xrightarrow{\smash{\raisebox{-.5mm}{$\;\scriptstyle\sim\;$}}}}
\theoremstyle{plain}
\newtheorem{thm}{Theorem}[section]
\newtheorem{lem}[thm]{Lemma}
\newtheorem{prop}[thm]{Proposition}
\newtheorem{cor}[thm]{Corollary}
\newtheorem{defn}[thm]{Definition}
\theoremstyle{remark}
\definecolor{rewritecolor}{rgb}{0,.9,1}
\tikzset{rewritenode/.style={shape=circle,fill=rewritecolor,scale=0.25,font=\Huge}}
\tikzset{RWopen/.style={shape=circle,draw=black,fill=white,scale=0.5,font=\Huge}}
\tikzset{RWclosed/.style={shape=circle,fill=black,scale=0.5,font=\Huge}}
\tikzset{CDnode/.style={shape=circle,fill=white,scale=.5}}
\let\ea\expandafter
\newcommand\SWarrow{\mathrel{\rotatebox[origin=c]{-135}{$\Rightarrow$}}}
\newcommand\SEarrow{\mathrel{\rotatebox[origin=c]{-45}{$\Rightarrow$}}}
\newcommand\NEarrow{\mathrel{\rotatebox[origin=c]{45}{$\Rightarrow$}}}
\definecolor{lblue}{rgb}{0,250,255}
\tikzstyle{species}=[circle,fill=yellow,draw=black,scale=1.15]
\tikzstyle{transition}=[rectangle,fill=lblue,draw=black,scale=1.15]
\tikzstyle{inarrow}=[->, >=stealth, shorten >=.03cm,line width=1.5]
\tikzstyle{empty}=[circle,fill=none, draw=none]
\tikzstyle{inputdot}=[circle,fill=purple,draw=purple, scale=.25]
\tikzstyle{inputarrow}=[->,draw=purple, shorten >=.05cm]
\tikzstyle{simple}=[-,draw=purple,line width=1.000]
\tikzstyle{none}=[inner sep=0pt]
\tikzset{->-/.style={decoration={
  markings,
  mark=at position .5 with {\arrow{>}}},postaction={decorate}}}
\tikzstyle{none}=[inner sep=0pt]
\tikzstyle{connection}=[circuit symbol open,
\tikzstyle{bdot}=[circle, fill=black, draw, inner sep=1.5pt, anchor=center]
\tikzstyle{circ}=[circle,fill=black,draw,inner sep=3pt]
\keywords{bicategory, cospan, double category, monoidal category, network}
\begin{document}
\sloppy

\title{Structured cospans}
\author{John\ C.\ Baez and Kenny Courser}
\maketitle

\address{Department of Mathematics, University of California, 
 Riverside, CA 92521, USA\\[5pt]}

\eaddress{%
baez@math.ucr.edu \\
\null \hspace{2.6em} courser@math.ucr.edu}


\vspace{-0.4em}
\begin{abstract}
\noindent
One goal of applied category theory is to better understand networks appearing throughout science and engineering.  Here we introduce `structured cospans' as a way to study networks with inputs and outputs.   Given a functor $L \maps \A \to \X$, a structured cospan is a diagram in $\X$ of the form $L(a) \rightarrow x \leftarrow L(b)$.   If $\A$ and $\X$ have finite colimits and $L$ is a left adjoint, we obtain a symmetric monoidal category whose objects are those of $\A$ and whose morphisms are isomorphism classes of structured cospans.     This is a hypergraph category.   However, it arises from a more fundamental structure: a symmetric monoidal double category where the horizontal 1-cells are structured cospans.   We show how structured cospans solve certain problems in the closely related formalism of `decorated cospans', and explain how they work in some examples: electrical circuits, Petri nets, and 
chemical reaction networks.
\end{abstract}

 \vspace{-0.4em}
\section{Introduction}
\label{sec:intro}

Structured cospans are a framework for dealing with open networks: that is, networks with inputs and outputs.  Networks arise in many areas of science and engineering and come in many kinds, but a companion paper illustrates the general framework developed here with the example of open Petri nets \cite{BM}, so let us consider those.

Petri nets are important in computer science, chemistry and other subjects.   For example, the chemical reaction that takes two atoms of hydrogen and one atom of oxygen and produces a molecule of water can be represented by this very simple Petri net:
\[
\scalebox{0.8}{
\begin{tikzpicture}
	\begin{pgfonlayer}{nodelayer}
		\node [style=species] (I) at (0,1) {$\;$H$\phantom{|}$};
		\node [style=species] (T) at (0,-1) {$\;$O$\phantom{|}$};
		\node [style=transition] (W) at (2,0) {$\,\;\alpha\;\phantom{\Big|}$};
		\node [style=species] (Water) at (4,0) {$\textnormal{H}_2$O};
	\end{pgfonlayer}
	\begin{pgfonlayer}{edgelayer}
		\draw [style=inarrow, bend right=40, looseness=1.00] (I) to (W);
		\draw [style=inarrow, bend left=40, looseness=1.00] (I) to (W);
		\draw [style=inarrow, bend right=40, looseness=1.00] (T) to (W);
		\draw [style=inarrow] (W) to (Water);
	\end{pgfonlayer}
\end{tikzpicture}
}
\]
Here we have a set of `places' (or in chemistry, `species') drawn in yellow 
and a set of `transitions' (or `reactions') drawn in blue.   The disjoint union of these two sets then forms the vertex set of a directed bipartite graph, which is one description of a Petri net.

Networks can often be seen as pieces of larger networks.  This naturally leads to the idea of an \emph{open} Petri net, meaning that the set of places is equipped with `inputs' and `outputs'. We can do this by prescribing two functions into the set of places that pick out these inputs and outputs. For example:
\[
\scalebox{0.8}{
\begin{tikzpicture}
	\begin{pgfonlayer}{nodelayer}
		\node [style=species] (I) at (-2,1) {$\;$H$\phantom{|}$};
		\node [style=species] (T) at (-2,-1) {$\;$O$\phantom{|}$};
		\node [style=transition] (W) at (0,0) {$\,\;\alpha\;\phantom{\Big|}$};
		\node [style=species] (Water) at (1.5,0) {$\textnormal{H}_2$O};
		\node [style=none] (1) at (-3.25,1) {1};
		\node [style=none] (2'') at (-3.25,-0.3) {2};
		\node [style=none] (2) at (-3.25,-1) {3};
		\node [style=none] (ATL) at (-3.65,1.4) {};
		\node [style=none] (ATR) at (-2.85,1.4) {};
		\node [style=none] (ABR) at (-2.85,-1.4) {};
		\node [style=none] (ABL) at (-3.65,-1.4) {};
		\node [style=none] (X) at (-3.25,1.8) {$a$};
		\node [style=inputdot] (inI) at (-3.05,1) {};
		\node [style=inputdot] (inS) at (-3.05,-1) {};
		\node [style=inputdot] (inI') at (-3.05,-0.3) {};
		\node [style=none] (Z) at (3,1.8) {$b$};
		\node [style=none] (1'') at (3,0) {4};
		\node [style=none] (MTL) at (2.6,1.4) {};
		\node [style=none] (MTR) at (3.4,1.4) {};
		\node [style=none] (MBR) at (3.4,-1.4) {};
		\node [style=none] (MBL) at (2.6,-1.4) {};
		\node [style=inputdot] (outI') at (2.8,0) {};
	\end{pgfonlayer}
	\begin{pgfonlayer}{edgelayer}
		\draw [style=inarrow, bend right=40, looseness=1.00] (I) to (W);
		\draw [style=inarrow, bend left=40, looseness=1.00] (I) to (W);
		\draw [style=inarrow, bend right=40, looseness=1.00] (T) to (W);
		\draw [style=inarrow] (W) to (Water);
		\draw [style=simple] (ATL.center) to (ATR.center);
		\draw [style=simple] (ATR.center) to (ABR.center);
		\draw [style=simple] (ABR.center) to (ABL.center);
		\draw [style=simple] (ABL.center) to (ATL.center);
		\draw [style=simple] (MTL.center) to (MTR.center);
		\draw [style=simple] (MTR.center) to (MBR.center);
		\draw [style=simple] (MBR.center) to (MBL.center);
		\draw [style=simple] (MBL.center) to (MTL.center);
		\draw [style=inputarrow] (inI) to (I);
		\draw [style=inputarrow] (inI') to (T);
		\draw [style=inputarrow] (inS) to (T);
		\draw [style=inputarrow] (outI') to (Water);
	\end{pgfonlayer}
\end{tikzpicture}
}
\]
The inputs and outputs let us compose open Petri nets.  For example, suppose we have another open Petri net that represents the chemical reaction of two molecules of water turning into hydronium and hydroxide:
\[
\scalebox{0.8}{
\begin{tikzpicture}
	\begin{pgfonlayer}{nodelayer}
		\node [style=species] (Water2) at (4,0) {$\textnormal{H}_2$O};
		\node [style=transition] (Something) at (6,0) {$\,\;\beta\;\phantom{\Big|}$};
		\node [style=species] (A) at (8,1) {O$\textnormal{H}^{-}$};
		\node [style=species] (B) at (8,-1) {$\textnormal{H}_3 \textnormal{O}^{+}$};
		\node [style=none] (1') at (9.5,1) {5};
		\node [style=none] (3') at (9.5,0.3) {6};
		\node [style=none] (2') at (9.5,-1) {7};
		\node [style=none] (BTL) at (9.1,1.4) {};
		\node [style=none] (BTR) at (9.9,1.4) {};
		\node [style=none] (BBR) at (9.9,-1.4) {};
		\node [style=none] (BBL) at (9.1,-1.4) {};
		\node [style=none] (Y) at (9.5,1.8) {$c$};
		\node [style=inputdot] (outI) at (9.3,1) {};
		\node [style=inputdot] (outR) at (9.3,0.3) {};
		\node [style=inputdot] (outS) at (9.3,-1) {};
		\node [style=none] (Z) at (2.5,1.8) {$b$};
		\node [style=none] (1'') at (2.5,0) {4};
		\node [style=none] (MTL) at (2.1,1.4) {};
		\node [style=none] (MTR) at (2.9,1.4) {};
		\node [style=none] (MBR) at (2.9,-1.4) {};
		\node [style=none] (MBL) at (2.1,-1.4) {};
		\node [style=inputdot] (inI') at (2.7,0) {}; 
	\end{pgfonlayer}
	\begin{pgfonlayer}{edgelayer}
		\draw [style=inarrow, bend left=40, looseness=1.00] (Water2) to (Something);
		\draw [style=inarrow, bend right=40, looseness=1.00] (Water2) to (Something);
		\draw [style=inarrow, bend left=40, looseness=1.00] (Something) to (A);
		\draw [style=inarrow, bend right=40, looseness=1.00] (Something) to (B);
		\draw [style=simple] (BTL.center) to (BTR.center);
		\draw [style=simple] (BTR.center) to (BBR.center);
		\draw [style=simple] (BBR.center) to (BBL.center);
		\draw [style=simple] (BBL.center) to (BTL.center);
		\draw [style=simple] (MTL.center) to (MTR.center);
		\draw [style=simple] (MTR.center) to (MBR.center);
		\draw [style=simple] (MBR.center) to (MBL.center);
		\draw [style=simple] (MBL.center) to (MTL.center);
		\draw [style=inputarrow] (outI) to (A);
		\draw [style=inputarrow] (outR) to (A);
		\draw [style=inputarrow] (outS) to (B);
		\draw [style=inputarrow] (inI') to (Water2);
	\end{pgfonlayer}
\end{tikzpicture}
}
\]
Since the outputs of the first open Petri net coincide with the inputs of the second, we can compose them by identifying the outputs of the first with the inputs of the second:  
\[
\scalebox{0.8}{
\begin{tikzpicture}
	\begin{pgfonlayer}{nodelayer}
		\node [style=species] (I) at (0,1) {$\;$H$\phantom{|}$};
		\node [style=species] (T) at (0,-1) {$\;$O$\phantom{|}$};
		\node [style=transition] (W) at (2,0) {$\,\;\alpha\;\phantom{\Big|}$};
		\node [style=species] (Water) at (4,0) {$\textnormal{H}_2$O};
		\node [style=transition] (Something) at (6,0) {$\,\;\beta\;\phantom{\Big|}$};
		\node [style=species] (A) at (8,1) {O$\textnormal{H}^{-}$};
		\node [style=species] (B) at (8,-1) {$\textnormal{H}_3 \textnormal{O}^{+}$};
		\node [style=none] (1) at (-1.25,1) {1};
		\node [style=none] (2a) at (-1.25,-0.3) {2};
		\node [style=none] (2) at (-1.25,-1) {3};
		\node [style=none] (1') at (9.5,1) {5};
		\node [style=none] (1'') at (9.5,0.3) {6};
		\node [style=none] (2') at (9.5,-1) {7};
		\node [style=none] (ATL) at (-1.65,1.4) {};
		\node [style=none] (ATR) at (-0.85,1.4) {};
		\node [style=none] (ABR) at (-0.85,-1.4) {};
		\node [style=none] (ABL) at (-1.65,-1.4) {};
		\node [style=none] (BTL) at (9.1,1.4) {};
		\node [style=none] (BTR) at (9.9,1.4) {};
		\node [style=none] (BBR) at (9.9,-1.4) {};
		\node [style=none] (BBL) at (9.1,-1.4) {};
		\node [style=none] (X) at (-1.25,1.8) {$a$};
		\node [style=none] (Y) at (9.5,1.8) {$c$};
		\node [style=inputdot] (outI) at (9.3,1) {};
		\node [style=inputdot] (outR) at (9.3,0.3) {};
		\node [style=inputdot] (outS) at (9.3,-1) {};
		\node [style=inputdot] (inI) at (-1.05,1) {};
		\node [style=inputdot] (inI') at (-1.05,-0.3) {};
		\node [style=inputdot] (inS) at (-1.05,-1) {};
	\end{pgfonlayer}
	\begin{pgfonlayer}{edgelayer}
		\draw [style=inarrow, bend right=40, looseness=1.00] (I) to (W);
		\draw [style=inarrow, bend left=40, looseness=1.00] (I) to (W);
		\draw [style=inarrow, bend right=40, looseness=1.00] (T) to (W);
		\draw [style=inarrow] (W) to (Water);
		\draw [style=inarrow, bend left=40, looseness=1.00] (Water) to (Something);
		\draw [style=inarrow, bend right=40, looseness=1.00] (Water) to (Something);
		\draw [style=inarrow, bend left=40, looseness=1.00] (Something) to (A);
		\draw [style=inarrow, bend right=40, looseness=1.00] (Something) to (B);
		\draw [style=simple] (ATL.center) to (ATR.center);
		\draw [style=simple] (ATR.center) to (ABR.center);
		\draw [style=simple] (ABR.center) to (ABL.center);
		\draw [style=simple] (ABL.center) to (ATL.center);
		\draw [style=simple] (BTL.center) to (BTR.center);
		\draw [style=simple] (BTR.center) to (BBR.center);
		\draw [style=simple] (BBR.center) to (BBL.center);
		\draw [style=simple] (BBL.center) to (BTL.center);
		\draw [style=inputarrow] (outI) to (A);
		\draw [style=inputarrow] (outR) to (A);
		\draw [style=inputarrow] (outS) to (B);
		\draw [style=inputarrow] (inI) to (I);
		\draw [style=inputarrow] (inI') to (T);
		\draw [style=inputarrow] (inS) to (T);
	\end{pgfonlayer}
\end{tikzpicture}
}
\]
Similarly we can `tensor' two open Petri nets by placing them side by side:
\[
\scalebox{0.8}{
\begin{tikzpicture}
	\begin{pgfonlayer}{nodelayer}
		\node [style=inputdot] (inI) at (2.95,2) {};
		\node [style=inputdot] (inS) at (2.95,4) {};
		\node [style=inputdot] (outI') at (9.3,3) {};
		\node [style=species] (I) at (4,4) {$\;$H$\phantom{|}$};
		\node [style=species] (T) at (4,2) {$\;$O$\phantom{|}$};
		\node [style=transition] (W) at (6,3) {$\,\;\alpha\;\phantom{\Big|}$};
		\node [style=species] (Water) at (8,3) {$\textnormal{H}_2$O};
		\node [style=none] (1) at (2.75,4) {1};
		\node [style=none] (2a) at (2.75,2.7) {2};
		\node [style=none] (2) at (2.75,2) {3};
		\node [style=none] (1''') at (9.5,3) {4};
		\node [style=species] (Water2) at (4,0.2) {$\textnormal{H}_2$O};
		\node [style=transition] (Something) at (6,0.2) {$\,\;\beta\;\phantom{\Big|}$};
		\node [style=species] (A) at (8,1.2) {O$\textnormal{H}^{-}$};
		\node [style=species] (B) at (8,-0.7) {$\textnormal{H}_3 \textnormal{O}^{+}$};
		\node [style=none] (1') at (9.5,1.2) {5};
		\node [style=none] (1'') at (9.5,0.5) {6};
		\node [style=none] (2') at (9.5,-0.7) {7};
		\node [style=none] (BTL) at (9.1,4.4) {};
		\node [style=none] (BTR) at (9.9,4.4) {};
		\node [style=none] (BBR) at (9.9,-1.4) {};
		\node [style=none] (BBL) at (9.1,-1.4) {};
		\node [style=none] (Y) at (9.5,4.8) {$b+c$};
		\node [style=inputdot] (outI) at (9.3,1.2) {};
		\node [style=inputdot] (outR) at (9.3,0.5) {};
		\node [style=none] (Z) at (2.75,4.8) {$a+b$};
		\node [style=none] (1'') at (2.75,0.2) {4};
		\node [style=none] (MTL) at (2.35,4.4) {};
		\node [style=none] (MTR) at (3.15,4.4) {};
		\node [style=none] (MBR) at (3.15,-1.4) {};
		\node [style=none] (MBL) at (2.35,-1.4) {};
		\node [style=inputdot] (inI') at (2.95,0.2) {}; 
		\node [style=inputdot] (inI'') at (2.95,2.7) {}; 
		\node [style=inputdot] (outS) at (9.3,-0.7) {};
	\end{pgfonlayer}
	\begin{pgfonlayer}{edgelayer}
		\draw [style=inarrow, bend right=40, looseness=1.00] (I) to (W);
		\draw [style=inarrow, bend left=40, looseness=1.00] (I) to (W);
		\draw [style=inarrow, bend right=40, looseness=1.00] (T) to (W);
		\draw [style=inarrow] (W) to (Water);
		\draw [style=inarrow, bend left=40, looseness=1.00] (Water2) to (Something);
		\draw [style=inarrow, bend right=40, looseness=1.00] (Water2) to (Something);
		\draw [style=inarrow, bend left=40, looseness=1.00] (Something) to (A);
		\draw [style=inarrow, bend right=40, looseness=1.00] (Something) to (B);
		\draw [style=simple] (BTL.center) to (BTR.center);
		\draw [style=simple] (BTR.center) to (BBR.center);
		\draw [style=simple] (BBR.center) to (BBL.center);
		\draw [style=simple] (BBL.center) to (BTL.center);
		\draw [style=simple] (MTL.center) to (MTR.center);
		\draw [style=simple] (MTR.center) to (MBR.center);
		\draw [style=simple] (MBR.center) to (MBL.center);
		\draw [style=simple] (MBL.center) to (MTL.center);
		\draw [style=inputarrow] (outI) to (A);
		\draw [style=inputarrow] (outR) to (A);
		\draw [style=inputarrow] (outS) to (B);
		\draw [style=inputarrow] (inI') to (Water2);
		\draw [style=inputarrow] (inI) to (T);
		\draw [style=inputarrow] (inI'') to (T);
		\draw [style=inputarrow] (inS) to (I);
		\draw [style=inputarrow] (outI') to (Water);
	\end{pgfonlayer}
\end{tikzpicture}
}
\]

We can formalize this example using `structured cospans'.
Given a functor $L \maps \A \to \X$, a \define{structured cospan} is a diagram in $\X$ of the form
\[ \xymatrix{  & x  &  \\ L(a) \ar[ur]^{i} & & L(b). \ar[ul]_{o} } \]
The objects $a$ and $b$ are called the \define{input} and \define{output}, respectively, while
$x$ is called the \define{apex}.  The morphisms $i$ and $o$ are called the \define{legs}
of the cospan.   

Typically the input and output of a structured cospan are simpler in nature than the apex.  For example, an open Petri net is a structured cospan where $a$ and $b$ are sets while $x$ is a Petri net.  As explained in Section \ref{subsec:Petri}, there is a category $\Petri$ with Petri nets as objects and a functor $L \maps \Set \to \Petri$ sending any set to the Petri net with that set of places and no transitions.    Furthermore, $L$ is a left adjoint, so it preserves colimits.   This occurs in many examples.

Given a functor $L \maps \A \to \X$, we can compose structured cospans whenever $\X$ has pushouts.  In Corollary \ref{_L Csp(X) category} we show this gives a category ${}_L \Csp(\X)$ with:
\begin{itemize}
\item objects of $\A$ as objects,
\item isomorphism classes of structured cospans as morphisms.
\end{itemize}
Here we say two structured cospans $L(a) \rightarrow x \leftarrow L(b)$ and $L(a) \rightarrow y \leftarrow L(b)$ are \define{isomorphic} if there is an isomorphism $f \maps x \to y$ such that the diagram
\[
\begin{tikzpicture}[scale=1.5]
\node (E) at (3,-0.5) {$L(a)$};
\node (F) at (5,-0.5) {$L(b)$};
\node (G) at (4,0) {$x$};
\node (G') at (4,-1) {$y$};
\path[->,font=\scriptsize,>=angle 90]
(F) edge node[above]{$$} (G)
(G) edge node[left]{$f$} (G')
(E) edge node[above]{$$} (G)
(E) edge node[above]{$$} (G')
(F) edge node[below]{$$} (G');
\end{tikzpicture}
\]
commutes.    In Corollary \ref{cor:LCsp(X) symmetric category} we show this category $_L\Csp(\X)$ becomes symmetric monoidal when $\A$ and $\X$ have finite colimits and $L$ preserves them.  
Under these assumptions, in Theorem \ref{thm:hypergraph} we prove that $_L\Csp(\X)$ is actually a special sort of symmetric monoidal category called a `hypergraph category' \cite{FongSpivak}.   These are important in the theory of networks \cite{Fong2015,Fong2016}.

Sometimes it is inconvenient to work with isomorphism classes of structured cospans.  For example, in an open Petri net we can refer to a particular place or transition; in an isomorphism class of open Petri nets we cannot.   To use actual structured cospans as morphisms we need a higher categorical structure, because composing them is associative only up to isomorphism.  Indeed, in Corollary \ref{cor:LCsp(X) bicategory} we show that for any functor $L \maps \A \to \X$, if $\X$ has pushouts there is a bicategory ${}_L \bCsp(\X)$ with:
\begin{itemize}
\item objects of $\A$ as objects,
\item structured cospans as 1-morphisms,
\item commutative diagrams
\[
\begin{tikzpicture}[scale=1.5]
\node (E) at (3,-0.5) {$L(a)$};
\node (F) at (5,-0.5) {$L(b)$};
\node (G) at (4,0) {$x$};
\node (G') at (4,-1) {$y$};

\path[->,font=\scriptsize,>=angle 90]
(F) edge node[above]{$$} (G)
(G) edge node[left]{$f$} (G')
(E) edge node[above]{$$} (G)
(E) edge node[above]{$$} (G')
(F) edge node[below]{$$} (G');
\end{tikzpicture}
\]
as 2-morphisms.
\end{itemize}

In Corollary \ref{cor:LCsp(X) symmetric bicategory} we show that the bicategory $_L \bCsp(\X)$ is symmetric monoidal when $\A$ and $\X$ have finite colimits and $L$ preserves them.    
However, the coherence laws for a symmetric monoidal bicategory are rather complicated \cite{Stay}.   As noted by Ehresmann \cite{Ehresmann}, and then by Grandis and Par\'e \cite{GP1,GP2}, double categories are sometimes more convenient than bicategories.   This is especially true in the symmetric monoidal case \cite{HS, Shul2010}.  Thus we show in Theorem \ref{thm:LCsp(X)} that for any functor $L \maps \A \to \X$, if $\X$ has pushouts there is a double category $_L \lCsp(\X)$ with:
\begin{itemize}
\item objects of $\A$ as objects,
\item morphisms of $\A$ as vertical 1-morphisms,
\item structured cospans as horizontal 1-cells,
\item commutative diagrams
\[
\begin{tikzpicture}[scale=1.5]
\node (E) at (3,0) {$L(a)$};
\node (F) at (5,0) {$L(b)$};
\node (G) at (4,0) {$x$};
\node (E') at (3,-1) {$L(a')$};
\node (F') at (5,-1) {$L(b')$};
\node (G') at (4,-1) {$x'$};
\path[->,font=\scriptsize,>=angle 90]
(F) edge node[above]{$o$} (G)
(E) edge node[left]{$L(\alpha)$} (E')
(F) edge node[right]{$L(\beta)$} (F')
(G) edge node[left]{$f$} (G')
(E) edge node[above]{$i$} (G)
(E') edge node[below]{$i'$} (G')
(F') edge node[below]{$o'$} (G');
\end{tikzpicture}
\]
as 2-morphisms.
\end{itemize}
Note that vertical composition in this double category is strictly associative, while horizontal composition is not.  In Theorem \ref{thm:LCsp(X) symmetric} we show that  $_L \lCsp(\X)$ is a symmetric monoidal double category when $\A$ and $\X$ have finite colimits and $L$ preserves them.  Using Shulman's work \cite{Shul2010}, we conclude in Corollary \ref{cor:LCsp(X) symmetric bicategory} that the bicategory $_L \bCsp(\X)$ is a symmetric monoidal bicategory under the same conditions.

The reader familiar with decorated cospans may wonder why we need structured
cospans.  Recall that Fong \cite{Fong2015} constructed a category of `decorated cospans' $F\Cospan$ from any category $\A$ with finite colimits together with a symmetric lax monoidal functor $F \maps (\A,+) \to (\Set,\times)$.   The objects of $F\Cospan$ are those of $\A$, while the morphisms are equivalence classes of $F$-decorated cospans.   Here an \define{$F$-decorated cospan} is a pair
\[
\begin{tikzpicture}[scale=1.5]
\node (A) at (0,0) {$(a$};
\node (B) at (1,0) {$s$};
\node (C) at (2,0) {$b$,};
\node (D) at (2.8,0) {$d \in F(s))$.};
\path[->,font=\scriptsize,>=angle 90]
(A) edge node[above]{$i$} (B)
(C)edge node[above]{$o$}(B);
\end{tikzpicture}
\]
The element $d$, called the \define{decoration}, serves as a way to equip the apex $s$ with extra structure.   The above decorated cospan is equivalent to
\[
\begin{tikzpicture}[scale=1.5]
\node (A) at (0,0) {$(a$};
\node (B) at (1,0) {$s'$};
\node (C) at (2,0) {$b$,};
\node (D) at (2.8,0) {$d' \in F(s'))$};
\path[->,font=\scriptsize,>=angle 90]
(A) edge node[above]{$i'$} (B)
(C)edge node[above]{$o'$}(B);
\end{tikzpicture}
\]
iff there an isomorphism $f \maps s \to s'$ in $\A$ making this diagram commute:
\[
\begin{tikzpicture}[scale=1.5]
\node (E) at (3,-0.5) {$a$};
\node (F) at (5,-0.5) {$b$};
\node (G) at (4,0) {$s$};
\node (G') at (4,-1) {$s'$};
\path[->,font=\scriptsize,>=angle 90]
(F) edge node[above]{$o$} (G)
(G) edge node[left]{$f$} (G')
(E) edge node[above]{$i$} (G)
(E) edge node[below]{$i'$} (G')
(F) edge node[below]{$o'$} (G');
\end{tikzpicture}
\]
and such that $F(f)(d) = d'$.

Both decorated and structured cospans are ways to describe
a cospan whose apex is equipped with extra structure.   Since the theory of decorated
cospans is already well-developed, what is the point of another formalism?   One reason is that structured cospans are a bit simpler: instead of a symmetric lax monoidal functor $F \maps \A \to \Set$ assigning to each object of $\A$ the set of possible structures we can put on it, we can simply use a left adjoint $L$ from $\A$ to any category $\X$.  Another reason is that
structured cospans solve some problems that prevent decorated cospans from being applied as
originally intended, and indeed led to errors in a number of published papers.   
We discuss these problems, and how structured cospans get around them, in Section \ref{sec:decorated}.  In Section \ref{sec:applications} we study applications of structured cospans to electrical circuits, Petri nets and chemical reaction networks.  

\subsection*{Conventions}

In this paper, `double category' means `pseudo double category', as in Definition \ref{defn:double_category}.   Following Shulman \cite{Shul2010}, vertical composition 
in our double categories is strictly associative, while horizontal composition need not be.
We use sans-serif font like $\C$ for categories, boldface like $\B$ for bicategories or
2-categories, and blackboard bold like $\lD$ for double categories.   We also use blackboard
bold for weak category objects in any 2-category.  For double categories with names having more than one letter, like $\lCsp(\X)$, only the first letter is in blackboard bold.  A double category $\lD$ has a category of objects and a category of arrows, and we call these $\lD_0$ and $\lD_1$ despite the fact that they are categories.   

\subsection*{Acknowledgements}

The authors would like to thank Christina Vasilakopoulou for the clever idea of replacing the category of objects of some double category by some other category.   We would also like to thank Marco Grandis and Robert Par\'e for pointing out the importance of double categories with double colimits, and Joachim Kock and Mike Shulman for catching errors.


\section{Structured cospans}
\label{sec:structured}

Given a functor $L \maps \A \to \X$, a \define{structured cospan} is a cospan in $\X$ whose feet come from a pair of objects in $\A$:
\[
\begin{tikzpicture}[scale=1.2]
\node (A) at (0,0) {$L(a)$};
\node (B) at (1,1) {$x$};
\node (C) at (2,0) {$L(b).$};
\path[->,font=\scriptsize,>=angle 90]
(A) edge node[above]{$$} (B)
(C)edge node[above]{$$}(B);
\end{tikzpicture}
\]
When $L$ has a right adjoint $R \maps \X \to \A$ we can also think of this as a cospan in $\A$,
\[
\begin{tikzpicture}[scale=1.2]
\node (A) at (0,0) {$a$};
\node (B) at (1,1) {$R(x)$};
\node (C) at (2,0) {$b$,};
\path[->,font=\scriptsize,>=angle 90]
(A) edge node[above]{$$} (B)
(C)edge node[above]{$$}(B);
\end{tikzpicture}
\]
where the apex is equipped with extra structure, namely an object $x \in \X$ that it comes from.    However, treating structured cospans as living in $\X$ is technically more convenient, since then we only need $\X$ to have pushouts to compose them.
In Theorem \ref{thm:LCsp(X)} we show that when $\X$ has pushouts, structured cospans are the horizontal 1-cells of a double category ${}_L \lCsp(\X)$.   To prove this we begin by recalling the double category of cospans in $\X$.  For the definition of double category see Appendix \ref{appendix}.

\begin{lem} \label{lem:dubcsp}
Given a category $\X$ with chosen pushouts, there is a double category $\lCsp(\X)$ in which:
\begin{itemize}
\item an object is an object of $\X$,
\item a vertical 1-morphism is a morphism of $\X$,
\item a horizontal 1-cell from $x_1$ to $x_2$ is a cospan in $\X$:
\[
\begin{tikzpicture}[scale=1.5]
\node (A) at (0,0) {$x_1$};
\node (B) at (1,0) {$y$};
\node (C) at (2,0) {$x_2$};
\path[->,font=\scriptsize,>=angle 90]
(A) edge node[above]{$i$} (B)
(C)edge node[above]{$o$}(B);
\end{tikzpicture}
\]
\item a 2-morphism is a commutative diagram in $\X$ of this form:
\[
\begin{tikzpicture}[scale=1.5]
\node (E) at (3,0) {$x_1$};
\node (G) at (4,0) {$y$};
\node (F) at (5,0) {$x_2$};
\node (E') at (3,-1) {$x_1'$};
\node (G') at (4,-1) {$y'$};
\node (F') at (5,-1) {$x_2'$,};
\path[->,font=\scriptsize,>=angle 90]
(E) edge node[left]{$f_1$} (E')
(F) edge node[right]{$f_2$} (F')
(G) edge node[left]{$g$} (G')
(E) edge node[above]{$i$} (G)
(F) edge node[above]{$o$} (G)
(E') edge node[below]{$i'$} (G')
(F') edge node[below]{$o'$} (G');
\end{tikzpicture}
\]
\item composition of vertical 1-morphisms is composition in $\X$,
\item composition of horizontal 1-cells is done using the chosen pushouts
in $\X$:
\[
\begin{tikzpicture}[scale=1.5]
\node (A) at (0,0) {$x_1$};
\node (B) at (1,0.75) {$y$};
\node (C) at (2,0) {$x_2$};
\node (D) at (3,0.75) {$z$};
\node (E) at (4,0) {$x_3$};
\node (F) at (2,1.5) {$y+_{x_2} z$};
\path[->,font=\scriptsize,>=angle 90]
(A) edge node[above left=-1mm]{$i_1$} (B)
(C) edge node[above right=-1mm]{$o_1$} (B)
(C) edge node [above left=-1mm] {$i_2$} (D)
(E) edge node [above right=-1mm] {$o_2$} (D)
(B) edge node [above] {$j_y$} (F)
(D) edge node [above] {$j_z$} (F);
\end{tikzpicture}
\]
where $j_y$ and $j_z$ are the canonical morphisms from $y$ and $z$ to the pushout object, 
\item the horizontal composite of two 2-morphisms:
\[
\begin{tikzpicture}[scale=1.5]
\node (D) at (-3,1) {$x_1$};
\node (E) at (-2,1) {$y$};
\node (F) at (-1,1) {$x_2$};
\node (A) at (-3,0) {$x_1'$};
\node (G) at (-2,0) {$y'$};
\node (B) at (-1,0) {$x_2'$};

\node (D') at (0.5,1) {$x_2$};
\node (E') at (1.5,1) {$z$};
\node (F') at (2.5,1) {$x_3$};
\node (C) at (0.5,0) {$x_2'$};
\node (G') at (1.5,0) {$z'$};
\node (H) at (2.5,0) {$x_3'$};

\path[->,font=\scriptsize,>=angle 90]
(D) edge node[above] {$i_1$}(E)
(A) edge node[above] {$i'_1$} (G)
(B) edge node [above]{$o'_1$} (G)
(F) edge node [above]{$o_1$}(E)
(D) edge node [left]{$f_1$}(A)
(F) edge node [right]{$f_2$}(B)
(E) edge node[left] {$g$}(G)
(D') edge node [above]{$i_2$}(E')
(F') edge node [above]{$o_2$}(E')
(D') edge node [left]{$f_2$}(C)
(C) edge node [above]{$i'_2$} (G')
(H) edge node [above]{$o'_2$} (G')
(F') edge node[right] {$f_3$}(H)
(E') edge node[left] {$h$}(G');
\end{tikzpicture}
\]
is given by
\[
\begin{tikzpicture}[scale=1.5]
\node (E) at (3,0) {$x_1$};
\node (G) at (4.5,0) {$y+_{x_2} z$};
\node (F) at (6,0) {$x_3$};
\node (E') at (3,-1) {$x_1'$};
\node (G') at (4.5,-1) {$y'+_{x_2'}z'$};
\node (F') at (6,-1) {$x_3'$.};
\path[->,font=\scriptsize,>=angle 90]
(E) edge node[left]{$f_1$} (E')
(F) edge node[right]{$f_3$} (F')
(G) edge node[left]{$g+_{f_2} h$} (G')
(E) edge node[above]{$j_y i_1$} (G)
(F) edge node[above]{$j_z o_2$} (G)
(E') edge node[below]{$j_{y'} i_1'$} (G')
(F') edge node[below]{$j_{z'} o_2'$} (G');
\end{tikzpicture}
\]
\item the vertical composite of two 2-morphisms:
\[
\begin{tikzpicture}[scale=1.5]
\node (E) at (3,0) {$x_1$};
\node (G) at (4,0) {$y$};
\node (F) at (5,0) {$x_2$};
\node (E') at (3,-1) {$x_1'$};
\node (G') at (4,-1) {$y'$};
\node (F') at (5,-1) {$x_2'$};
\path[->,font=\scriptsize,>=angle 90]
(E) edge node[left]{$f_1$} (E')
(F) edge node[right]{$f_2$} (F')
(G) edge node[left]{$g$} (G')
(E) edge node[above]{$i$} (G)
(F) edge node[above]{$o$} (G)
(E') edge node[below]{$i'$} (G')
(F') edge node[below]{$o'$} (G');
\end{tikzpicture}
\]
\[
\begin{tikzpicture}[scale=1.5]
\node (E) at (3,0) {$x_1'$};
\node (G) at (4,0) {$y'$};
\node (F) at (5,0) {$x_2'$};
\node (E') at (3,-1) {$x_1''$};
\node (G') at (4,-1) {$y''$};
\node (F') at (5,-1) {$x_2''$};
\path[->,font=\scriptsize,>=angle 90]
(E) edge node[left]{$f_1'$} (E')
(F) edge node[right]{$f_2'$} (F')
(G) edge node[left]{$g'$} (G')
(E) edge node[above]{$i'$} (G)
(F) edge node[above]{$o'$} (G)
(E') edge node[below]{$i''$} (G')
(F') edge node[below]{$o''$} (G');
\end{tikzpicture}
\]
is given by
\[
\begin{tikzpicture}[scale=1.5]
\node (E) at (3,0) {$x_1$};
\node (G) at (4,0) {$y$};
\node (F) at (5,0) {$x_2$};
\node (E') at (3,-1) {$x_1''$};
\node (G') at (4,-1) {$y''$};
\node (F') at (5,-1) {$x_2''$};
\path[->,font=\scriptsize,>=angle 90]
(E) edge node[left]{$f_1' f_1$} (E')
(F) edge node[right]{$f_2' f_2$} (F')
(G) edge node[left]{$g' g$} (G')
(E) edge node[above]{$i$} (G)
(F) edge node[above]{$o$} (G)
(E') edge node[below]{$i''$} (G')
(F') edge node[below]{$o''$} (G');
\end{tikzpicture}
\]
\item the associator and unitors are defined using the universal property of pushouts.
\end{itemize}
\end{lem}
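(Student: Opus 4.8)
The plan is to exhibit $\lCsp(\X)$ as a pseudo double category in the sense of Definition \ref{defn:double_category}, that is, as a weak category object in $\Cat$; this construction goes back to Grandis and Par\'e \cite{GP1,GP2}, and nothing here is deep, but it must be assembled carefully. First I would take the category of objects to be $\lD_0 = \X$ itself. Next I would build the category of arrows $\lD_1$: its objects are cospans $x_1 \xrightarrow{i} y \xleftarrow{o} x_2$ in $\X$, and its morphisms are the commuting diagrams displayed in the statement; composition of such diagrams is performed componentwise in $\X$ (compose the $f_1$'s, the $g$'s, and the $f_2$'s), and identities are triples of identity morphisms, so $\lD_1$ is a category essentially because $\X$ is. The source and target functors $S, T \maps \lD_1 \to \lD_0$ send a cospan to its left and right feet and a 2-morphism to $f_1$ and $f_2$ respectively, and functoriality is immediate. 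The external identity functor $U \maps \lD_0 \to \lD_1$ sends $x$ to the cospan $x \xrightarrow{\id_x} x \xleftarrow{\id_x} x$ and a morphism $f$ to the 2-morphism whose three vertical arrows all equal $f$; this is clearly functorial.

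The substantive ingredient is horizontal composition as a functor on the pullback category $\lD_1 \times_{\lD_0} \lD_1$. On objects it sends a composable pair of cospans to the cospan through the chosen pushout, as drawn in the statement. On morphisms it sends the displayed pair of 2-morphisms to the 2-morphism whose middle arrow $g +_{f_2} h$ is the unique map out of $y +_{x_2} z$ induced by $j_{y'} g$ and $j_{z'} h$; these two maps agree after precomposition with the legs into the pushout precisely because the two given squares commute. I would then check that this assignment is a functor: it preserves identities and composites because in each case the relevant map out of a chosen pushout is forced by the universal property, hence must equal the evident candidate. One also checks $S(M \odot N) = S(M)$ and $T(M \odot N) = T(N)$ on the nose, as required for a category object.

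Next I would supply the associator and unitors as natural transformations. Given composable cospans $M, N, P$ with feet $x_1, \dots, x_4$, both $(M \odot N) \odot P$ and $M \odot (N \odot P)$ have as apex a chosen iterated pushout, and there is a unique isomorphism between these apexes compatible with all the coprojections and hence with the cospan legs; this is the component of the associator, and it is a globular 2-morphism, i.e. an identity on $x_1$ and $x_4$. Naturality in $M, N, P$, and the fact that these components are isomorphisms with inverses again induced by the universal property, both follow from the uniqueness of maps out of pushouts. Similarly, $U(x_1) \odot M$ has apex $x_1 +_{x_1} y$, which the universal property identifies uniquely and isomorphically with $y$ over the legs, giving the left unitor; the right unitor is analogous.

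Finally I would verify the two coherence axioms: the pentagon identity for the associator and the triangle identity relating the associator to the two unitors. In every case both sides of the required equation are globular 2-morphisms whose middle component is a morphism between chosen (iterated) pushouts that is uniquely determined by compatibility with the coprojections, so the two sides coincide. The main obstacle is bookkeeping rather than mathematics: one must set up the pushout comparison maps consistently and invoke their uniqueness throughout. The one genuinely delicate point is functoriality of horizontal composition on 2-morphisms, where ``first form the pushouts, then take the induced map'' must be reconciled with ``first take the maps, then form the pushout'' — but this too is just another application of the universal property, since both constructions yield a map $y +_{x_2} z \to y'' +_{x_2''} z''$ compatible with the coprojections.
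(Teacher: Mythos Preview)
Your proposal is correct and spells out in full the verification that the paper itself omits: the paper's proof consists of the single sentence ``This is well known \cite{Cour,Nie}.'' Your outline---building $\lD_0$ and $\lD_1$, checking functoriality of $\odot$ via the universal property of pushouts, and deducing the coherence axioms from uniqueness of induced maps---is exactly the standard argument those references contain, so there is no meaningful difference in approach, only in level of detail.
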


\begin{proof}
This is well known \cite{Cour,Nie}. 
\end{proof}

We expect that a different choice of pushouts in $\X$ will give an equivalent double category
$\lCsp(\X)$, since pushouts are unique up to canonical isomorphism.

To build structured cospan double categories, we use a method we learned from Christina Vasilakopoulou for taking a double category $\lX$ and replacing its objects and vertical 1-morphisms with the objects and morphisms of some category $\A$.   In Appendix \ref{appendix}, we recall that any double category $\lX$ has a category $\lX_0$ called its \define{category of objects}, whose objects are those of $\lX$ and whose morphisms are the vertical 1-morphisms of $\lX$.   We can replace the category of objects by $\A$ using a functor $L \maps \A \to \lX_0$. 

\begin{lem} \label{trick1}
Given a double category $\lX$, a category $\A$ and a functor $L \maps \A \to \lX_0$, there is a double category $_{L} \lX$ in which:
\begin{itemize}
\item an object is an object of $\A$,
\item a vertical 1-morphism is a morphism of $\A$, 
\item a horizontal 1-cell from $a$ to $b$ is a horizontal 1-cell $L(a) \xrightarrow{M} L(b)$ of $\lX$, 
\item a 2-morphism is a 2-morphism in $\lX$ of the form:
\[
\begin{tikzpicture}[scale=1.5]
\node (A) at (0,0) {$L(a)$};
\node (C) at (1,0) {$L(b)$};
\node (A') at (0,-1) {$L(a')$};
\node (C') at (1,-1) {$L(b')$,};
\node (B) at (0.5,-0.5) {$\Downarrow \alpha$};
\path[->,font=\scriptsize,>=angle 90]
(A) edge node[above]{$M$} (C)
(A) edge node [left]{$L(f)$} (A')
(C)edge node[right]{$L(g)$}(C')
(A')edge node [above] {$N$}(C');
\end{tikzpicture}
\]
\item composition of vertical 1-morphisms is composition in $\A$
\item composition of horizontal 1-morphisms is defined as in $\lX$,
\item vertical and horizontal composition of 2-morphisms are defined as in $\lX$,
\item the associator and unitors are defined as in $\lX$.
\end{itemize}
\end{lem}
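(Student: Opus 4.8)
The plan is to build $_{L}\lX$ by transporting all the structure of $\lX$ across the functor $L \maps \A \to \lX_0$, and to verify the axioms of a (pseudo) double category (Definition \ref{defn:double_category}) by reducing each one to the corresponding axiom already known to hold in $\lX$. First I would construct the category of objects $({}_{L}\lX)_0$: I claim this is simply $\A$ itself, since by fiat its objects are objects of $\A$, its vertical 1-morphisms are morphisms of $\A$, and vertical composition is composition in $\A$; associativity and unit laws are thus inherited from $\A$ for free. Next I would construct the category of arrows $({}_{L}\lX)_1$: an object is a horizontal 1-cell $L(a)\xrightarrow{M}L(b)$ of $\lX$, i.e., an object of $\lX_1$ whose source and target are in the image of $L$; a morphism is a 2-morphism of $\lX$ whose vertical legs are of the form $L(f)$ and $L(g)$; composition is vertical composition of 2-morphisms in $\lX$. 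I would check $({}_{L}\lX)_1$ is a category: it is a (non-full, non-wide) subcategory of $\lX_1$, closed under composition and identities because $L$ is a functor (so $L(f)L(f')=L(ff')$ and $L(\id_a)=\id_{L(a)}$, keeping us inside the allowed class of legs), so the category axioms descend from those of $\lX_1$.

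Then I would exhibit the source, target, and identity functors $S,T\maps ({}_{L}\lX)_1\to\A$ and $U\maps \A\to({}_{L}\lX)_1$: on objects $S$ and $T$ send $L(a)\xrightarrow{M}L(b)$ to $a$ and $b$ respectively (well-defined as soon as $L$ is injective on objects, or — more carefully — we should regard a horizontal 1-cell as the \emph{pair} consisting of the objects $a,b$ of $\A$ together with a horizontal 1-cell $L(a)\to L(b)$ of $\lX$, which sidesteps any non-injectivity of $L$ and is the reading intended by the statement); $U$ sends $a$ to the horizontal unit $U_{L(a)}$ of $\lX$ at $L(a)$, re-labelled as a horizontal 1-cell from $a$ to $a$. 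Functoriality of $S,T,U$ follows from functoriality of the analogous data in $\lX$ together with functoriality of $L$. The horizontal composition functor $\odot\maps ({}_{L}\lX)_1 \times_{\A} ({}_{L}\lX)_1 \to ({}_{L}\lX)_1$ is defined exactly as in $\lX$: a composable pair $L(a)\xrightarrow{M}L(b)$, $L(b)\xrightarrow{N}L(c)$ composes to $L(a)\xrightarrow{N\odot M}L(c)$, and likewise on 2-morphisms; this is well-defined because the pullback $({}_{L}\lX)_1 \times_{\A} ({}_{L}\lX)_1$ maps into $(\lX)_1\times_{\lX_0}(\lX)_1$ via $L$, and the image of $\odot_{\lX}$ on such pairs again has legs in the image of $L$.

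Finally I would transport the associator and unitor globular isomorphisms: for horizontal 1-cells $L(a)\xrightarrow{M}L(b)\xrightarrow{N}L(c)\xrightarrow{P}L(d)$, the associator $\alpha_{P,N,M}$ and unitors $\lambda_M,\rho_M$ of $\lX$ are 2-morphisms whose vertical legs are identities $\id_{L(a)}=L(\id_a)$, hence are legal 2-morphisms of $_{L}\lX$; naturality, the pentagon, and the triangle identity are literally the same equations of 2-morphisms as in $\lX$, so they hold. I expect the main — though still routine — obstacle to be bookkeeping around the source/target functors when $L$ fails to be injective on objects or morphisms: one must consistently treat a horizontal 1-cell of $_{L}\lX$ as carrying the data of its $\A$-objects $a,b$ (not merely the $\lX$-object $L(a)$), so that $S$ and $T$ are genuinely functors to $\A$ and the fibered product used for horizontal composition is the right one. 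Once that convention is fixed, every axiom is inherited from $\lX$, and the verification is mechanical.
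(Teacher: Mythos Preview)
Your proposal is correct and follows exactly the approach the paper takes: the paper's proof is the single sentence ``It is easy to check the double category axioms using the fact that $\lX$ is a double category and $L$ is a functor,'' and what you have written is precisely the unpacking of that sentence. Your care in treating a horizontal 1-cell of ${}_L\lX$ as the \emph{data} of $a,b\in\A$ together with $M\maps L(a)\to L(b)$, rather than merely as $M$, is the right fix for the non-injectivity issue and is implicit in the paper's formulation.
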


\begin{proof}
It is easy to check the double category axioms using the fact that $\lX$ is a double category and $L$ is a functor.   \end{proof}

Putting the above lemmas together, we obtain our double category of structured cospans.  We describe it quite explicitly for reference purposes:

\begin{thm} 
\label{thm:LCsp(X)}
Let $L \maps \A \to \X$ be a functor where $\X$ is a category with chosen pushouts. Then there is a double category $_L \lCsp(\X)$ in which:
\begin{itemize}
\item an object is an object of $\A$,
\item a vertical 1-morphism is a morphism of $\A$,
\item a horizontal 1-cell from $a$ to $b$ is a diagram in $\X$ of this form:
\[
\begin{tikzpicture}[scale=1.5]
\node (A) at (0,0) {$L(a)$};
\node (B) at (1,0) {$x$};
\node (C) at (2,0) {$L(b)$};
\path[->,font=\scriptsize,>=angle 90]
(A) edge node[above]{$i$} (B)
(C)edge node[above]{$o$}(B);
\end{tikzpicture}
\]
\item a 2-morphism is a commutative diagram in $\X$ of this form:
\[
\begin{tikzpicture}[scale=1.5]
\node (E) at (3,0) {$L(a)$};
\node (F) at (5,0) {$L(b)$};
\node (G) at (4,0) {$x$};
\node (E') at (3,-1) {$L(a')$};
\node (F') at (5,-1) {$L(b')$};
\node (G') at (4,-1) {$x'$};
\path[->,font=\scriptsize,>=angle 90]
(F) edge node[above]{$o$} (G)
(E) edge node[left]{$L(\alpha)$} (E')
(F) edge node[right]{$L(\beta)$} (F')
(G) edge node[left]{$f$} (G')
(E) edge node[above]{$i$} (G)
(E') edge node[below]{$i'$} (G')
(F') edge node[below]{$o'$} (G');
\end{tikzpicture}
\]
\item composition of horizontal 1-cells is done using the chosen pushouts in $\X$: 
\[
\begin{tikzpicture}[scale=1.5]
\node (A) at (0,0) {$L(a)$};
\node (B) at (1,0.75) {$x$};
\node (C) at (2,0) {$L(b)$};
\node (D) at (3,0.75) {$y$};
\node (E) at (4,0) {$L(c)$};
\node (F) at (2,1.5) {$x +_{L(b)} y$};
\path[->,font=\scriptsize,>=angle 90]
(A) edge node [above left=-1mm]{$i_1$} (B)
(C) edge node [above right=-1mm]{$o_1$} (B)
(C) edge node [above left=-1mm] {$i_2$} (D)
(E) edge node [above right=-1mm] {$o_2$} (D)
(B) edge node [above left=-1mm] {$j_x$} (F)
(D) edge node [above right=-1mm] {$j_y$} (F);
\end{tikzpicture}
\]
where $j_x$ and $j_y$ are the canonical morphisms from $x$ and $y$ to the pushout object, 
\item identity horizontal 1-cells are diagrams of this form:
\[
\begin{tikzpicture}[scale=1.5]
\node (A) at (0,0) {$L(a)$};
\node (B) at (1,0) {$L(a)$};
\node (C) at (2,0) {$L(a)$};
\path[->,font=\scriptsize,>=angle 90]
(A) edge node[above]{$1$} (B)
(C)edge node[above]{$1$}(B);
\end{tikzpicture}
\]
\item 
the horizontal composite of two 2-morphisms:
\[
\begin{tikzpicture}[scale=1.5]
\node (D) at (-3,1) {$L(a)$};
\node (E) at (-2,1) {$x$};
\node (F) at (-1,1) {$L(b)$};
\node (A) at (-3,0) {$L(a')$};
\node (G) at (-2,0) {$x'$};
\node (B) at (-1,0) {$L(b')$};

\node (D') at (0.5,1) {$L(b)$};
\node (E') at (1.5,1) {$y$};
\node (F') at (2.5,1) {$L(c)$};
\node (C) at (0.5,0) {$L(b')$};
\node (G') at (1.5,0) {$y'$};
\node (H) at (2.5,0) {$L(c')$};
\path[->,font=\scriptsize,>=angle 90]
(D) edge node[above] {$i_1$}(E)
(A) edge node[below] {$i'_1$} (G)
(B) edge node [below]{$o'_1$} (G)
(F) edge node [above]{$o_1$}(E)
(D) edge node [left]{$L(\alpha)$}(A)
(F) edge node [right]{$L(\beta)$}(B)
(E) edge node[left] {$f$}(G)
(D') edge node [above]{$i_2$}(E')
(F') edge node [above]{$o_2$}(E')
(D') edge node [left]{$L(\beta)$}(C)
(C) edge node [below]{$i'_2$} (G')
(H) edge node [below]{$o'_2$} (G')
(F') edge node[right] {$L(\gamma)$}(H)
(E') edge node[left] {$g$}(G');
\end{tikzpicture}
\]
is given by
\[
\begin{tikzpicture}[scale=1.5]
\node (E) at (3,0) {$L(a)$};
\node (G) at (4.5,0) {$x+_{L(b)} y$};
\node (F) at (6,0) {$L(c)$};
\node (E') at (3,-1) {$L(a')$};
\node (G') at (4.5,-1) {$x'+_{L(b')} y'$};
\node (F') at (6,-1) {$L(c')$};
\path[->,font=\scriptsize,>=angle 90]
(E) edge node[left]{$L(\alpha)$} (E')
(F) edge node[right]{$L(\gamma)$} (F')
(G) edge node[left]{$f +_{L(\beta)} g$} (G')
(E) edge node[above]{$j_x i_1$} (G)
(F) edge node[above]{$j_y o_2$} (G)
(E') edge node[below]{$j_{x'} i_1'$} (G')
(F') edge node[below]{$j_{y'} o_2'$} (G');
\end{tikzpicture}
\]
\item the identities for horizontal composition of 2-morphisms are diagrams of this form:
\[
\begin{tikzpicture}[scale=1.5]
\node (E) at (3,0) {$L(a)$};
\node (G) at (4,0) {$L(a)$};
\node (F) at (5,0) {$L(a)$};
\node (E') at (3,-1) {$L(a')$};
\node (G') at (4,-1) {$L(a')$};
\node (F') at (5,-1) {$L(a')$};
\path[->,font=\scriptsize,>=angle 90]
(E) edge node[left]{$L(\alpha)$} (E')
(F) edge node[right]{$L(\alpha)$} (F')
(G) edge node[left]{$L(\alpha)$} (G')
(E) edge node[above]{$1$} (G)
(F) edge node[above]{$1$} (G)
(E') edge node[below]{$1$} (G')
(F') edge node[below]{$1$} (G');
\end{tikzpicture}
\]
\item the vertical composite of two 2-morphisms:
\[
\begin{tikzpicture}[scale=1.5]
\node (E) at (3,0) {$L(a)$};
\node (G) at (4,0) {$y$};
\node (F) at (5,0) {$L(b)$};
\node (E') at (3,-1) {$L(a')$};
\node (G') at (4,-1) {$y'$};
\node (F') at (5,-1) {$L(b')$};
\path[->,font=\scriptsize,>=angle 90]
(E) edge node[left]{$L(\alpha)$} (E')
(F) edge node[right]{$L(\beta)$} (F')
(G) edge node[left]{$f$} (G')
(E) edge node[above]{$i$} (G)
(F) edge node[above]{$o$} (G)
(E') edge node[below]{$i'$} (G')
(F') edge node[below]{$o'$} (G');
\end{tikzpicture}
\]
\[
\begin{tikzpicture}[scale=1.5]
\node (E) at (3,0) {$L(a')$};
\node (G) at (4,0) {$y'$};
\node (F) at (5,0) {$L(b')$};
\node (E') at (3,-1) {$L(a'')$};
\node (G') at (4,-1) {$y''$};
\node (F') at (5,-1) {$L(b'')$};
\path[->,font=\scriptsize,>=angle 90]
(E) edge node[left]{$L(\alpha')$} (E')
(F) edge node[right]{$L(\beta')$} (F')
(G) edge node[left]{$f'$} (G')
(E) edge node[above]{$i'$} (G)
(F) edge node[above]{$o'$} (G)
(E') edge node[below]{$i''$} (G')
(F') edge node[below]{$o''$} (G');
\end{tikzpicture}
\]
is given by
\[
\begin{tikzpicture}[scale=1.5]
\node (E) at (3,0) {$L(a)$};
\node (G) at (4,0) {$y$};
\node (F) at (5,0) {$L(b)$};
\node (E') at (3,-1) {$L(a'')$};
\node (G') at (4,-1) {$y''$};
\node (F') at (5,-1) {$L(b'')$};
\path[->,font=\scriptsize,>=angle 90]
(E) edge node[left]{$L(\alpha'\alpha)$} (E')
(F) edge node[right]{$L(\beta'\beta)$} (F')
(G) edge node[left]{$f' f$} (G')
(E) edge node[above]{$i$} (G)
(F) edge node[above]{$o$} (G)
(E') edge node[below]{$i''$} (G')
(F') edge node[below]{$o''$} (G');
\end{tikzpicture}
\]
\item the associator and unitors are defined using the universal property of pushouts.
\end{itemize}
\end{thm}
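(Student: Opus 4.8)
The plan is to obtain $_L\lCsp(\X)$ by feeding the double category of cospans from Lemma~\ref{lem:dubcsp} into the object-replacement construction of Lemma~\ref{trick1}, and then to unwind the definitions and check that the resulting double category agrees with the explicit description in the statement.

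First I would observe that for the double category $\lCsp(\X)$ of Lemma~\ref{lem:dubcsp}, the category of objects is $\lCsp(\X)_0 = \X$: its objects are the objects of $\X$, its vertical $1$-morphisms are the morphisms of $\X$, and composition of vertical $1$-morphisms is composition in $\X$. Hence the given functor $L \maps \A \to \X$ is precisely a functor $L \maps \A \to \lCsp(\X)_0$, so Lemma~\ref{trick1} applies with $\lX = \lCsp(\X)$, producing a double category which we name $_L\lCsp(\X)$. The double category axioms then hold automatically, since they hold in $\lCsp(\X)$ and are preserved by the construction of Lemma~\ref{trick1}.

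Next I would verify, item by item, that the double category produced this way is the one described in the theorem. By Lemma~\ref{trick1}: an object is an object of $\A$; a vertical $1$-morphism is a morphism of $\A$; a horizontal $1$-cell from $a$ to $b$ is a horizontal $1$-cell $L(a)\to L(b)$ of $\lCsp(\X)$, which by Lemma~\ref{lem:dubcsp} is exactly a cospan $L(a)\xrightarrow{i} x \xleftarrow{o} L(b)$ in $\X$; and a $2$-morphism with vertical source $\alpha \maps a \to a'$ and vertical target $\beta \maps b \to b'$ is a $2$-morphism in $\lCsp(\X)$ whose left and right vertical edges are $L(\alpha)$ and $L(\beta)$, i.e.\ a commutative diagram in $\X$ of the displayed shape. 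The remaining structure---horizontal composition of $1$-cells via the chosen pushouts, identity horizontal $1$-cells, horizontal and vertical composition of $2$-morphisms, the identities for horizontal composition of $2$-morphisms, and the associator and unitors---is by Lemma~\ref{trick1} literally the corresponding structure of $\lCsp(\X)$, restricted to the objects in the image of $L$. Reading off the formulas from Lemma~\ref{lem:dubcsp} and substituting $L(a), L(b), L(c)$ for the feet $x_1, x_2, x_3$ and $L(\alpha), L(\beta), L(\gamma)$ for the vertical legs $f_1, f_2, f_3$ yields exactly the diagrams displayed in the theorem; in particular the pushout $x +_{L(b)} y$ and the induced map $f +_{L(\beta)} g$ are precisely the ones appearing in Lemma~\ref{lem:dubcsp}.

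There is no genuine obstacle here: all the content lives in Lemmas~\ref{lem:dubcsp} and~\ref{trick1}, and this argument is pure bookkeeping. The only points requiring a moment's care are (i) identifying $\lCsp(\X)_0$ with $\X$ so that Lemma~\ref{trick1} is applicable, and (ii) confirming that, since composition of vertical $1$-morphisms in $_L\lCsp(\X)$ is composition in $\A$ while the vertical edges of its $2$-morphisms are images under $L$, the pasting used in horizontal and vertical composition of $2$-morphisms goes through because $L$ is a functor---so that, for instance, $L(\alpha')L(\alpha) = L(\alpha'\alpha)$ and $L(\beta')L(\beta) = L(\beta'\beta)$ as recorded in the vertical-composite diagram.
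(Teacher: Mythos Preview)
Your proposal is correct and follows exactly the paper's approach: the paper's proof is the single sentence ``We apply Lemma~\ref{trick1} to the double category $\lCsp(\X)$ of Lemma~\ref{lem:dubcsp},'' and your argument is simply a careful unwinding of that application.
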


\begin{proof}
We apply Lemma \ref{trick1} to the double category $\lCsp(\X)$ of Lemma \ref{lem:dubcsp}.
\end{proof}

 From the double category ${}_L \lCsp(\X)$ we can extract a bicategory ${}_L \bCsp(\X)$ and then a category ${}_L \Csp(\X)$.  In many applications all we need is a bicategory or even a mere category of structured cospans, so the reader should not get the misimpression that working with structured cospans \emph{requires} using double categories.  We begin with the bicategory:

\begin{cor} \label{cor:LCsp(X) bicategory}
Let $L \maps \A \to \X$ be a functor where $\X$ is a category with chosen pushouts. Then there is a bicategory $_L \bCsp(\X)$ in which:
\begin{itemize}
\item an object is an object of $\A$,
\item a morphism from $a$ to $b$ is a diagram in $\X$ of this form:
\[
\begin{tikzpicture}[scale=1.5]
\node (A) at (0,0) {$L(a)$};
\node (B) at (1,0) {$x$};
\node (C) at (2,0) {$L(b)$};
\path[->,font=\scriptsize,>=angle 90]
(A) edge node[above]{$i$} (B)
(C)edge node[above]{$o$}(B);
\end{tikzpicture}
\]
\item a 2-morphism is a commutative diagram in $\X$ of this form:
\[
\begin{tikzpicture}[scale=1.5]
\node (E) at (3,-0.5) {$L(a)$};
\node (F) at (5,-0.5) {$L(b)$};
\node (G) at (4,0) {$x$};
\node (G') at (4,-1) {$x'$};
\path[->,font=\scriptsize,>=angle 90]
(F) edge node[above]{$o$} (G)
(G) edge node[left]{$f$} (G')
(E) edge node[above]{$i$} (G)
(E) edge node[below]{$i'$} (G')
(F) edge node[below]{$o'$} (G');
\end{tikzpicture}
\]
\item composition of morphisms is done using the chosen pushouts in $\X$, 
\item identity morphisms are of this form:
\[
\begin{tikzpicture}[scale=1.5]
\node (A) at (0,0) {$L(a)$};
\node (B) at (1,0) {$L(a)$};
\node (C) at (2,0) {$L(a)$};
\path[->,font=\scriptsize,>=angle 90]
(A) edge node[above]{$1$} (B)
(C)edge node[above]{$1$}(B);
\end{tikzpicture}
\]
\item the horizontal composite of 2-morphisms:
\[
\begin{tikzpicture}[scale=1.5]
\node (D) at (-3,0.5) {$L(a)$};
\node (E) at (-2,1) {$x$};
\node (F) at (-1,0.5) {$L(b)$};
\node (G) at (-2,0) {$x'$};

\node (D') at (0.5,0.5) {$L(b)$};
\node (E') at (1.5,1) {$y$};
\node (F') at (2.5,0.5) {$L(c)$};
\node (G') at (1.5,0) {$y'$};
\path[->,font=\scriptsize,>=angle 90]
(D) edge node[above] {$i_1$}(E)
(D) edge node[below] {$i'_1$} (G)
(F) edge node [below]{$o'_1$} (G)
(F) edge node [above]{$o_1$}(E)
(E) edge node[left] {$f$}(G)
(D') edge node [above]{$i_2$}(E')
(F') edge node [above]{$o_2$}(E')
(D') edge node [below]{$i'_2$} (G')
(F') edge node [below]{$o'_2$} (G')
(E') edge node[left] {$g$}(G');
\end{tikzpicture}
\]
is given by
\[
\begin{tikzpicture}[scale=1.5]
\node (E) at (3,-0.5) {$L(a)$};
\node (G) at (4.5,0) {$x+_{L(b)} y$};
\node (F) at (6,-0.5) {$L(c)$};
\node (G') at (4.5,-1) {$x'+_{L(b)} y'$};
\path[->,font=\scriptsize,>=angle 90]
(G) edge node[left]{$f +_{L(1_b)} g$} (G')
(E) edge node[above]{$j_x i_1$} (G)
(F) edge node[above]{$j_y o_2$} (G)
(E) edge node[below]{$j_{x'} i_1'$} (G')
(F) edge node[below]{$j_{y'} o_2'$} (G');
\end{tikzpicture}
\]
where $j_x$ and $j_y$ are the canonical morphisms from $x$ and $y$ to the pushout object \hfill \break $x +_{L(b)} y$, and similarly for $j_{x'}$ and $j_{y'}$,
\item the vertical composite of 2-morphisms:
\[
\begin{tikzpicture}[scale=1.5]
\node (E) at (3,-0.5) {$L(a)$};
\node (G) at (4,0) {$y$};
\node (F) at (5,-0.5) {$L(b)$};
\node (G') at (4,-1) {$y'$};
\path[->,font=\scriptsize,>=angle 90]
(G) edge node[left]{$f$} (G')
(E) edge node[above]{$i$} (G)
(F) edge node[above]{$o$} (G)
(E) edge node[below]{$i'$} (G')
(F) edge node[below]{$o'$} (G');
\end{tikzpicture}
\]
\[
\begin{tikzpicture}[scale=1.5]
\node (E) at (3,-0.5) {$L(a)$};
\node (G) at (4,0) {$y'$};
\node (F) at (5,-0.5) {$L(b)$};
\node (G') at (4,-1) {$y''$};
\path[->,font=\scriptsize,>=angle 90]
(G) edge node[left]{$f'$} (G')
(E) edge node[above]{$i'$} (G)
(F) edge node[above]{$o'$} (G)
(E) edge node[below]{$i''$} (G')
(F) edge node[below]{$o''$} (G');
\end{tikzpicture}
\]
is given by
\[
\begin{tikzpicture}[scale=1.5]
\node (E) at (3,-0.5) {$L(a)$};
\node (G) at (4,0) {$y$};
\node (F) at (5,-0.5) {$L(b)$};
\node (G') at (4,-1) {$y''$};
\path[->,font=\scriptsize,>=angle 90]
(G) edge node[left]{$f' f$} (G')
(E) edge node[above]{$i$} (G)
(F) edge node[above]{$o$} (G)
(E) edge node[below]{$i''$} (G')
(F) edge node[below]{$o''$} (G');
\end{tikzpicture}
\]
\item the associator and unitors are defined using the universal property of pushouts.
\end{itemize}
\end{cor}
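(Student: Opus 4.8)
The plan is to obtain ${}_L \bCsp(\X)$ as the \emph{horizontal bicategory} (or underlying bicategory) of the double category ${}_L \lCsp(\X)$ already constructed in Theorem \ref{thm:LCsp(X)}. Recall the general construction: from any pseudo double category $\lD$ one builds a bicategory $\mathbf{H}(\lD)$ whose objects are the objects of $\lD$, whose 1-morphisms from $a$ to $b$ are the horizontal 1-cells $a \to b$, and whose 2-morphisms from a horizontal 1-cell $M$ to a horizontal 1-cell $N$ (both $a \to b$) are the \emph{globular} 2-morphisms of $\lD$, i.e.\ those whose vertical source and target 1-morphisms are the identities $1_a$ and $1_b$. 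Horizontal composition of 1- and 2-morphisms in $\mathbf{H}(\lD)$ is that of $\lD$; the composition in each hom-category $\mathbf{H}(\lD)(a,b)$ is vertical composition of 2-morphisms in $\lD$; and the associator and unitors of $\mathbf{H}(\lD)$ are the (globular) associator and unitor 2-cells of $\lD$, with the pentagon and triangle identities being instances of the corresponding coherence axioms of $\lD$. So first I would invoke this construction with $\lD = {}_L \lCsp(\X)$.

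Next I would check that the resulting data is exactly that listed in the statement. Objects and 1-morphisms coincide on the nose with those of ${}_L \lCsp(\X)$. For 2-morphisms, a 2-morphism of ${}_L \lCsp(\X)$ has vertical 1-morphisms of the form $L(\alpha)$ and $L(\beta)$, and it is globular precisely when $\alpha = 1_a$ and $\beta = 1_b$; since $L$ is a functor it preserves identities, so $L(\alpha) = 1_{L(a)}$ and $L(\beta) = 1_{L(b)}$, and the commuting square of Theorem \ref{thm:LCsp(X)} collapses to the commuting triangle-shaped diagram displayed in the corollary. Under this identification the identity 2-morphism on $L(a) \xrightarrow{i} x \xleftarrow{o} L(b)$ is the diagram with $f = 1_x$, and the composition in the hom-category is the vertical composition of globular 2-morphisms in ${}_L \lCsp(\X)$, which simply composes the middle maps to give $f' f$, matching the stated formula.

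Finally I would verify the horizontal composites agree. Composition of 1-morphisms is pushout in $\X$ in both places by construction. For the horizontal composite of two globular 2-morphisms, the double-category formula of Theorem \ref{thm:LCsp(X)} specializes, with $\alpha,\beta,\gamma$ all identities, to the map $f +_{1_{L(b)}} g$ induced on pushouts over the common foot $L(b)$ by the universal property, which is precisely the map $f +_{L(1_b)} g$ appearing in the corollary; one also checks that this composite is again globular (its vertical legs remain $1_{L(a)}$ and $1_{L(c)}$), so that $\mathbf{H}$ is well-defined on 2-cells, and that horizontal composition is functorial on hom-categories, which follows from the middle-four interchange law of the double category. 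The only mildly delicate point — and the one I would spend the most care on — is the bookkeeping: confirming that the coherence cells (associator and unitors), which in Theorem \ref{thm:LCsp(X)} are "defined using the universal property of pushouts", are themselves globular and hence really live in the bicategory, with components the canonical comparison isomorphisms between iterated pushouts. This is routine but is where a misidentification would hide; everything else — pentagon, triangle, naturality — is inherited verbatim from the double-category axioms established via Lemmas \ref{lem:dubcsp} and \ref{trick1}.
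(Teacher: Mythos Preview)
Your proposal is correct and follows essentially the same approach as the paper: invoke the general construction of the horizontal bicategory of a double category (the paper cites Shulman for this) and apply it to ${}_L \lCsp(\X)$ from Theorem \ref{thm:LCsp(X)}. Your treatment is in fact more detailed than the paper's, which simply states the general construction and concludes; your explicit checks that globular 2-morphisms correspond to the displayed diagrams and that the composition formulas specialize correctly are fine elaborations but not required.
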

 
\begin{proof} 
As noted for example by Shulman \cite{Shul2010}, any double category $\lX$ gives rise to a bicategory $\bX$ with
\begin{itemize}
\item objects given by objects of $\lX$,
\item morphisms given by horizontal 1-cells of $\lX$,
\item 2-morphisms given by \define{globular} 2-morphisms of $\lX$, meaning 2-morphisms whose source and target vertical 1-morphisms are identities,
\item composition of morphisms given by horizontal composition of horizontal 1-cells in $\lX$,
\item vertical and horizontal composition of 2-morphisms given by vertical and horizontal
composition of 2-morphisms in $\lX$.
\end{itemize}
Applying this to ${}_L \lCsp(\X)$ we obtain ${}_L \bCsp(\X)$. 
\end{proof}

\begin{cor} \label{_L Csp(X) category}
Let $L \maps \A \to \X$ be a functor where $\X$ is a category with pushouts. Then there is a category ${}_L \Csp(\X)$ in which:
\begin{itemize}
\item an object is an object of $\A$,
\item a morphism from $a$ to $b$ is an isomorphism class of diagrams in $\X$ of this form:
\[
\begin{tikzpicture}[scale=1.5]
\node (A) at (0,0) {$L(a)$};
\node (B) at (1,0) {$x$};
\node (C) at (2,0) {$L(b)$};
\path[->,font=\scriptsize,>=angle 90]
(A) edge node[above]{$i$} (B)
(C)edge node[above]{$o$}(B);
\end{tikzpicture}
\]
where
\raisebox{-0.5em}{
$\begin{tikzpicture}[scale=1]
\node (A) at (0,0) {$L(a)$};
\node (B) at (1,0) {$x$};
\node (C) at (2,0) {$L(b)$};
\path[->,font=\scriptsize,>=angle 90]
(A) edge node[above]{$i$} (B)
(C)edge node[above]{$o$}(B);
\end{tikzpicture}$
}
and 
\raisebox{-0.5em}{
$\begin{tikzpicture}[scale=1]
\node (A) at (0,0) {$L(a)$};
\node (B) at (1,0) {$x'$};
\node (C) at (2,0) {$L(b)$};
\path[->,font=\scriptsize,>=angle 90]
(A) edge node[above]{$i'$} (B)
(C)edge node[above]{$o'$}(B);
\end{tikzpicture}$
}
are isomorphic iff there is an isomorphism $f \maps x \to x'$ making this diagram commute:
\[
\begin{tikzpicture}[scale=1.5]
\node (E) at (3,-0.5) {$L(a)$};
\node (F) at (5,-0.5) {$L(b)$};
\node (G) at (4,0) {$x$};
\node (G') at (4,-1) {$x'$};
\path[->,font=\scriptsize,>=angle 90]
(F) edge node[above]{$o$} (G)
(G) edge node[left]{$f$} (G')
(E) edge node[above]{$i$} (G)
(E) edge node[below]{$i'$} (G')
(F) edge node[below]{$o'$} (G');
\end{tikzpicture}
\]
\item composition of morphisms is done using pushouts in $\X$.
\end{itemize}
\end{cor}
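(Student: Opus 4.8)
The plan is to obtain ${}_L\Csp(\X)$ by ``decategorifying'' the bicategory ${}_L\bCsp(\X)$ of Corollary \ref{cor:LCsp(X) bicategory}: in any bicategory, declaring two parallel $1$-morphisms equivalent when there is an invertible $2$-morphism between them produces a genuine category, because the associators and unitors — being invertible $2$-cells — become identities in the quotient. So first I would invoke this general principle for the bicategory ${}_L\bCsp(\X)$, and then check that the resulting equivalence relation and composition match the description in the statement.

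Concretely, I would proceed as follows. First, recall from Corollary \ref{cor:LCsp(X) bicategory} that when $\X$ has chosen pushouts there is a bicategory ${}_L\bCsp(\X)$ whose morphisms $a\to b$ are structured cospans $L(a)\to x\leftarrow L(b)$, whose $2$-morphisms are the displayed commuting diagrams, and whose horizontal and vertical composites of $2$-morphisms act on the apexes by composition of the maps $f \maps x\to x'$. Consequently a $2$-morphism is invertible in ${}_L\bCsp(\X)$ precisely when its underlying apex map $f$ is an isomorphism of $\X$: given such an $f$, the inverse $f^{-1}$ again makes the two triangles commute. Hence ``there is an invertible $2$-morphism'' between two structured cospans is exactly the isomorphism relation stated in the corollary, and it is an equivalence relation — reflexivity from the identity $2$-morphism, symmetry from $f^{-1}$, transitivity from composing apex isomorphisms.

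Next I would check that this relation is a congruence for horizontal composition, so that composition descends to isomorphism classes. If $f \maps x\to x'$ and $g \maps y\to y'$ are apex isomorphisms witnessing $M\cong M'$ and $N\cong N'$ with feet matched appropriately, the induced map of pushouts $f+_{L(b)}g \maps x+_{L(b)}y\to x'+_{L(b)}y'$ is again an isomorphism, with inverse $f^{-1}+_{L(b)}g^{-1}$ by the universal property of the pushout, and it makes the relevant triangles commute; this is exactly the horizontal composite of the two globular $2$-morphisms in ${}_L\bCsp(\X)$, and it is invertible. So the composite $N\circ M$ and $N'\circ M'$ lie in the same isomorphism class, and composition of isomorphism classes is well defined.

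Finally I would verify the category axioms. Since composition is computed by choosing \emph{some} pushout, I should note that any two choices of pushout produce canonically isomorphic apexes, hence the same isomorphism class — which is why the statement only requires $\X$ to have pushouts, not chosen ones. Associativity then follows from the associator of ${}_L\bCsp(\X)$, which is an invertible $2$-morphism $(P\circ N)\circ M\To P\circ(N\circ M)$, so the two triple composites are isomorphic and hence equal as morphisms of ${}_L\Csp(\X)$; the left and right unitors likewise show that the class of the identity structured cospan on $a$ is a two-sided unit. The only point needing a little care is precisely this passage from ``isomorphic via the coherence cells'' to ``equal in the quotient,'' together with the independence from the choice of pushouts; I do not expect any serious obstacle, only bookkeeping, since all the work was already done in constructing ${}_L\bCsp(\X)$.
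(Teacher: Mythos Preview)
Your proposal is correct and follows essentially the same approach as the paper: decategorify the bicategory ${}_L\bCsp(\X)$ by taking isomorphism classes of $1$-morphisms, and note that the result is independent of the choice of pushouts. The paper states this in two sentences and leaves the verification implicit, whereas you have spelled out the details (invertible $2$-cells correspond to apex isomorphisms, the relation is a congruence, the coherence cells force associativity and unitality on classes); your extra bookkeeping is fine and nothing is missing.
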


\begin{proof} 
By decategorifying a bicategory $\bB$ we obtain a category $\mathsf{B}$ with the same objects, whose morphisms are isomorphism classes of 1-morphisms in $\bB$.  Applying this to ${}_L \bCsp(\X)$ we obtain ${}_L \Csp(\X)$.   Note that this category is independent of our
choice of pushouts in $\X$, since pushouts are unique up to isomorphism.
\end{proof}

\section{Symmetric monoidal double categories of structured cospans}

Now we give simple conditions under which the double category ${}_L \lCsp(\X)$, 
the bicategory ${}_L \bCsp(\X)$ and the category ${}_L \Csp(\X)$ all become symmetric monoidal.
We have seen that if $\X$ has pushouts and $L \maps \A \to \X$ is any functor then there is a double category of structured cospans ${}_L \lCsp(\X)$.   In Theorem \ref{thm:LCsp(X) symmetric} we show that ${}_L \lCsp(\X)$ becomes symmetric monoidal when $\A$ and $\X$ have finite colimits and $L$ preserves these.  The monoidal structure describes our ability to take two structured cospans:
\[
\begin{tikzpicture}[scale=1.2]
\node (A) at (0,0) {$L(a)$};
\node (B) at (1,1) {$x$};
\node (C) at (2,0) {$L(b)$};
\node (E) at (4,0) {$L(a')$};
\node (F) at (5,1) {$x'$};
\node (G) at (6,0) {$L(b')$};
\path[->,font=\scriptsize,>=angle 90]
(A) edge node[above,left]{$i$} (B)
(C)edge node[above,right]{$o$}(B)
(E) edge node[above,left]{$i'$} (F)
(G)edge node[above,right]{$o'$}(F);
\end{tikzpicture}
\]
and form a new one via coproduct:
\[
\begin{tikzpicture}[scale=1.2]
\node (I) at (1,0) {$L(a) + L(a')$};
\node (J) at (2,1) {$x + x'$};
\node (K) at (3,0) {$L(b) + L(b')$};
\node (L) at (0,-1) {$L(a + a')$};
\node (M) at (4,-1) {$L(b + b')$};
\path[->,font=\scriptsize,>=angle 90]
(I) edge node [above,left] {$i + i'$} (J)
(K) edge node [above,right] {$o + o'$} (J)
(L) edge node [above,left] {$\cong$} (I)
(M) edge node [above,right] {$\cong$} (K); 
\end{tikzpicture}
\]
One can check that this operation makes ${}_L \lCsp(\X)$ into a monoidal double category simply by verifying that a rather large number of diagrams commute.   This is the approach taken in \cite{CourThesis}.  There is nothing tricky about it.  Indeed, requiring that $L$ preserve finite colimits is overkill: it suffices for $L$ to preserve finite coproducts.  Thus, for most readers the best thing to do at this point would be to review the definition of `symmetric monoidal double category' in Appendix \ref{appendix}, look at the statement of Theorem \ref{thm:LCsp(X) symmetric}, and move on to the next section.

However, it is a bit irksome to check that all the necessary diagrams commute, especially
since one gets the feeling that there must be a simple underlying reason.  So, we decided to
give a more conceptual proof.  While perhaps harder to digest, this gives us more---at least 
when $F$ preserves finite colimits.  In this case we can do much more than take binary coproducts of structured cospans: we can take finite colimits of them!   This means that we can 
glue together structured cospans in more interesting ways than merely composing them 
end to end or setting them side by side.  Thus, we prove Theorem \ref{thm:LCsp(X) symmetric} 
as a consequence of a stronger result, Theorem \ref{thm:rex}, which captures the full range of ways we can take finite colimits of structured cospans.

The key concept we need is that of a `weak category' or `pseudocategory' \cite{NMF} in a 2-category. This is a slight generalization of the concept of double category.

\begin{defn}
Given a 2-category $\bC$, a \define{weak category} $\lD$ in $\bC$ consists of:
\begin{itemize}
\item an \define{object of objects} $\lD_0 \in \bC$ and an \define{object of arrows} $\lD_1 \in \bC$,
\item  \define{source} and \define{target} morphisms
\[  S,T \colon \lD_1 \to \lD_0 ,\]
\item 
an \define{identity-assigning} morphism
\[  U\colon \lD_0 \to \lD_1 ,\]
\item 
and a \define{composition} morphism
\[ \odot \colon \lD_1 \times_{\lD_0} \lD_1 \to \lD_1 \]
where the pullback is taken over $\lD_1 \xrightarrow[]{T} \lD_0 \xleftarrow[]{S} \lD_1$,
\end{itemize}
such that:
\begin{itemize}
\item the source and target morphisms behave as expected for identities:
\[ S \circ U = 1_{\lD_0} = T \circ U \]
and for composition:
\[ 	S \circ \odot = S \circ p_1, \qquad T \circ \odot = T \circ p_2 \]
where $p_1, p_2 \maps  \lD_1 \times_{\lD_0} \lD_1 \to \lD_1$ are projections to the 
two factors;
\item composition is associative up to a 2-isomorphism called the \define{associator}:
\[
\begin{tikzpicture}[scale=1.5]
\node (A) at (0,0) {$\lD_1 \times_{\lD_0} \lD_1 \times_{\lD_0} \lD_1$};
\node (C) at (2.5,0) {$\lD_1 \times_{\lD_0} \lD_1$};
\node (A') at (0,-1) {$\lD_1 \times_{\lD_0} \lD_1$};
\node (C') at (2.5,-1) {$\lD_1$};
\node (B) at (1.25,-0.5) {$\alpha \NEarrow$};

\path[->,font=\scriptsize,>=angle 90]
(A) edge node[above]{$1 \times \odot$} (C)
(A) edge node [left]{$\odot \times 1$} (A')
(C)edge node[right]{$\odot$}(C')
(A')edge node [below] {$\odot$}(C');

\end{tikzpicture}
\]
\item composition obeys the left and right unit laws up to 2-isomorphisms called the 
\define{left} and \define{right unitors}:
\[
\begin{tikzpicture}[scale=1.5]

\node (A'') at (4,0) {$\lD_0 \times_{\lD_0} \lD_1$};
\node (C'') at (6,0) {$\lD_1 \times_{\lD_0} \lD_1$};
\node (A''') at (8,0) {$\lD_1 \times_{\lD_0} \lD_0$};
\node (C''') at (6,-1) {$\lD_1$};
\node (B'') at (5.5,-0.35) {$\lambda \SWarrow$};
\node (B''') at (6.5,-0.35) {$\rho \SEarrow$};
\path[->,font=\scriptsize,>=angle 90]

(A'') edge node[above]{$U \times_{\lD_0} 1$} (C'')
(A''') edge node [above]{$1 \times_{\lD_0} U$} (C'')
(C'')edge node[left]{$\odot$}(C''')
(A'')edge node [below] {$p_2$}(C''')
(A''')edge node [below] {$p_1$}(C''');
\end{tikzpicture}
\]
\item $\alpha, \lambda$ and $\rho$ obey the pentagon identity and triangle identity.
\end{itemize}
\end{defn}
\noindent
In this definition we assume that the necessary pullbacks exist; if $\bC$ has pullbacks this is automatic.  

Consulting Appendix \ref{appendix}, the reader can check that a weak category in $\Cat$ is the same as a double category.  We need weak categories in the following 2-categories as well:
\begin{defn}
Let $\Rex$ be the 2-category with:
\begin{itemize} 
\item categories with chosen finite colimits as objects,
\item right exact functors as morphisms,
\item natural transformations as 2-morphisms.
\end{itemize}
\end{defn}
\begin{defn}
Let $\SMC$ be the 2-category with:
\begin{itemize}
\item symmetric monoidal categories as objects,
\item (strong) symmetric monoidal functors as morphisms,
\item monoidal natural transformations as 2-morphisms.
\end{itemize}
\end{defn}
\noindent
The word `rex' is an abbreviation of `right exact', which is another term for `preserving finite colimits'.    Note that a right exact functor need not preserve a given \emph{choice} of finite
colimits.   Thus, our 2-category $\Rex$ is 2-equivalent to one where no choices of finite colimits were made.   One reason for making these choices is that they give us an unambiguously 
defined 2-functor
\[  \Phi \maps \Rex \to \SMC \]  
as follows.  Given an object $\C \in \Rex$, $\Phi(\C)$ is the symmetric monoidal category $(\C, +, 0)$ where $+$ is the chosen binary coproduct and $0$ is the chosen initial object.  Each right exact functor $F \maps \C \to \C'$ between categories $\C,\C' \in \Rex$ then becomes symmetric monoidal in a canonical way, and each natural transformation between right exact functors becomes monoidal.   

Our plan now proceeds as follows.  First, in Theorem \ref{thm:rex}, we show that when $L \maps \A \to \X$ is a morphism in $\Rex$, the double category ${}_L \lCsp(\X)$  is not merely a weak category in $\Cat$, but actually a weak category in $\Rex$.   In Theorem \ref{thm:SMC} we use the 2-functor $\Phi$ to convert ${}_L \lCsp(\X)$ into a weak category in $\SMC$.  

Finally, from this weak category in $\SMC$, we wish to get a symmetric monoidal double category.
Here we need the concept of a `symmetric pseudomonoid' \cite{Verdon}. To understand the following definitions the reader should keep in mind the example where $\B$ is $\Cat$ made into a symmetric monoidal bicategory using cartesian products.   Then a pseudomonoid in $\B$ is a monoidal category, a braided pseudomonoid is a braided monoidal category, and a symmetric pseudomonoid is a symmetric monoidal category.  

\begin{defn} A \define{pseudomonoid} in a monoidal bicategory $\B$ is an object $M \in \B$ equipped with 1-morphisms called the \define{multiplication} $m \maps M \otimes M \to M$ and \define{unit} $i \maps I \to M$ that obey associativity and the left and right unit laws up to 2-isomorphisms called the \define{associator} and left and right \define{unitors}, that in turn obey the pentagon identity and triangle identity.
\end{defn}  

\begin{defn} A pseudomonoid $M$ in a braided monoidal bicategory $\B$ is 
\define{braided} if it is equipped with a 2-isomorphism 
\[       b \maps m \circ \beta \stackrel{\sim}{\To} m \]
where $\beta \maps M \otimes M \to M \otimes M$ is the braiding in $\B$, 
and $b$ obeys the hexagon identities \cite{MacLane}.
\end{defn}

\begin{defn} A braided pseudomonoid $M$ in a symmetric monoidal bicategory $\B$ is called
\define{symmetric} if
\[
\begin{tikzpicture}[scale=1.5]
\node (A) at (0,0) {$M \otimes M$};
\node (D) at (6,0) {$M$};
\node (B) at (2,0) {$M \otimes M$};
\node (C) at (4,0) {$M \otimes M$};
\node at (3,1) {\scriptsize $\lambda^{-1} \Downarrow$};
\node at (1.8,0.4) {\scriptsize $\sigma^{-1} \Downarrow$};
\node at (4,-0.4) {\scriptsize $b \Downarrow$};
\node at (3,-1) {\scriptsize $b \Downarrow$};
\path[->,font=\scriptsize,>=angle 90]
(A) edge[bend left=50, looseness =1] node[above]{$m$} (D)
(A) edge [bend left] node[above]{$1$} (C)
(A) edge  node[above]{$\beta$} (B)
(B) edge  node[above]{$\beta$} (C)
(C) edge  node[above]{$m$} (D)
(B) edge[bend right] node[below]{$m$} (D)
(A) edge[bend right=50, looseness=1] node[below]{$m$} (D);
\end{tikzpicture}
\]
is the identity 2-morphism from $m$ to $m$.  Here $\lambda$ is the left unitor for composition of 1-morphisms in $\B$ and $\sigma \maps \beta^2 \To 1$ is the syllepsis for $\B$.
\end{defn}
\noindent Readers unfamiliar with these concepts may be relieved to learn that the syllepsis in $\Cat$ is the identity; in a general symmetric monoidal bicategory the square of the braiding may be only \emph{isomorphic} to the identity, and this isomorphism is called the syllepsis \cite{DayStreet}.

The plan continues as follows.   Having shown that ${}_L \lCsp(\X)$ is a weak category in $\SMC$, 
we notice that such a thing is 
\begin{center}
a weak category in [symmetric pseudomonoids in $\Cat$].  
\end{center}
By `commutativity of internalization' we could hope that this is the same as
\begin{center}
a symmetric pseudomonoid in [weak categories in $\Cat$].
\end{center}
But the latter is precisely a symmetric double category.  So, ${}_L \lCsp(\X)$ should be a symmetric
monoidal double category.

Unfortunately, this hope is a bit naive.  Shulman explains the reason \cite{Shul2010}:

\begin{quote}
The general yoga of internalization says that an $X$ internal to $Y$s internal to $Z$s is equivalent to a $Y$ internal to $X$s internal to $Z$s, but this is only strictly true when the internalizations are all strict. We have defined a symmetric monoidal double category to be a (pseudo) symmetric monoid internal to (pseudo) categories internal to categories, but one could also consider a (pseudo) category internal to (pseudo) symmetric monoids internal to categories, i.e.\ a pseudo internal category in the 2-category $\SMC$ of symmetric monoidal categories and strong symmetric monoidal functors. This would give \textit{almost} the same definition, except that $S$ and $T$ would only be strong monoidal (preserving $\otimes$ up to isomorphism) rather than strict monoidal.
 \end{quote}
Luckily, the difference between the two definitions is quite small, so with a bit of care we
can arrange for ${}_L \lCsp(\X)$ to be a symmetric monoidal double category. 

We begin as follows:

\begin{thm} \label{thm:rex}
Given a morphism $L \maps \A \to \X$ in $\Rex$, the double category ${}_L \lCsp(\X)$ is a weak category object in $\Rex$.
\end{thm}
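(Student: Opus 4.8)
The plan is to recognize $_L\lCsp(\X)$, via Lemma~\ref{trick1}, as the double category built from $\lCsp(\X)$, and then to lift every ingredient of its structure from $\Cat$ to $\Rex$. Writing $\lD = {}_L\lCsp(\X)$, being a weak category object in $\Rex$ amounts to: chosen finite colimits on $\lD_0$ and $\lD_1$; right exactness of the source, target, identity-assigning and composition functors; existence in $\Rex$ of the pullbacks $\lD_1\times_{\lD_0}\lD_1$ and $\lD_1\times_{\lD_0}\lD_1\times_{\lD_0}\lD_1$; and the associator and unitors being $2$-morphisms of $\Rex$ obeying pentagon and triangle. Since $\lD$ is already a double category by Theorem~\ref{thm:LCsp(X)}, all the \emph{equational} parts — $S\circ U = 1 = T\circ U$, $S\circ\odot = S\circ p_1$, $T\circ\odot = T\circ p_2$, the pentagon and triangle identities — hold automatically, and the associator and unitors, being natural isomorphisms, are $2$-morphisms in $\Rex$ with nothing more to check. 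So the real content is exhibiting the colimits and checking right exactness.

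First I would treat $\lD_0$ and $\lD_1$. Here $\lD_0 = \A$ has chosen finite colimits by hypothesis. The arrow category $\lD_1$ has structured cospans $L(a)\xrightarrow{i}x\xleftarrow{o}L(b)$ as objects and the commuting cylinders of Theorem~\ref{thm:LCsp(X)} as morphisms; equivalently it is the strict pullback of the feet functor $(S,T)\maps \lCsp(\X)_1\to\X\times\X$ along $L\times L\maps \A\times\A\to\X\times\X$, where $\lCsp(\X)_1$ is the category of cospans in $\X$. Colimits in $\lCsp(\X)_1$ are pointwise, so a finite diagram $k\mapsto(a_k,b_k,x_k,i_k,o_k)$ in $\lD_1$ has a colimit with underlying data $(\colim_k a_k,\ \colim_k b_k,\ \colim_k x_k)$: the left leg is the composite $L(\colim_k a_k)\xrightarrow{\sim}\colim_k L(a_k)\to\colim_k x_k$, the first arrow being the comparison isomorphism available because $L$ is right exact, and the second induced by the cocone $\bigl(L(a_k)\xrightarrow{i_k}x_k\to\colim_k x_k\bigr)$; similarly for the right leg, and the universal property follows from those of $\colim_k a_k$, $\colim_k b_k$, $\colim_k x_k$. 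I would choose the colimits of $\lD_1$ in this way, so that $S,T\maps\lD_1\to\A$ strictly preserve chosen colimits; they are then right exact by construction. The identity-assigning functor $U\maps\A\to\lD_1$, $a\mapsto\bigl(L(a)\xrightarrow{1}L(a)\xleftarrow{1}L(a)\bigr)$, is right exact because $U(\colim_k a_k)$ is identified with $\colim_k U(a_k)$ through the comparison isomorphism $L(\colim_k a_k)\xrightarrow{\sim}\colim_k L(a_k)$. And since $S,T$ preserve chosen colimits strictly, the pullbacks $\lD_1\times_{\lD_0}\lD_1$ and $\lD_1\times_{\lD_0}\lD_1\times_{\lD_0}\lD_1$ acquire componentwise finite colimits making the projections right exact, hence are pullbacks in $\Rex$ as needed.

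The crux is right exactness of horizontal composition $\odot\maps\lD_1\times_{\lD_0}\lD_1\to\lD_1$. The composite of a composable pair $\bigl(L(a)\to x\leftarrow L(b),\ L(b)\to y\leftarrow L(c)\bigr)$ has apex the pushout $x+_{L(b)}y$, i.e.\ the colimit of the span $x\leftarrow L(b)\to y$. Given a finite diagram $k\mapsto(M_k,N_k)$ of composable pairs, its colimit is the composable pair $(\colim_k M_k,\colim_k N_k)$ — here we use that $T(\colim_k M_k)=\colim_k b_k=S(\colim_k N_k)$ on the nose — and the composite of this pair has apex $(\colim_k x_k)+_{L(\colim_k b_k)}(\colim_k y_k)$. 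Comparing with the colimit of the apices $x_k+_{L(b_k)}y_k$ of the composites $M_k\odot N_k$, I would invoke interchange of colimits (a pushout is a colimit, and colimits commute with colimits) together with right exactness of $L$:
\[
\colim_k\bigl(x_k +_{L(b_k)} y_k\bigr)\ \cong\ (\colim_k x_k) +_{\colim_k L(b_k)} (\colim_k y_k)\ \cong\ (\colim_k x_k) +_{L(\colim_k b_k)} (\colim_k y_k),
\]
and then check this isomorphism is compatible with the two legs, hence is an isomorphism of structured cospans. So $\odot$ preserves finite colimits, and $\lD$ is a weak category object in $\Rex$.

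The main obstacle is this last step: seeing that forming composites — which is done with pushouts in $\X$ — commutes with arbitrary finite colimits of structured cospans. This is exactly where one uses that $L$ itself, not merely $\X$, is right exact, through the comparison isomorphism $\colim_k L(b_k)\cong L(\colim_k b_k)$ at the glued feet; without it the middle leg of the span would fail to match the colimit. A secondary subtlety, flagged by Shulman's remark above, is that right exact functors need not preserve a \emph{chosen} colimit on the nose, so to make the pullbacks genuinely live in $\Rex$ one must pick the colimits in $\lD_1$ (and then in the pullbacks) coherently; we do this by arranging that $S$ and $T$ preserve the chosen colimits strictly.
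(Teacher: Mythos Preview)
Your proof is correct and follows essentially the same route as the paper: choose colimits in $\lD_1$ pointwise using those of $\A$ and $\X$ (so that $S,T$ are right exact), obtain right exactness of $\odot$ from colimits commuting with colimits together with right exactness of $L$, and check that the iterated pullbacks along $S,T$ inherit pointwise colimits. The one place where the paper is more explicit than you is the universal property of $\lD_1\times_{\lD_0}\lD_1$ in $\Rex$: you assert that componentwise colimits with right exact projections suffice, whereas the paper actually verifies that the induced functor $Q$ from an arbitrary cone in $\Rex$ is right exact (because the projections jointly detect colimits); conversely, you are more explicit about the role of the comparison isomorphism $L(\colim_k b_k)\cong\colim_k L(b_k)$ at the glued foot, which the paper leaves inside the phrase ``colimits commuting with other colimits.''
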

\begin{proof}
In the double category ${}_L \lCsp(\X)$,
\begin{itemize}
\item the category of objects ${}_L \lCsp(\X)_0$ is $\A$, while
\item the category of arrows ${}_L \lCsp(\X)_1$ has structured cospans
\[
\begin{tikzpicture}[scale=1.5]
\node (A) at (0,0) {$L(a)$};
\node (B) at (1,0) {$x$};
\node (C) at (2,0) {$L(b)$};
\path[->,font=\scriptsize,>=angle 90]
(A) edge node[above]{$i$} (B)
(C)edge node[above]{$o$}(B);
\end{tikzpicture}
\]
as objects and commutative diagrams of this form:
\[
\begin{tikzpicture}[scale=1.5]
\node (E) at (3,0) {$L(a)$};
\node (F) at (5,0) {$L(b)$};
\node (G) at (4,0) {$x$};
\node (E') at (3,-1) {$L(a')$};
\node (F') at (5,-1) {$L(b')$};
\node (G') at (4,-1) {$x'$};
\path[->,font=\scriptsize,>=angle 90]
(F) edge node[above]{$o$} (G)
(E) edge node[left]{$L(\alpha)$} (E')
(F) edge node[right]{$L(\beta)$} (F')
(G) edge node[left]{$f$} (G')
(E) edge node[above]{$i$} (G)
(E') edge node[below]{$i'$} (G')
(F') edge node[below]{$o'$} (G');
\end{tikzpicture}
\]
as morphisms.
\end{itemize}
We need to choose finite colimits for ${_L \lCsp(\X)_0}$ and ${_L \lCsp(\X)_1}$ and show the source and target functors
\[   S,T \maps {_L \lCsp(\X)_1} \to {_L \lCsp(\X)_0}, \]
the identity-assigning functor
\[    U \maps {_L \lCsp(\X)_0} \to {_L \lCsp(\X)_1}, \]
and the composition functor
\[   \circ \maps {}_L \lCsp(\X)_1 \times_{ _L\lCsp(\X)_0} {}_L \lCsp(\X)_1 \to {}_L \lCsp(\X)_1 \] 
are right exact.  We also need to check that all the pullbacks in $\Cat$ used to define the double category ${_L \lCsp(\X)}$ are also pullbacks in $\Rex$.

The category of objects ${}_L \lCsp(\X)_0 = \A$ has chosen finite colimits by hypothesis. The category of arrows ${}_L \lCsp(\X)_1$ has finite colimits because $L$ preserves finite colimits and these colimits are computed pointwise in $\X$.    We give ${}_L \lCsp(\X)_1$ \emph{chosen}
finite colimits using the chosen finite colimits in $\A$ and $\X$. The functors $S, T$ and $U$ are right exact, again because colimits in ${}_L \lCsp(\X)_1$ are computed pointwise in $\X$.    The functor $\circ$ sends a composable pair of structured cospans to their composite, which is defined using a pushout.    This functor is right exact as a consequence of colimits commuting with other colimits.

We also need to check that the category 
\[  \Z = {}_L \lCsp(\X)_1 \times_{{_L \lCsp(\X)_0}} {}_L\lCsp(\X)_1, \]
defined as a pullback in $\Cat$, is also a pullback in $\Rex$.   Note that objects of $\Z$ are
composable pairs of structured cospans:
\[   
\begin{tikzpicture}[scale=1]
\node (A) at (0,0) {$L(a)$};
\node (B) at (1,0) {$x$};
\node (C) at (2,0) {$L(b)$};
\node (D) at (3,0) {$y$};
\node (E) at (4,0) {$L(c),$};
\path[->,font=\scriptsize,>=angle 90]
(A) edge node[above]{$$} (B)
(C) edge node[above]{$$} (B)
(C) edge node[above]{$$} (D)
(E) edge node[above]{$$} (D);
\end{tikzpicture}
\]
while morphisms are commuting diagrams of the form
\[
\begin{tikzpicture}[scale=1.5]
\node (E) at (3,0) {$L(a)$};
\node (F) at (5,0) {$L(b)$};
\node (G) at (4,0) {$x$};
\node (H) at (7,0) {$L(c)$};
\node (I) at (6,0) {$y$};
\node (E') at (3,-1) {$L(a')$};
\node (F') at (5,-1) {$L(b')$};
\node (G') at (4,-1) {$x'$};
\node (H') at (7,-1) {$L(c').$};
\node (I') at (6,-1) {$y'$};
\path[->,font=\scriptsize,>=angle 90]
(F) edge node[above]{$$} (G)
(E) edge node[above]{$$} (G)
(E') edge node[below]{$$} (G')
(F') edge node[below]{$$} (G')
(F) edge node[above]{$$} (I)
(H) edge node[above]{$$} (I)
(F') edge node[above]{$$} (I')
(H') edge node[above]{$$} (I')
(G) edge node[left]{$f$} (G')
(I) edge node[left]{$g$} (I')
(E) edge node[left]{$L(\alpha)$} (E')
(F) edge node[left]{$L(\beta)$} (F')
(H) edge node[right]{$L(\gamma)$} (H');
\end{tikzpicture}
\]
Because $\A$ and $\X$ have finite colimits and $L$ preserves them, $\Z$ has finite colimits
computed pointwise.    Consider the pullback square in $\Cat$ defining $\Z$:
\[
\begin{tikzpicture}[scale=1.5]
\node (A) at (0,0) {$\Z$};
\node (C) at (2,0) {${_L\lCsp(\X)_1}$};
\node (A') at (0,-1) {${_L\lCsp(\X)_1}$};
\node (C') at (2,-1) {${_L\lCsp(\X)_0}$};
\path[->,font=\scriptsize,>=angle 90]
(A) edge node[above]{$P_2$} (C)
(A) edge node [left]{$P_1$} (A')
(C)edge node[right]{$T$}(C')
(A')edge node [below] {$S$}(C');
\end{tikzpicture}
\]
where $P_1$ projects to the first structured cospan of an object in $\Z$, and $P_2$ projects to the second.  All the arrows here are right exact because colimits are computed pointwise.
Suppose next that $F$ and $G$ below are right exact:
\[
\begin{tikzpicture}[scale=1.5]
\node (Q) at (-1,1) {$\mathsf{Q}$};
\node (A) at (0,0) {$\Z$};
\node (C) at (2,0) {${_L \lCsp(\X)_1}$};
\node (A') at (0,-1) {${_L\lCsp(\X)_1}$};
\node (C') at (2,-1) {${_L \lCsp(\X)_0}.$};
\path[->,font=\scriptsize,>=angle 90]
(Q) edge[out=210,in=180] node[left]{$F$} (A')
(Q) edge[bend left] node[above]{$G$} (C)
(Q) edge[dashed] node[above]{$Q$} (A)
(A) edge node[above]{$P_2$} (C)
(A) edge node [left]{$P_1$} (A')
(C)edge node[right]{$T$}(C')
(A')edge node [below] {$S$}(C');
\end{tikzpicture}
\]
Then there exists a unique functor $Q$ making the diagram commute.  This functor $Q$ is right exact because its composites with $P_1$ and $P_2$ are: since colimits in a diagram category are computed pointwise, a cocone in $\Z$ is a colimit of $F \maps \mathsf{D} \to \Z$ if and only if the `pieces' obtained by applying $P_1$ and $P_2$ to this cocone are colimits of $P_1 \circ F$ and $P_2 \circ F$, respectively.   

The other pullbacks used in defining the double category ${_L\lCsp(\X)}$, such as the pullback ${{}_L\lCsp(\X)_1} \times_{{_L\lCsp(\X)_0}} {_L\lCsp(\X)_1}  \times_{{_L\lCsp(\X)_0}} {_L\lCsp(\X)_1}$ used in defining the associator, are also pullbacks in $\Rex$ for the same sort of reason.
\end{proof}

Next we make ${}_L\lCsp(\X)$ into a weak category in $\SMC$.  We do this by applying the
2-functor $\Phi \maps \Rex \to \SMC$.

\begin{thm}
\label{thm:SMC}
Given a morphism $L \maps \A \to \X$ in $\Rex$, the functor $\Phi \maps \Rex \to \SMC$ maps the weak category ${}_L \lCsp(\X)$ in $\Rex$ to a weak category in $\SMC$.
\end{thm}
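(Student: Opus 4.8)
The plan is to obtain this as a direct consequence of Theorem~\ref{thm:rex} together with the general principle that a $2$-functor transports weak category objects to weak category objects, as long as it preserves the pullbacks used to form the composition morphism, the associator and the unitors.

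First I would record the data. By Theorem~\ref{thm:rex}, ${}_L\lCsp(\X)$ is a weak category in $\Rex$: the object of objects is $\A$, the object of arrows is ${}_L\lCsp(\X)_1$, the source, target and identity-assigning functors $S,T\maps{}_L\lCsp(\X)_1\to\A$ and $U\maps\A\to{}_L\lCsp(\X)_1$ and the composition functor $\odot$ out of the pullback ${}_L\lCsp(\X)_1\times_\A{}_L\lCsp(\X)_1$ are right exact, and the associator $\alpha$, left unitor $\lambda$ and right unitor $\rho$ are natural isomorphisms obeying the identity, pentagon and triangle axioms. Applying the $2$-functor $\Phi\maps\Rex\to\SMC$ produces symmetric monoidal categories $\Phi(\A)$ and $\Phi\bigl({}_L\lCsp(\X)_1\bigr)$, strong symmetric monoidal functors $\Phi(S),\Phi(T),\Phi(U),\Phi(\odot)$, and monoidal natural isomorphisms $\Phi(\alpha),\Phi(\lambda),\Phi(\rho)$. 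Since $\Phi$ is a $2$-functor it carries every equality of pasting composites to an equality of pasting composites, so the identity laws $\Phi(S)\circ\Phi(U)=1=\Phi(T)\circ\Phi(U)$, the source and target conditions on $\Phi(\odot)$, and the pentagon and triangle identities for $\Phi(\alpha),\Phi(\lambda),\Phi(\rho)$ all hold for free.

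The one point that is not purely formal is that $\Phi(\odot)$ and the $2$-cells $\Phi(\alpha),\Phi(\lambda),\Phi(\rho)$ must have the correct sources: we must know that $\Phi$ sends the pullbacks used in the definition --- the pullback ${}_L\lCsp(\X)_1\times_\A{}_L\lCsp(\X)_1$, its threefold analogue, and the two pullbacks along $U$ --- to the corresponding pullbacks in $\SMC$. Here Theorem~\ref{thm:rex} does the work: those pullbacks in $\Rex$ have underlying categories the corresponding pullbacks in $\Cat$, with chosen finite colimits, hence chosen coproducts and chosen initial objects, computed pointwise. Moreover $\Phi(S)$ and $\Phi(T)$ are in fact strictly monoidal, because the chosen coproduct (resp.\ chosen initial object) of structured cospans has as input, and as output, exactly the chosen coproduct (resp.\ chosen initial object) of the inputs, and of the outputs; so the strict pullback of underlying categories underlies a pullback in $\SMC$, carrying the pointwise symmetric monoidal structure. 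Since $\Phi$ equips a category in $\Rex$ with precisely its chosen-coproduct monoidal structure, and this is computed pointwise on the pullback categories, $\Phi$ applied to the $\Rex$-pullback coincides on the nose with the $\SMC$-pullback. Therefore $\Phi(\odot)$ and the associator and unitors land in the right place, and $\bigl(\Phi(\A),\Phi({}_L\lCsp(\X)_1),\Phi(S),\Phi(T),\Phi(U),\Phi(\odot),\Phi(\alpha),\Phi(\lambda),\Phi(\rho)\bigr)$ is a weak category in $\SMC$.

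I expect the main obstacle to be exactly this last verification --- confirming that the forgetful $2$-functor $\SMC\to\Cat$ creates the pullbacks at hand, with their pointwise monoidal structure, so that after applying $\Phi$ they agree with the $\Rex$-pullbacks of Theorem~\ref{thm:rex}. Everything else is bookkeeping: the action of $\Phi$ on objects, morphisms and $2$-cells was described before the theorem, and a $2$-functor preserves the weak category axioms automatically. (Note that $\Phi(U)$ and $\Phi(\odot)$ are in general only strong, not strict, monoidal; this does no harm here, but it is what will later force the ``bit of care'' needed in passing from a weak category in $\SMC$ to an honest symmetric monoidal double category.)
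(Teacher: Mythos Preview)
Your proposal is correct and identifies the same crux as the paper: the only nontrivial point is that $\Phi$ preserves the pullbacks $\lD_1 \times_{\lD_0} \lD_1$, etc., after which the weak category axioms transport along the $2$-functor automatically. The packaging differs slightly. You argue abstractly: since $\Phi(S)$ and $\Phi(T)$ are \emph{strict} symmetric monoidal (chosen coproducts of structured cospans have as feet the chosen coproducts of the feet), the forgetful $2$-functor $\SMC\to\Cat$ creates these particular pullbacks with the pointwise monoidal structure, and this pointwise structure coincides with what $\Phi$ assigns to the $\Rex$-pullback. The paper instead verifies the universal property of $\Phi(\Z)$ in $\SMC$ directly: given strong symmetric monoidal $F,G\colon\mathsf{Q}\to\Phi({}_L\lCsp(\X)_1)$ with $\Phi(S)F=\Phi(T)G$, it builds the symmetric monoidal structure on the induced $Q$ componentwise from the laxators of $F$ and $G$. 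Your route is cleaner, but note that the general principle you invoke --- that pullbacks in $\SMC$ along strict legs exist and are computed in $\Cat$ with pointwise tensor --- is precisely what the paper's explicit construction establishes in this instance; you correctly flag this as ``the main obstacle,'' and it does need an argument (checking that the componentwise laxator $(F_{a,b},G_{a,b})$ really lands in the pullback, which is where strictness of $S,T$ enters). The paper does not appeal to strictness of $S,T$ until the following theorem, so your observation that it already does the work here is a nice tightening.
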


\begin{proof}
We need to show that the various pullbacks in $\Rex$ used to make ${}_L \lCsp(\X)$ into a weak category in $\Rex$ are mapped by $\Phi$ to pullbacks in $\SMC$.   We do this only for 
the pullback $\Z = {}_L \lCsp(\X)_1 \times_{{}_L\lCsp(\X)_0} {}_L\lCsp(\X)_1$, since the others are similar.  To show that $\Phi(\Z)$ is the pullback of the following square in $\SMC$:
\[
\begin{tikzpicture}[scale=1.5]
\node (A) at (0,0) {$\Phi(\Z)$};
\node (C) at (2,0) {$\Phi({}_L \lCsp(\X)_1)$};
\node (A') at (0,-1) {$\Phi({}_L \lCsp(\X)_1)$};
\node (C') at (2,-1) {$\Phi({}_L \lCsp(\X)_0)$};
\path[->,font=\scriptsize,>=angle 90]
(A) edge node[above]{$\Phi(P_2)$} (C)
(A) edge node [left]{$\Phi(P_1)$} (A')
(C)edge node[right]{$\Phi(T)$}(C')
(A')edge node [above] {$\Phi(S)$}(C');
\end{tikzpicture}
\]
we need to show that for any symmetric monoidal category $\mathsf{Q}$ and symmetric
monoidal functors $F, G \maps \mathsf{Q} \to \Phi({}_L \lCsp(\X)_1)$ with $\Phi(S) F = \Phi(T) G$, there exists a unique symmetric monoidal functor $Q$ making this diagram commute:
\[
\begin{tikzpicture}[scale=1.5]
\node (Q) at (-1,1) {$\mathsf{Q}$};
\node (A) at (0,0) {$\Phi(\Z)$};
\node (C) at (2,0) {$\Phi({}_L\lCsp(\X)_1)$};
\node (A') at (0,-1) {$\Phi({}_L\lCsp(\X)_1)$};
\node (C') at (2,-1) {$\Phi({}_L\lCsp(\X)_0).$};
\path[->,font=\scriptsize,>=angle 90]
(Q) edge[out=210,in=180] node[left]{$F$} (A')
(Q) edge[bend left] node[above]{$G$} (C)
(Q) edge[dashed] node[above]{$Q$} (A)
(A) edge node[above]{$\Phi(P_2)$} (C)
(A) edge node [left]{$\Phi(P_1)$} (A')
(C)edge node[right]{$\Phi(T)$}(C')
(A')edge node [above] {$\Phi(S)$}(C');
\end{tikzpicture}
\]
By Theorem \ref{thm:rex} there exists a unique right exact functor $Q$ making the underlying diagram of functors commute.  We now show that this $Q$ can be made symmetric monoidal in such a way that the diagram commutes in $\SMC$.

First, let $0_\mathsf{Q}$ be the monoidal unit of $\mathsf{Q}$. Since $F \maps \mathsf{Q} \to \Phi({}_L\lCsp(\X)_1)$ is symmetric monoidal, we have an isomorphism between monoidal units: 
\[  F_0 \maps 0_{\Phi({}_L\lCsp(\X)_1)} \toiso F(0_\mathsf{Q}) \]
where $0_{\Phi({}_L\lCsp(\X)_1)}$ is initial in $\Phi({}_L\lCsp(\X)_1)$.  Similarly we have an
isomorphism 
\[  G_0 \maps 0_{\Phi({}_L\lCsp(\X)_1)} \toiso G(0_\mathsf{Q}) .\]
It follows that $Q(0_\mathsf{Q})$ is a pair of composable initial cospans in $\X$ so there
is a unique isomorphism
\[   Q_0 \colon  0_\Z \toiso Q(0_\mathsf{Q}). \]

Next, given two objects $a_1$ and $a_2$ in $\mathsf{Q}$, we have a natural isomorphism 
\[  F_{a_1,a_2} \colon F(a_1) + F(a_2) \toiso F(a_1 \otimes a_2) \] 
as $F$ is symmetric monoidal, and similarly for $G$. We know that as objects, $F(a_1)$ and $F(a_2)$ are simply cospans in $\X$ with $F(a_1) + F(a_2)$ their chosen coproduct. We also know that $Q(a_1)$ is a pair of composable cospans $(F(a_1),G(a_1))$ and likewise $Q(a_2)$ is a pair of composable cospans $(F(a_2),G(a_2))$. This results in a natural isomorphism 
\[   Q_{a_1,a_2} \maps  Q(a_1) + Q(a_2) \to Q(a_1 \otimes a_2) \]
given by the composite
\[   (F(a_1),G(a_1))+(F(a_2),G(a_2)) \toiso (F(a_1)+F(a_2),G(a_1)+G(a_2)) \toiso (F(a_1 \otimes a_2),G(a_1 \otimes a_2)) .  \]
One can check that this family of natural isomorphisms $Q_{a_1,a_2}$ together with the natural isomorphism $Q_0$ give $Q$ the structure of a symmetric monoidal functor, and that the above diagram then commutes in $\SMC$.
It follows that $\Phi(\Z)$ is a pullback square in $\SMC$, as was to be shown.
\end{proof}

In Theorem \ref{thm:SMC} we made ${}_L\lCsp(\X)$ into a weak category in $\SMC$.
Now we make ${}_L \lCsp(\X)$ into a symmetric monoidal double category.   

\begin{thm}
\label{thm:LCsp(X) symmetric}
Suppose $\A$ and $\X$ have finite colimits and $L \maps \A \to \X$ preserves them.   Choose finite colimits in $\A$ and $\X$.   Then the double category 
${}_L\lCsp(\X)$ becomes symmetric monoidal where:
\begin{itemize}
\item the tensor product of objects $a_1, a_2$ is their chosen coproduct $a_1 + a_2$ in $\A$,
\item the unit object is the chosen initial object $0_\A$ in $\A$,
\item the tensor product of two vertical 1-morphisms is given by
\[
\begin{tikzpicture}[scale=1.5]
\node (E) at (3,0) {$a_1$};
\node (F) at (3,-1) {$b_1$};
\node (G) at (4,0) {$a_2$};
\node (G') at (4,-1) {$b_2$};
\node (A) at (5.5,0) {$a_1 + a_2$};
\node (B) at (5.5,-1) {$b_1 +  b_2$};
\node (D) at (3.45,-0.5) {$\otimes$};
\node (C) at (4.6,-0.5) {$=$};
\path[->,font=\scriptsize,>=angle 90]
(E) edge node[left]{$f_1$} (F)
(G) edge node[left]{$f_2$} (G')
(A) edge node[left]{$f_1 + f_2$} (B);
\end{tikzpicture}
\]
\item the tensor product of horizontal 1-cells is given by
\[
\begin{tikzpicture}[scale=1.2]
\node (A) at (0,0) {$L(a)$};
\node (B) at (1,1) {$x$};
\node (C) at (2,0) {$L(b)$};
\node (D) at (2.5,0.5) {$\otimes$};
\node (E) at (3,0) {$L(a')$};
\node (F) at (4,1) {$x'$};
\node (G) at (5,0) {$L(b')$};
\node (H) at (5.4,0.5) {$=$};
\node (I) at (6,0) {$L(a + a')$};
\node (J) at (7,1) {$x + x'$};
\node (K) at (8,0) {$L(b + b')$};
\path[->,font=\scriptsize,>=angle 90]
(A) edge node[above,left]{$i$} (B)
(C)edge node[above,right]{$o$}(B)
(E) edge node[above,left]{$i'$} (F)
(G)edge node[above,right]{$o'$}(F)
(I) edge node [above,left] {$i + i'$} (J)
(K) edge node [above,right] {$o + o'$} (J);
\end{tikzpicture}
\]
where $i + i'$ and $o + o'$ are defined using the fact that $L$ preserves coproducts,
\item the unit horizontal 1-cell is given by
\[
\begin{tikzpicture}[scale=1.5]
\node (A) at (0,0) {$L(0_\A)$};
\node (B) at (1,0) {$0_\X$};
\node (C) at (2,0) {$L(0_\A)$};
\path[->,font=\scriptsize,>=angle 90]
(A) edge node[above]{$i$} (B)
(C)edge node[above]{$o$}(B);
\end{tikzpicture}
\]
where $0_\X$ is the chosen initial object in $\X$,
\item the tensor product of two 2-morphisms is given by:
\[
\begin{tikzpicture}[scale=1.5]
\node (E) at (3,0) {$L(a_1)$};
\node (F) at (5,0) {$L(b_1)$};
\node (G) at (4,0) {$x_1$};
\node (E') at (3,-1) {$L(a_2)$};
\node (F') at (5,-1) {$L(b_2)$};
\node (G') at (4,-1) {$x_2$};
\node (E'') at (6.5,0) {$L(a_1')$};
\node (F'') at (8.5,0) {$L(b_1')$};
\node (G'') at (7.5,0) {$x_1'$};
\node (E''') at (6.5,-1) {$L(a_2')$};
\node (F''') at (8.5,-1) {$L(b_2')$};
\node (G''') at (7.5,-1) {$x_2'$};
\node (X) at (5.75,-0.5) {$\otimes$};
\node (E'''') at (4,-2) {$L(a_1 + a_1')$};
\node (F'''') at (7.5,-2) {$L(b_1 + b_1')$};
\node (G'''') at (5.75,-2) {$x_1 + x_1'$};
\node (E''''') at (4,-3) {$L(a_2 + a_2')$};
\node (F''''') at (7.5,-3) {$L(b_2 + b_2'),$};
\node (G''''') at (5.75,-3) {$x_2 + x_2'$};
\node (Y) at (3,-2.5) {$=$};
\path[->,font=\scriptsize,>=angle 90]
(F) edge node[above]{$o_1$} (G)
(E) edge node[left]{$L(f)$} (E')
(F) edge node[right]{$L(g)$} (F')
(G) edge node[left]{$\alpha$} (G')
(E) edge node[above]{$i_1$} (G)
(E') edge node[below]{$i_2$} (G')
(F') edge node[below]{$o_2$} (G')
(F'') edge node[above]{$o_1'$} (G'')
(E'') edge node[left]{$L(f')$} (E''')
(F'') edge node[right]{$L(g')$} (F''')
(G'') edge node[left]{$\alpha'$} (G''')
(E'') edge node[above]{$i_1'$} (G'')
(E''') edge node[below]{$i_2'$} (G''')
(F''') edge node[below]{$o_2'$} (G''')
(F'''') edge node[above]{$o_1 + o_1'$} (G'''')
(E'''') edge node[left]{$L(f + f')$} (E''''')
(F'''') edge node[right]{$L(g + g')$} (F''''')
(G'''') edge node[left]{$\alpha + \alpha'$} (G''''')
(E'''') edge node[above]{$i_1 + i_1'$} (G'''')
(E''''') edge node[below]{$i_2 + i_2'$} (G''''')
(F''''') edge node[below]{$o_2 + o_2'$} (G''''');
\end{tikzpicture}
\]
\end{itemize}
and the associators, left and right unitors, and braidings are defined using the universal
properties of binary coproducts and unit objects.
\end{thm}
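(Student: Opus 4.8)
The plan is to obtain Theorem~\ref{thm:LCsp(X) symmetric} as essentially a reorganization of Theorem~\ref{thm:SMC}. That theorem already presents ${}_L\lCsp(\X)$ as a weak category object in $\SMC$: with the chosen coproducts in $\A$ and $\X$, both the category of objects $\A$ and the category of arrows ${}_L\lCsp(\X)_1$ are symmetric monoidal categories (tensor $=$ chosen coproduct, unit $=$ chosen initial object), the structure functors $S,T,U,\odot$ are strong symmetric monoidal, and the pullbacks $\Z = {}_L\lCsp(\X)_1 \times_{{}_L\lCsp(\X)_0}{}_L\lCsp(\X)_1$, its triple analogue, and so on are preserved. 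So the only remaining work is to promote this weak category in $\SMC$ to a genuine symmetric monoidal double category.

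The one subtlety is exactly the mismatch flagged in the quotation from Shulman: a symmetric monoidal double category (a symmetric pseudomonoid in the $2$-category of double categories, as in Appendix~\ref{appendix}) requires the source and target $S,T$ to be \emph{strict} monoidal, whereas a weak category in $\SMC$ only makes them strong monoidal. The first step of the proof is therefore to check that, for our chosen colimits, $S$ and $T$ are in fact strict. I would argue this directly from the proof of Theorem~\ref{thm:rex}: the chosen coproduct in ${}_L\lCsp(\X)_1$ of two structured cospans $L(a)\xrightarrow{i} x \xleftarrow{o} L(b)$ and $L(a')\xrightarrow{i'} x' \xleftarrow{o'} L(b')$ has input object $a+a'$, output object $b+b'$, apex $x+x'$ (the chosen coproducts in $\A$ and $\X$), and legs $i+i'$, $o+o'$ built from the canonical isomorphisms $L(a+a')\cong L(a)+L(a')$ and $L(b+b')\cong L(b)+L(b')$; likewise the chosen initial object is $L(0_\A)\to 0_\X\leftarrow L(0_\A)$. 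Since $S$ and $T$ merely read off the input and output objects, they carry these chosen coproducts and the chosen initial object of ${}_L\lCsp(\X)_1$ to the corresponding ones in $\A$ on the nose; hence $S$ and $T$ are strict monoidal. By contrast $U$ and $\odot$ remain only strong monoidal, because $L(a+a')$ is merely isomorphic (not equal) to $L(a)+L(a')$ and pushouts are built using colimits-commute-with-colimits; but this is precisely what the definition of a monoidal double category permits.

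With $S$ and $T$ strict, I would invoke the identification (recorded in Appendix~\ref{appendix}, following Shulman~\cite{Shul2010}) that such data is exactly a symmetric monoidal pseudo double category, and then simply read off that the induced operations are the ones listed in the statement. The monoidal structures on $\Phi(\A)$ and $\Phi({}_L\lCsp(\X)_1)$ give the tensor of objects and of vertical $1$-morphisms as $+$ in $\A$, and the tensor of horizontal $1$-cells and of $2$-morphisms as the chosen coproduct in ${}_L\lCsp(\X)_1$, which unwinds to the displayed formulas; the coherence $2$-isomorphisms making $U$ and $\odot$ monoidal, together with those exhibiting $\Phi(\Z)$ and its higher analogues as pullbacks in $\SMC$, assemble into the interchangers and coherence data of the monoidal double functor $\otimes\maps {}_L\lCsp(\X)\times {}_L\lCsp(\X)\to {}_L\lCsp(\X)$ and the unit; and the associators, unitors, braiding and symmetry all come from the corresponding structure on $\Phi(\A)$ and $\Phi({}_L\lCsp(\X)_1)$, i.e.\ from the universal properties of the chosen binary coproducts and initial objects in $\A$ and $\X$.

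The main obstacle is the bookkeeping in this last identification — Shulman's ``commutativity of internalization'' step — namely verifying that, once $S$ and $T$ are strict, the remaining strong-monoidal structure on $U$, $\odot$ and the pullback projections really does organize itself into the full list of axioms of a symmetric monoidal double category (pentagon, triangle, hexagons, syllepsis) rather than something slightly weaker. I expect this to require care but no new ideas, since every $2$-cell in play is a canonical comparison isomorphism between chosen colimits, every relevant diagram is computed pointwise in $\A$ and $\X$, and so every coherence condition reduces to a coherence condition for colimits in $\A$ and $\X$, which holds by the universal properties.
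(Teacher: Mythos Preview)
Your proposal is correct and follows essentially the same approach as the paper: invoke Theorem~\ref{thm:SMC} to get a weak category in $\SMC$, then observe that $S$ and $T$ are \emph{strict} monoidal because the chosen coproducts in ${}_L\lCsp(\X)_1$ were built using the chosen coproducts in $\A$ for the feet, and appeal to Shulman~\cite[Remark~2.12]{Shul2010} to conclude. The paper's proof is terser than yours---it does not spell out why $U$ and $\odot$ need only be strong, nor does it dwell on the coherence bookkeeping---but the argument is the same.
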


\begin{proof}
By Theorem \ref{thm:SMC}, ${_L \lCsp(\X)}$ is a weak category object in $\SMC$, 
so both its category of objects and category of arrows are symmetric monoidal.  To show that
it is a symmetric monoidal double category, we need only show that the source and target
functors 
\[   S,T \maps {}_L \lCsp(\X)_1 \to {}_L \lCsp(\X)_0 \]
are \emph{strict} symmetric monoidal \cite[Remark 2.12]{Shul2010}.  This follows because $S$ and $T$ simply pick out the input and output of a structured cospan, and we are using the same chosen binary coproducts and initial object in $\A$ in defining the monoidal structures on both ${}_L \lCsp(\X)_0$ and ${}_L \lCsp(\X)_1$.
\end{proof}

In fact, to make ${}_L \lCsp(\X)$ into a symmetric monoidal double category it suffices for $\A$ to have finite coproducts, $\X$ to have finite colimits, and $L$ to preserve finite coproducts \cite[Theorem 3.2.3]{CourThesis}.  But in the examples we have studied, $\A$ and $\X$ have finite colimits, and $L$, being a left adjoint, preserves all of these.

Next we take the symmetric monoidal double category ${}_L \lCsp(\X)$ and water it down, obtaining first a symmetric monoidal bicategory and then a symmetric monoidal category.   The definition of symmetric monoidal bicategory is nicely presented by Stay \cite{Stay}, who recalls how this definition was gradually discovered by a series of authors.   Shulman \cite{Shul2010} provides a convenient way to construct symmetric monoidal bicategories from symmetric monoidal double categories.  He defines a double category $\lD$ to be isofibrant if every vertical 1-isomorphism has a `companion' and a `conjoint' \cite{GP2}, and proves that if $\lD$ is symmetric monoidal and isofibrant, then $\bD$ becomes symmetric monoidal in a canonical way.

A \define{companion} of a vertical 1-morphism $f \maps a \to b$ is a horizontal 1-cell $\hat{f} \maps a \to b$ equipped with 2-morphisms	
\[
	\raisebox{-0.5\height}{
	\begin{tikzpicture}
		\node (A) at (0,1) {$a$};
		\node (B) at (1,1) {$b$};
		\node (A') at (0,0) {$b$};
		\node (B') at (1,0) {$b$};
		\path[->,font=\scriptsize,>=angle 90]
			(A) edge node[above]{$\hat{f}$} (B)
			(A) edge node[left]{$f$} (A')
			(B) edge node[right]{$1$} (B')
			(A') edge node[below]{$U_b$} (B');
		%
		\node () at (0.5,0.5) {\scriptsize{$\alpha \Downarrow$}};
	\end{tikzpicture}
	}
	\quad \text{ and } \quad
	\raisebox{-0.5\height}{
	\begin{tikzpicture}
		\node (A) at (0,1) {$a$};
		\node (B) at (1,1) {$a$};
		\node (A') at (0,0) {$a$};
		\node (B') at (1,0) {$b$};
		\path[->,font=\scriptsize,>=angle 90]
			(A) edge node[above]{$U_a$} (B)
			(A) edge node[left]{$1$} (A')
			(B) edge node[right]{$f$} (B')
			(A') edge node[below]{ $\hat{f}$} (B');
		%
		\node () at (0.5,0.5) {\scriptsize{$\beta \Downarrow$}};
	\end{tikzpicture}
	}
	\]
that obey these equations:
\begin{equation}
\label{eq:companion}
\raisebox{-0.5\height}{
\begin{tikzpicture}
	
		\node (A) at (0,2) {$a$};
		\node (B) at (1.1,2) {$a$};
		\node (A') at (0,1) {$a$};
		\node (B') at (1.1,1) {$b$};
		\node (A'') at (0,0) {$b$};
		\node (B'') at (1.1,0) {$b$};
		\path[->,font=\scriptsize,>=angle 90]
			(A) edge node[left]{$1$} (A')
			(A') edge node[left]{$f$} (A'')
			(B) edge node[right]{$f$} (B')
			(B') edge node[right]{$1$} (B'')
			(A) edge node[above]{$U_a$} (B)
			(A') edge  (B')
			(A'') edge node[below]{$U_b$} (B'');
		%
		\draw[line width=2mm,white] (0.5,.925) -- (0.5,1.075);
	      \node () at (0.5,1.5) {\scriptsize{$\beta \Downarrow$}};
		\node () at (0.5,0.5) {\scriptsize{$\alpha \Downarrow$}};
		\node () at (0.5,1) {\scriptsize $\hat{f}$};
	\end{tikzpicture}
	}
	\raisebox{-0.5\height}{=}
	\raisebox{-0.5\height}{
	\begin{tikzpicture}
		\node (A) at (0,1) {$a$};
		\node (B) at (1,1) {$a$};
		\node (A') at (0,0) {$b$};
		\node (B') at (1,0) {$b$};
		\path[->,font=\scriptsize,>=angle 90]
		(A) edge node[left]{$f$} (A')
		(B) edge node[right]{$f$} (B')
		(A) edge node[above]{$U_a$} (B)
		(A') edge node[below]{$U_b$} (B');
		%
		\node () at (0.5,0.5) {\scriptsize{$\Downarrow U_f$}};
	\end{tikzpicture}
	}
	\raisebox{-0.5\height}{\text{   and   }}
	\raisebox{-0.5\height}{
	\begin{tikzpicture}
		\node (A) at (0,1) {$a$};
		\node (A') at (0,0) {$a$};
		\node (B) at (1,1) {$a$};
		\node (B') at (1,0) {$b$};
		\node (C) at (2,1) {$b$};
		\node (C') at (2,0) {$b$};
		\node (A'') at (0,-1) {$a$};
		\node (C'') at (2,-1) {$b$};
		\path[->,font=\scriptsize,>=angle 90]
			(A) edge node[left]{$1$} (A')
			(B) edge node[left]{$f$} (B')
			(C) edge node[right]{$1$} (C')
			(A) edge node[above]{$U_a$} (B)
			(B) edge node[above]{$\hat{f}$} (C)
			(A') edge node[below]{$\hat{f}$} (B')
			(B') edge node[below]{$U_b$} (C')
			(A'') edge node[below]{$\hat{f}$} (C'')
			(A') edge node[left]{$1$} (A'')
			(C') edge node[right]{$1$} (C'');
		%
		\node () at (0.5,0.5) {\scriptsize{$\beta \Downarrow$}};
		\node () at (1.5,0.5) {\scriptsize{$\alpha \Downarrow$}};
		\node () at (0.8,-0.6) {\scriptsize{$\lambda_{\hat{f}} \Downarrow$}};
		
	\end{tikzpicture}
	}
	\raisebox{-0.5\height}{=}
	\raisebox{-0.5\height}{
	\begin{tikzpicture}
	     \node (A0) at (0,2) {$a$};
	     \node (B0) at (1,2) {$a$};
		\node (C0) at (2,2) {$b$};
		\node (A) at (0,1) {$a$};
		\node (C) at (2,1) {$b$};
		%
		\path[->,font=\scriptsize,>=angle 90]
			(A0) edge node[above]{$U_a$} (B0)
			(B0) edge node[above]{$\hat{f}$} (C0)
			(A0) edge node[left]{$1$} (A)
			(C0) edge node[right]{$1$} (C)
			(A) edge node[below]{$\hat{f}$} (C);
		%
		\node () at (0.8,1.5) {\scriptsize{$\rho_f \Downarrow$}};
	\end{tikzpicture}
	}
	\end{equation}
A \define{conjoint} of $f$ is a horizontal 1-cell $\check{f} \maps b \to a$ that is a companion of $f$ in the `horizontal opposite' of the double category in question.  Since ${}_L \lCsp(\X)$ is its own horizontal opposite, we only need to check the existence of companions.

\begin{cor}
\label{cor:LCsp(X) symmetric bicategory}
If $\A$ and $\X$ have finite colimits, $L \maps \A \to \X$ preserves them, and we choose finite colimits in both $\A$ and $\X$, then the bicategory ${}_L \bCsp(\X)$ of Corollary \ref{cor:LCsp(X) bicategory} becomes symmetric monoidal as follows:
\begin{itemize}
\item the tensor product of objects $a_1$ and $a_2$ is their chosen coproduct $a_1 + a_2$ in $\A$,
\item
the unit for the tensor product is the chosen initial object $0_A$ in $\A$,
\item the tensor product of 1-morphisms is given by
\[
\begin{tikzpicture}[scale=1.2]
\node (A) at (0,0) {$L(a)$};
\node (B) at (1,1) {$x$};
\node (C) at (2,0) {$L(b)$};
\node (D) at (2.5,0.5) {$\otimes$};
\node (E) at (3,0) {$L(a')$};
\node (F) at (4,1) {$x'$};
\node (G) at (5,0) {$L(b')$};
\node (H) at (5.65,0.5) {$=$};
\node (I) at (6.5,0) {$L(a + a')$};
\node (J) at (7.5,1) {$x + x'$};
\node (K) at (8.5,0) {$L(b + b')$};
\path[->,font=\scriptsize,>=angle 90]
(A) edge node[above,left]{$i$} (B)
(C)edge node[above,right]{$o$}(B)
(E) edge node[above,left]{$i'$} (F)
(G)edge node[above,right]{$o'$}(F)
(I) edge node [above,left] {$i + i'$} (J)
(K) edge node [above,right] {$o + o'$} (J);
\end{tikzpicture}
\]
\item the tensor product of 2-morphisms is given by
\[
\begin{tikzpicture}[scale=1.2]
\node (A) at (-0.5,0) {$L(a_1)$};
\node (B) at (0.5,1) {$x_1$};
\node (B') at (0.5,-1) {$x_1'$};
\node (F') at (4,-1) {$x_2'$};
\node (J') at (8.45,-1) {$x_1' + x_2'$};
\node (C) at (1.5,0) {$L(a_1')$};
\node (D) at (2.25,0) {$\otimes$};
\node (E) at (3,0) {$L(a_2)$};
\node (F) at (4,1) {$x_2$};
\node (G) at (5,0) {$L(a_2')$};
\node (H) at (5.7,0) {$=$};
\node (I) at (6.7,0) {$L(a_1 + a_2)$};
\node (J) at (8.45,1) {$x_1 + x_2$};
\node (K) at (10.2,0) {$L(a_1' + a_2')$};
\path[->,font=\scriptsize,>=angle 90]
(A) edge node[above,left]{$i_1$} (B)
(C)edge node[above,right]{$o_1$}(B)
(A) edge node[below,left]{$i_1'$} (B')
(C)edge node[below,right]{$o_1'$}(B')
(E) edge node[above,left]{$i_2$} (F)
(G)edge node[above,right]{$o_2$}(F)
(E) edge node[below,left]{$i_2'$} (F')
(G)edge node[below,right]{$o_2'$}(F')
(I) edge node [above,left] {$i_1 + i_2\;$} (J)
(K) edge node [above,right] {$\;o_1 + o_2$} (J)
(I) edge node [below,left] {$i_1' + i_2'\;$} (J')
(K) edge node [below,right] {$\;o_1' + o_2'$} (J')
(B) edge node [left] {$\alpha_1$} (B')
(F) edge node [left] {$\alpha_2$} (F')
(J) edge node [left] {$\alpha_1 + \alpha_2$} (J');
\end{tikzpicture}
\]
\item 
the associators, unitors, symmetries, and other structures of a symmetric monoidal
bicategory are constructed using the universal properties of binary coproducts and initial objects.
\end{itemize}
\end{cor}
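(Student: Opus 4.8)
The plan is to deduce this from Shulman's construction \cite{Shul2010}: a symmetric monoidal double category which is \emph{isofibrant}---meaning every vertical $1$-isomorphism has a companion and a conjoint---has an underlying bicategory that inherits a canonical symmetric monoidal structure. By Theorem \ref{thm:LCsp(X) symmetric}, under the present hypotheses ${}_L\lCsp(\X)$ is a symmetric monoidal double category, so the only thing to verify before invoking Shulman's theorem is isofibrancy. Moreover, as already noted, ${}_L\lCsp(\X)$ is isomorphic to its own horizontal opposite via the operation that swaps the two legs of a structured cospan $L(a)\to x\leftarrow L(b)$; hence a conjoint of a vertical $1$-morphism is the same datum as a companion of it in the horizontal opposite, and it suffices to exhibit companions.

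So first I would produce, for every isomorphism $f\maps a\to b$ in $\A$, a companion of the corresponding vertical $1$-morphism in ${}_L\lCsp(\X)$. Since $L$ is a functor, $L(f)\maps L(a)\to L(b)$ is an isomorphism in $\X$, and I claim the ``half-identity'' structured cospan
\[
\begin{tikzpicture}[scale=1.5]
\node (A) at (0,0) {$L(a)$};
\node (B) at (1,0) {$L(b)$};
\node (C) at (2,0) {$L(b)$};
\path[->,font=\scriptsize,>=angle 90]
(A) edge node[above]{$L(f)$} (B)
(C) edge node[above]{$1$} (B);
\end{tikzpicture}
\]
is a companion $\hat f\maps a\to b$. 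The required $2$-morphisms $\alpha\maps\hat f\To U_b$ and $\beta\maps U_a\To\hat f$, with vertical boundaries as prescribed in \eqref{eq:companion}, are the evident commuting squares whose middle arrow is $1_{L(b)}$ and $L(f)$ respectively; commutativity is immediate because $L(f)$ is the left leg of $\hat f$. It then remains to check the two companion identities of \eqref{eq:companion}. The first is immediate: the vertical composite of $\beta$ followed by $\alpha$ has middle arrow $1_{L(b)}\circ L(f)=L(f)$, which is exactly the middle arrow of $U_f$ (as are both vertical boundaries). The second unwinds the horizontal composite of $\hat f$ with $U_b$---computed by the pushout $L(b)+_{L(b)}L(b)\cong L(b)$---together with the right unitor of ${}_L\lCsp(\X)$, and reduces to a straightforward diagram chase using the explicit formulas recorded in Theorem \ref{thm:LCsp(X)}.

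With isofibrancy in hand, Shulman's theorem yields a symmetric monoidal bicategory, and what remains is to recognise it as the structure described in the statement. By Corollary \ref{cor:LCsp(X) bicategory} the bicategory ${}_L\bCsp(\X)$ has as objects, $1$-morphisms and $2$-morphisms the objects, horizontal $1$-cells and globular $2$-morphisms of ${}_L\lCsp(\X)$; Shulman's monoidal structure on it is simply the restriction of the one on ${}_L\lCsp(\X)$ from Theorem \ref{thm:LCsp(X) symmetric}. Reading off that restriction gives precisely the tensor products displayed above: the chosen coproduct $a_1+a_2$ on objects with unit $0_\A$, the structured cospan with apex $x+x'$ and legs the coproducts $i+i'$, $o+o'$ (formed using that $L$ preserves coproducts) on $1$-morphisms, and the pointwise coproduct $\alpha_1+\alpha_2$ on $2$-morphisms; while the associators, unitors and braiding are those supplied by the universal properties of binary coproducts and of the initial object, exactly as in the double-category case.

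The only genuine work is the verification of isofibrancy, i.e.\ the two companion equations of \eqref{eq:companion} for the triple $(\hat f,\alpha,\beta)$; everything else is a direct appeal to Shulman's theorem and to the explicit descriptions already established in Theorems \ref{thm:LCsp(X)} and \ref{thm:LCsp(X) symmetric}. That verification is routine: for a cospan-type double category the companion of a vertical morphism is always given by the half-identity cospan, and the equations fall out of the defining universal properties of the pushouts used to compose horizontal $1$-cells.
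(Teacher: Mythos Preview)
Your proposal is correct and follows essentially the same route as the paper: invoke Shulman's theorem after verifying isofibrancy of ${}_L\lCsp(\X)$, exploit the horizontal self-duality to reduce conjoints to companions, and take the companion of a vertical isomorphism $f$ to be the half-identity cospan $L(a)\xrightarrow{L(f)}L(b)\xleftarrow{1}L(b)$ with the evident $2$-morphisms $\alpha,\beta$. The paper records exactly the same companion and leaves the verification of \eqref{eq:companion} as ``an easy calculation,'' just as you do.
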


\begin{proof}

A vertical 1-isomorphism in ${}_L \lCsp(\X)$ is a isomorphism $f \maps a \to b$ in $\A$.  We take its companion $\hat{f}$ to be the structured cospan  
\[   
\begin{tikzpicture}[scale=1.5]
\node (A) at (0,0) {$L(a)$};
\node (B) at (1,0) {$L(b)$};
\node (C) at (2,0) {$L(b).$};
\path[->,font=\scriptsize,>=angle 90]
(A) edge node[above]{$L(f)$} (B)
(C)edge node[above]{$1$}(B);
\end{tikzpicture}
\]
The unit horizontal 1-cells $U_a$ and $U_b$ are given respectively by
\[
\begin{tikzpicture}[scale=1.5]
\node (A) at (0,0) {$L(a)$};
\node (B) at (1,0) {$L(a)$};
\node (C) at (2,0) {$L(a)$};
\node (X) at (3,0) {and};
\node (A') at (4,0) {$L(b)$};
\node (B') at (5,0) {$L(b)$};
\node (C') at (6,0) {$L(b)$};
\path[->,font=\scriptsize,>=angle 90]
(A) edge node[above]{$1$} (B)
(C) edge node [above]{$1$} (B)
(A') edge node[above]{$1$} (B')
(C') edge node [above]{$1$} (B');
\end{tikzpicture}
\]
and the accompanying 2-morphisms $\alpha$ and $\beta$ are given by
\[
\begin{tikzpicture}[scale=1.5]
\node (E) at (3,0) {$L(a)$};
\node (F) at (5,0) {$L(b)$};
\node (G) at (4,0) {$L(b)$};
\node (E') at (3,-1) {$L(b)$};
\node (F') at (5,-1) {$L(b)$};
\node (G') at (4,-1) {$L(b)$};
\node (X) at (6,-0.5) {and};
\node (E'') at (7,0) {$L(a)$};
\node (F'') at (9,0) {$L(a)$};
\node (G'') at (8,0) {$L(a)$};
\node (E''') at (7,-1) {$L(a)$};
\node (F''') at (9,-1) {$L(b)$};
\node (G''') at (8,-1) {$L(b)$};
\path[->,font=\scriptsize,>=angle 90]
(F) edge node[above]{$1$} (G)
(E) edge node[left]{$L(f)$} (E')
(F) edge node[right]{$1$} (F')
(G) edge node[left]{$1$} (G')
(E) edge node[above]{$L(f)$} (G)
(E') edge node[below]{$1$} (G')
(F') edge node[below]{$1$} (G')
(F'') edge node[above]{$1$} (G'')
(E'') edge node[left]{$1$} (E''')
(F'') edge node[right]{$L(f)$} (F''')
(G'') edge node[left]{$L(f)$} (G''')
(E'') edge node[above]{$1$} (G'')
(E''') edge node[below]{$L(f)$} (G''')
(F''') edge node[below]{$1$} (G''');
\end{tikzpicture}
\]
respectively.  An easy calculation verifies Equation (\ref{eq:companion}).
\end{proof}

\begin{cor}
\label{cor:LCsp(X) symmetric category}
If $\A$ and $\X$ have finite colimits, $L \maps \A \to \X$ preserves them, and we choose binary coproducts and an initial object in $\A$, then the category ${}_L \Csp(\X)$ of Corollary \ref{_L Csp(X) category} becomes symmetric monoidal as follows:
\begin{itemize}
\item the tensor product of objects $a_1$ and $a_2$ is their chosen coproduct 
$a_1 + a_2$ in $\A$,
\item
the unit for the tensor product is the chosen initial object $0_\A$ in $\A$, 
\item the tensor product of morphisms is given by
\[
\begin{tikzpicture}[scale=1.2]
\node (A) at (0,0) {$L(a)$};
\node (B) at (1,1) {$x$};
\node (C) at (2,0) {$L(b)$};
\node (D) at (2.5,0.5) {$\otimes$};
\node (E) at (3,0) {$L(a')$};
\node (F) at (4,1) {$x'$};
\node (G) at (5,0) {$L(b')$};
\node (H) at (5.4,0.5) {$=$};
\node (I) at (6,0) {$L(a + a')$};
\node (J) at (7,1) {$x + x'$};
\node (K) at (8,0) {$L(b + b')$};
\path[->,font=\scriptsize,>=angle 90]
(A) edge node[above,left]{$i$} (B)
(C)edge node[above,right]{$o$}(B)
(E) edge node[above,left]{$i'$} (F)
(G)edge node[above,right]{$o'$}(F)
(I) edge node [above,left] {$i + i'$} (J)
(K) edge node [above,right] {$o + o'$} (J);
\end{tikzpicture}
\]
where in each case the cospan actually denotes an isomorphism class of cospans,
\item 
the associator, left and right unitors, and symmetry are constructed using the universal
properties of binary coproducts and initial objects.
\end{itemize}
\end{cor}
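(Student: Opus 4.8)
The plan is to obtain \({}_L\Csp(\X)\) together with its symmetric monoidal structure by \emph{decategorifying} the symmetric monoidal bicategory \({}_L\bCsp(\X)\) of Corollary \ref{cor:LCsp(X) symmetric bicategory}, in exactly the way that Corollary \ref{_L Csp(X) category} produces the plain category \({}_L\Csp(\X)\) from the plain bicategory \({}_L\bCsp(\X)\) of Corollary \ref{cor:LCsp(X) bicategory}. Recall that this decategorification keeps the objects, takes morphisms to be isomorphism classes of 1-morphisms, and defines composition from horizontal composition of 1-morphisms; this is well defined and associative because composition of 1-morphisms in a bicategory is functorial in 2-morphisms and associative up to invertible 2-morphism. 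So the only remaining task is to check that the monoidal and symmetry data of \({}_L\bCsp(\X)\) descend along this quotient, and that the resulting structure is the one described in the statement.

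First I would record that the tensor product descends to \({}_L\Csp(\X)\). On objects it is unchanged: the chosen coproduct in \(\A\), with unit the chosen initial object \(0_\A\). On morphisms, the tensor product pseudofunctor of \({}_L\bCsp(\X)\) sends a pair of 1-morphisms to a 1-morphism and a pair of 2-morphisms to a 2-morphism, hence a pair of \emph{invertible} 2-morphisms to an invertible one; so isomorphic structured cospans have isomorphic tensors and \(\otimes\) is well defined on isomorphism classes. Its functoriality on \({}_L\Csp(\X)\), namely compatibility with composition and identities, holds because the interchange and unit comparison 2-isomorphisms of the tensor pseudofunctor become identities after passing to isomorphism classes. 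Concretely, the tensor of \(L(a)\to x\leftarrow L(b)\) and \(L(a')\to x'\leftarrow L(b')\) is the isomorphism class of the structured cospan \(L(a+a')\to x+x'\leftarrow L(b+b')\) whose legs are the composites of \(i+i'\) and \(o+o'\) with the canonical isomorphisms \(L(a+a')\cong L(a)+L(a')\) and \(L(b+b')\cong L(b)+L(b')\) coming from the fact that \(L\) preserves coproducts; this is the formula in the statement.

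Next I would extract the associator, left and right unitors, and symmetry. In \({}_L\bCsp(\X)\) these are adjoint-equivalence 1-morphisms, and an equivalence 1-morphism becomes an isomorphism upon decategorification, so we obtain natural isomorphisms \(\alpha,\lambda,\rho\) and a natural isomorphism \(\beta\) to serve as the symmetry of \({}_L\Csp(\X)\). Naturality follows from the pseudonaturality of the corresponding transformations in the bicategory: each naturality square commutes up to an invertible 2-morphism, hence strictly in \({}_L\Csp(\X)\). The pentagon and triangle identities, the two hexagon identities, and the symmetry axiom \(\beta_{a_2,a_1}\circ\beta_{a_1,a_2}=1_{a_1+a_2}\) all hold because \({}_L\bCsp(\X)\) carries the corresponding coherence 2-isomorphisms (pentagonator, hexagonators, syllepsis, and so on), and any two 1-morphisms related by an invertible 2-morphism are identified in \({}_L\Csp(\X)\). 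Since in Corollary \ref{cor:LCsp(X) symmetric bicategory} these structure 1-cells were themselves built from the universal properties of the chosen binary coproducts and initial objects in \(\A\) and \(\X\), the induced isomorphisms in \({}_L\Csp(\X)\) are precisely the maps described in the statement; different choices of coproducts and initial object yield isomorphic symmetric monoidal categories, as pushouts and finite colimits are unique up to canonical isomorphism.

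The main obstacle is not conceptual but bookkeeping: one must make the slogan ``a symmetric monoidal bicategory decategorifies to a symmetric monoidal category'' precise enough to be sure that \emph{every} axiom of a symmetric monoidal category is accounted for, and in particular that the syllepsis of \({}_L\bCsp(\X)\)---which in our situation is literally the identity, since the square of the braiding is the identity permutation on a coproduct---is what produces the symmetry axiom rather than merely a braiding. An entirely self-contained alternative, the route taken in \cite{CourThesis}, is to write \(\alpha,\lambda,\rho,\beta\) down directly from the universal property of coproducts and to verify the finitely many coherence diagrams of a symmetric monoidal category by hand; this avoids invoking the full theory of symmetric monoidal bicategories at the cost of some routine diagram chases.
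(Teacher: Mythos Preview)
Your proposal is correct and takes essentially the same approach as the paper: decategorify the symmetric monoidal bicategory \({}_L\bCsp(\X)\) of Corollary~\ref{cor:LCsp(X) symmetric bicategory} by passing to isomorphism classes of 1-morphisms. The paper's own proof is considerably terser---it simply asserts that any symmetric monoidal bicategory gives rise to a symmetric monoidal category via this quotient and then applies the construction---whereas you spell out in more detail why the tensor product, coherence isomorphisms, and axioms all descend; but the underlying argument is the same.
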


\begin{proof}
It can be checked by inspecting the definitions that
any symmetric monoidal bicategory $\B$ gives rise to a symmetric monoidal category $\mathsf{B}$ where:
\begin{itemize}
\item
the objects of $\mathsf{B}$ are those of $\B$,
\item
the morphisms of $\mathsf{B}$ are isomorphism classes of morphisms of $\B$,
\item
the unit object and the tensor product of objects are those of $\B$,
\item
the tensor product of morphisms, the associator,
the left and right unitor, and the symmetry of $\mathsf{B}$ arise from those of $\B$
by taking isomorphism classes.
\end{itemize}
Applying this `decategorification' construction to the symmetric monoidal bicategory \break
${}_L \bCsp(\X)$ gives the symmetric monoidal category ${}_L \Csp(\X)$.   
\end{proof}

The symmetric monoidal category  ${}_L \Csp(\X)$ is determined up to equality by the choice of binary coproducts and initial object in $\A$, but different choices of this data give isomorphic symmetric monoidal categories.

Readers interested in hypergraph categories may be pleased to learn that structured cospan categories tend to be of this type.  A `hypergraph category' is a symmetric monoidal category where each object has the structure of a special commutative Frobenius monoid in a way that is compatible with tensor products but not necessarily preserved
by morphisms \cite{Fong2015}.  Such categories are ubiquitous in network theory, where Frobenius structure allows us to split, join, start and terminate strings in string diagrams \cite{Fong2016}.    While the definition of hypergraph category may seem awkward at first, Fong and Spivak have clarified this concept using operads \cite{FongSpivak}.

\begin{thm}
\label{thm:hypergraph}
If $\A$ and $\X$ have finite colimits, $L \maps \A \to \X$ preserves them, and we choose binary coproducts and an initial object in $\A$, then the symmetric monoidal category ${}_L \Csp(\X)$ is a hypergraph category where each object $a \in \A$ is a special commutative Frobenius monoid as follows:
\begin{itemize}
\item
The multiplication is given by the structured cospan
\[
\begin{tikzpicture}[scale=1.5]
\node (A) at (0,0) {$L(a+a)$};
\node (B) at (1.5,0) {$L(a)$};
\node (C) at (2.5,0) {$L(a)$.};
\path[->,font=\scriptsize,>=angle 90]
(A) edge node[above]{$L(\nabla)$} (B)
(C) edge node[above]{$1$}(B);
\end{tikzpicture}
\]
where $\nabla \maps a+a \to a$ is the fold map.
\item
The unit is given by
\[
\begin{tikzpicture}[scale=1.5]
\node (A) at (0,0) {$L(0)$};
\node (B) at (1,0) {$L(a)$};
\node (C) at (2,0) {$L(a)$.};
\path[->,font=\scriptsize,>=angle 90]
(A) edge node[above]{$L(!)$} (B)
(C) edge node[above]{$1$}(B);
\end{tikzpicture}
\]
where $! \maps 0 \to a$ is the unique morphism.
\item 
The comultiplication is given by
\[
\begin{tikzpicture}[scale=1.5]
\node (A) at (0,0) {$L(a)$};
\node (B) at (1,0) {$L(a)$};
\node (C) at (2.5,0) {$L(a+a)$.};
\path[->,font=\scriptsize,>=angle 90]
(A) edge node[above]{$1$} (B)
(C) edge node[above]{$L(\nabla)$}(B);
\end{tikzpicture}
\]
\item
The counit is given by
\[
\begin{tikzpicture}[scale=1.5]
\node (A) at (0,0) {$L(a)$};
\node (B) at (1,0) {$L(a)$};
\node (C) at (2,0) {$L(0)$.};
\path[->,font=\scriptsize,>=angle 90]
(A) edge node[above]{$1$} (B)
(C) edge node[above]{$L(!)$}(B);
\end{tikzpicture}
\]
\end{itemize}
\end{thm}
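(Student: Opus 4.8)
The plan is to deduce the theorem from the standard fact that the cospan category $\Csp(\X) := {}_{\id_\X}\Csp(\X)$ is itself a hypergraph category \cite{FongSpivak}, in which each object $x$ is the special commutative Frobenius monoid whose multiplication, unit, comultiplication and counit are the cospans $x+x \xrightarrow{\nabla} x \xleftarrow{1} x$, $0 \xrightarrow{!} x \xleftarrow{1} x$, $x \xrightarrow{1} x \xleftarrow{\nabla} x+x$ and $x \xrightarrow{1} x \xleftarrow{!} 0$. (This holds for any $\X$ with finite coproducts: since $(+,0)$ is cocartesian every object is canonically a commutative monoid via $\nabla$ and $!$, and the cospan construction freely adjoins formal adjoints to these maps, so the comonoid, Frobenius and speciality relations come for free.)

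First I would introduce the functor $G \maps {}_L\Csp(\X) \to \Csp(\X)$ that sends an object $a$ to $L(a)$ and is the identity on morphisms --- this makes sense because, by Corollary \ref{_L Csp(X) category}, a morphism $a \to b$ in ${}_L\Csp(\X)$ is an isomorphism class of cospans $L(a) \to x \leftarrow L(b)$ in $\X$, which is precisely a morphism $L(a)\to L(b)$ in $\Csp(\X)$. Thus $G$ is full and faithful. Because $L$ preserves the chosen coproducts and initial object, the comparison isomorphisms $L(a)+L(a') \toiso L(a+a')$ and $0_\X \toiso L(0_\A)$ make $G$ a strong symmetric monoidal functor, so it exhibits ${}_L\Csp(\X)$ as symmetric monoidal equivalent to the full symmetric monoidal subcategory of $\Csp(\X)$ on those objects isomorphic to some $L(a)$. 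This subcategory contains the tensor unit and is closed under $\otimes$, and being full it contains all the Frobenius structure morphisms of each of its objects; hence it inherits a hypergraph structure from $\Csp(\X)$, since all the relevant axioms are equations among morphisms that already lie in the subcategory.

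It then remains to identify the transported structure. Under the comparison isomorphism $L(a+a)\toiso L(a)+L(a)$ the morphism $L(\nabla_a)$ corresponds to $\nabla_{L(a)}$ and $L(!_a)$ to $!_{L(a)}$, precisely because $L$ preserves coproducts; so $G$ carries the four cospans displayed in the statement to the canonical Frobenius structure on $L(a) \in \Csp(\X)$, modulo this coherence isomorphism. Since $G$ is fully faithful it reflects every equation between morphisms, so the Frobenius, commutativity and speciality axioms for the displayed cospans follow from those in $\Csp(\X)$; and since $G$ is strong monoidal, compatibility of these Frobenius structures with $\otimes$ transports as well. The main bookkeeping obstacle is exactly this last compatibility: it is the one axiom phrased in terms of the coherence isomorphisms of $G$ rather than merely its action on morphisms, and checking it amounts to pasting the coherently commuting comparison squares for the various iterated coproducts $L(a+a)+L(b+b)\cong L((a+b)+(a+b))$ and the like. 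Alternatively, one could bypass $\Csp(\X)$ entirely and verify each axiom by hand: every composite in question is computed as a pushout in $\X$ one of whose legs is an identity, so it collapses to a single cospan with a leg of the form $L(\varphi)$ for $\varphi$ a canonical fold-type map in $\A$, and the axioms reduce to the evident identities among such maps --- with the Frobenius law being the most laborious case.
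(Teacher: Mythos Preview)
Your argument is correct and takes a genuinely different route from the paper's. The paper instead uses the functor $F \maps \Csp(\A) \to {}_L\Csp(\X)$ that applies $L$ to every cospan in $\A$: this functor is symmetric monoidal (since $L$ preserves finite colimits) and is \emph{bijective on objects}, so the hypergraph structure on $\Csp(\A)$ pushes forward to ${}_L\Csp(\X)$ directly, with no coherence isomorphisms to manage and with uniqueness immediate. You go in the opposite direction, embedding ${}_L\Csp(\X)$ into $\Csp(\X)$ via the fully faithful strong monoidal $G$ and pulling the hypergraph structure back. Your approach is perfectly sound, but the price you pay is exactly the bookkeeping you flag: because $G$ is only strong (not strict) monoidal, the compatibility-with-tensor axiom must be transported across the comparison isomorphisms $L(a)+L(b) \cong L(a+b)$, whereas in the paper's bijective-on-objects setup this axiom is inherited as a bare equation. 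On the other hand, your route makes it transparent that ${}_L\Csp(\X)$ is a full symmetric monoidal subcategory of the familiar $\Csp(\X)$, which is a useful perspective in its own right.
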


\begin{proof}
Whenever $F \maps \C \to \D$ is a symmetric monoidal functor bijective on objects and $\C$ is a hypergraph category, there is a unique way to make $\D$ into a hypergraph category such that $F$ is a hypergraph functor.  To see this, first note that $F$ equips each object of $\D$ with the structure of a special commutative Frobenius monoid, coming from its structure in $\C$.  These Frobenius structures are compatible with tensor product because they were in $\C$ and $F$ is symmetric monoidal.  Thus, $\D$ becomes a hypergraph category.   By construction $F \maps \C \to \D$ preserves the Frobenius structures on objects, so $F$ is a hypergraph functor.  Moreover, the Frobenius structures on objects of $\D$ are uniquely determined by this requirement.

Let $\Csp(\A)$ 
be the symmetric monoidal category whose morphisms are isomorphism classes of
cospans in $\A$.   Since $L$ preserves finite colimits, there is a symmetric monoidal 
functor $F \maps \Csp(\A) \to {}_L\Csp(\X)$ given as follows:
\[
\begin{tikzpicture}[scale=1.2]
\node (A) at (0,0) {$a$};
\node (B) at (1,1) {$c$};
\node (C) at (2,0) {$b$};
\node (D) at (2.5,0.5) {$\mapsto$};
\node (E) at (3,0) {$L(a)$};
\node (F) at (4,1) {$L(c)$};
\node (G) at (5,0) {$L(b).$};
\path[->,font=\scriptsize,>=angle 90]
(A) edge node[above,left]{$i$} (B)
(C) edge node[above,right]{$o$}(B)
(E) edge node[above,left]{$L(i)$} (F)
(G) edge node[above,right]{$L(o)$}(F);
\end{tikzpicture}
\]
This is bijective on objects, and $\Csp(\A)$ is a hypergraph category \cite{Fong2015}, so
${}_L\Csp(\X)$ has a unique hypergraph category structure making $F$ into a hypergraph
functor.   This is given as in the statement of the theorem.
\end{proof}

\section{Maps between structured cospan double categories}

In this section we show how to construct maps between structured cospan categories, or 
bicategories, or double categories.   As before, it is best to start with double categories and work our way down.   A map between double categories is called a `double functor', and these are defined in Definition \ref{defn:double_functor}.   Suppose that we have structured cospan double categories coming from functors $L \maps \A \to \X$ and $L' \maps \A' \to X'$, where $\X$ and $\X'$ have chosen pushouts.   Then we get a double functor between these double categories from a diagram of this form:
\[
\begin{tikzpicture}[scale=1.5]
\node (A) at (0,0) {$\A$};
\node (B) at (1,0) {$\X$};
\node (C) at (0,-1) {$\A'$};
\node (D) at (1,-1) {$\X'$};
\node (F) at (0.5,-0.5) {$\alpha \NEarrow$};
\path[->,font=\scriptsize,>=angle 90]
(A) edge node [above] {$L$} (B)
(A) edge node [left]{$F_0$} (C)
(B) edge node [right]{$F_1$} (D)
(C) edge node [below] {$L'$} (D);
\end{tikzpicture}
\]
where $F_0$ is a functor, $F_1$ is a functor preserving pushouts, and $\alpha$ is a natural isomorphism.
We prove this in Theorem \ref{thm:double_functor}.  Furthermore, if all four categories involved have finite colimits and all four functors preserve these, then this double functor is symmetric monoidal---a concept defined in Definition \ref{defn:monoidal_double_functor}.   We prove this in Theorem \ref{thm:symmetric_monoidal_double_functor}.

\begin{defn}
Given a 2-category $\bC$ and two weak categories $\lD$ and $\lD'$ in $\bC$, a \define{weak functor}  $\lF \colon \lD \to \lD'$ in $\bC$ consists of:
\begin{itemize}
\item a morphism of objects $\lF_0 \colon \lD_0 \to \lD_0'$,
\item a morphism of arrows $\lF_1 \colon \lD_1 \to \lD_1'$,
\end{itemize}
such that:
\begin{itemize}
\item $\lF$ preserves the source and target morphisms:
$S' \circ \lF_1 = \lF_0 \circ S$ and $T' \circ \lF_1 = \lF_0 \circ T$,
\item
composition and the identity-assigning morphism are preserved up to 2-isomorphisms $\lF_\odot$
and $\lF_U$, respectively:
\[
\begin{tikzpicture}[scale=1.5]
\node (A) at (0,0) {$\lD_1 \times_{\lD_0} \lD_1$};
\node (B) at (1.5,0) {$\lD_1$};
\node (C) at (0,-1) {$\lD_1' \times_{\lD_0'} \lD_1'$};
\node (D) at (1.5,-1) {$\lD_1'$};
\node (E) at (0.75,-0.5) {$\lF_{\odot} \NEarrow$};
\node (A') at (2.5,0) {$\lD_0$};
\node (B') at (4,0) {$\lD_1$};
\node (C') at (2.5,-1) {$\lD_0'$};
\node (D') at (4,-1) {$\lD_1'$};
\node (E') at (3.25,-0.5) {$\lF_U \NEarrow$};
\path[->,font=\scriptsize,>=angle 90]
(A) edge node[above]{$\circ$} (B)
(B) edge node [right]{$\lF_1$} (D)
(C) edge node [above] {$\circ'$} (D)
(A)edge node[left]{$\lF_1 \times_{\lF_0} \lF_1$}(C)
(A') edge node[above]{$U$} (B')
(B') edge node [right]{$\lF_1$} (D')
(C') edge node [above] {$U'$} (D')
(A')edge node[left]{$\lF_0$}(C');
\end{tikzpicture}
\]
\item 
the 2-isomorphisms $\lF_\odot$ and $\lF_U$ satisfy the hexagon and square identities familiar from the definition of a monoidal functor.
\end{itemize}
\end{defn}
\noindent
A weak functor in $\Cat$ is the same as a double functor, and one can consult Definition \ref{defn:double_functor} to see the hexagon and square identities in this case.  We will also need weak functors in $\Rex$ and $\SMC$.

We begin by getting double functors between structured cospan double categories.

\begin{thm}
\label{thm:double_functor}
Suppose we have a square in $\Cat$:
\[
\begin{tikzpicture}[scale=1.5]
\node (A) at (0,0) {$\A$};
\node (B) at (1,0) {$\X$};
\node (C) at (0,-1) {$\A'$};
\node (D) at (1,-1) {$\X'$};
\node (E) at (0.5,-0.5) {$\alpha \NEarrow$};
\path[->,font=\scriptsize,>=angle 90]
(A) edge node[above]{$L$} (B)
(B) edge node [right]{$F_1$} (D)
(C) edge node [below] {$L'$} (D)
(A)edge node[left]{$F_0$}(C);
\end{tikzpicture}
\]
where $\X$ and $\X'$ have chosen pushouts, $F_1$ preserves pushouts and $\alpha$ is a natural isomorphism. 
Then there is a double functor 
$\lF \maps {}_L \lCsp(\X) \to {}_{L'} \lCsp(\X')$ such that:
\begin{itemize}
\item $\lF_0 = F_0$.
\item $\lF_1$ acts as follows on objects:
\[
\begin{tikzpicture}[scale=1.5]
\node (A) at (0,0) {$L(a)$};
\node (B) at (0.75,0) {$x$};
\node (C) at (1.5,0) {$L(b)$};
\path[->,font=\scriptsize,>=angle 90]
(A) edge node[above]{$i$} (B)
(C)edge node[above]{$o$}(B);
\end{tikzpicture}
\raisebox{0.5em}{$\quad \mapsto \quad$}
\begin{tikzpicture}[scale=1.5]
\node (A) at (0,0) {$L'(F_0(a))$};
\node (B) at (1.5,0) {$F_1(x)$};
\node (C) at (3,0) {$L'(F_0(b))$};
\path[->,font=\scriptsize,>=angle 90]
(A) edge node[above]{$ F_1(i) \alpha_a$} (B)
(C)edge node[above]{$ F_1(o)\alpha_b $}(B);
\end{tikzpicture}
\]
and as follows on morphisms:
\[
\begin{tikzpicture}[scale=1.5]
\node (A) at (0,0) {$L(a)$};
\node (B) at (0.75,0) {$x$};
\node (C) at (1.5,0) {$L(b)$};
\node (A') at (0,-1) {$L(a')$};
\node (B') at (0.75,-1) {$x'$};
\node (C') at (1.5,-1) {$L(b')$};
\path[->,font=\scriptsize,>=angle 90]
(A) edge node[above]{$i$} (B)
(C)edge node[above]{$o$}(B)
(A') edge node[above]{$i'$} (B')
(C')edge node[above]{$o'$}(B')
(C)edge node[right]{$L(g)$}(C')
(B)edge node[left]{$\gamma$}(B')
(A)edge node[left]{$L(f)$}(A');
\end{tikzpicture}
\raisebox{2.75em}{$\quad \mapsto \quad$}
\begin{tikzpicture}[scale=1.5]
\node (A) at (0,0) {$L'(F_0(a))$};
\node (B) at (1.5,0) {$F_1(x)$};
\node (C) at (3,0) {$L'(F_0(b))$};
\node (A') at (0,-1) {$L'(F_0(a'))$};
\node (B') at (1.5,-1) {$F_1(x')$};
\node (C') at (3,-1) {$L'(F_0(b'))$};
\path[->,font=\scriptsize,>=angle 90]
(A) edge node[above]{$ F_1(i) \alpha_a$} (B)
(C)edge node[above]{$ F_1(o)\alpha_b $}(B)
(A') edge node[above]{$ F_1(i') \alpha_{a'}$} (B')
(C')edge node[above]{$ F_1(o')\alpha_{b'}$}(B')
(A)edge node[left]{$ L'(F_0(f)) $}(A')
(B)edge node[left]{$ F_1(\gamma)$}(B')
(C)edge node[right]{$L'(F_0(g))$}(C');
\end{tikzpicture}
\]
\item Given composable structured cospans in ${}_L \lCsp(\X)$:
\[
\begin{tikzpicture}[scale=1.5]
\node (A) at (0,0) {$L(a)$};
\node (B) at (1,0) {$x$};
\node (C) at (2,0) {$L(b)$};
\node (A') at (3,0) {$L(b)$};
\node (B') at (4,0) {$y$};
\node (C') at (5,0) {$L(c)$};
\path[->,font=\scriptsize,>=angle 90]
(A) edge node[above]{$i$} (B)
(C) edge node [above]{$o$} (B)
(A') edge node[above]{$i'$} (B')
(C') edge node [above]{$o'$} (B');
\end{tikzpicture}
\]
the natural isomorphism $\lF_{\odot} \colon \lF_1(M) \odot \lF_1(N) \to \lF_1(M \odot N)$ is given by this map of cospans:
\[
\begin{tikzpicture}[scale=1.5]
\node (A) at (0,0) {$L'(F_0(a))$};
\node (B) at (3,0) {$F_1(x)+_{L'(F_0(b))}F_1(y)$};
\node (C) at (6,0) {$L'(F_0(c))$};
\node (A') at (0,-1) {$L'(F_0(a))$};
\node (B') at (3,-1) {$F_1(x+_{L(b)}y)$};
\node (C') at (6,-1) {$L'(F_0(c))$};
\path[->,font=\scriptsize,>=angle 90]
(A) edge node[above]{$\Psi j_{F_1(x)} F_1(i)\alpha_a$} (B)
(C) edge node [above]{$\Psi j_{F_1(y)} F_1(o')\alpha_a$} (B)
(A') edge node[above]{$F_1(\psi j_x i)\alpha_c$} (B')
(C') edge node [above]{$F_1(\psi j_y o')\alpha_c$} (B')
(A) edge node [left]{$1$} (A')
(B) edge node [left]{$\phi_{M,N}$} (B')
(C) edge node [right]{$1$} (C');
\end{tikzpicture}
\]
Here $j_x \colon x \to x+y$ is the natural map into a coproduct, and likewise for $j_y,j_{F_1(x)},j_{F_1(y)}$, $\psi \maps x+y \to x+_{L(b)}y$ is the natural map from a coproduct to a pushout and likewise for $\Psi$, and $\phi_{M,N} \maps F_1(x)+_{L'(F_0(b))}F_1(y) \to F_1(x+_{L(b)}y)$ is given by the composite 
\[  F_1(x)+_{L'(F_0(b))}F_1(y) \xrightarrow{\id +_{\alpha_b} \id} F_1(x) +_{F_1(L(b))} F_1(y) \xrightarrow{\kappa} F_1(x+_{L(b)}y)
\]  
where $\kappa$ is the natural isomorphism arising from $F_1$ preserving pushouts. 
\item Given an object $a \in A$, the natural isomorphism $\lF_U \colon U'(\lF_0(a)) \to \lF_1(U(a))$ is given by this map of cospans:
\[
\begin{tikzpicture}[scale=1.5]
\node (A) at (0,0) {$L'(F_0(a))$};
\node (B) at (1.5,0) {$L'(F_0(a))$};
\node (C) at (3,0) {$L'(F_0(a))$};
\node (A') at (0,-1) {$L'(F_0(a))$};
\node (B') at (1.5,-1) {$F_1(L(a))$};
\node (C') at (3,-1) {$L'(F_0(a))$};
\path[->,font=\scriptsize,>=angle 90]
(A) edge node[above]{$1$} (B)
(C) edge node [above]{$1$} (B)
(A') edge node[above]{$\alpha_a$} (B')
(C') edge node [above]{$\alpha_a$} (B')
(A) edge node [left]{$1$} (A')
(B) edge node [left]{$\alpha_a$} (B')
(C) edge node [right]{$1$} (C');
\end{tikzpicture}
\]
\end{itemize}
\end{thm}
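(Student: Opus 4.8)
The plan is to mirror the two-step construction of ${}_L\lCsp(\X)$ itself. Recall that in Theorem~\ref{thm:LCsp(X)} this double category was obtained by first forming $\lCsp(\X)$ as in Lemma~\ref{lem:dubcsp} and then applying the base-change construction of Lemma~\ref{trick1} along $L \maps \A \to \lCsp(\X)_0 = \X$. Correspondingly I would first produce a double functor $\lCsp(\X) \to \lCsp(\X')$ out of the pushout-preserving functor $F_1$, and then transport it through the base change determined by $F_0$ and the natural isomorphism $\alpha$.

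For the first step, a pushout-preserving functor $F_1 \maps \X \to \X'$ determines a double functor $\lCsp(F_1) \maps \lCsp(\X) \to \lCsp(\X')$ that acts as $F_1$ on objects and vertical $1$-morphisms, sends a cospan $x_1 \xrightarrow{i} y \xleftarrow{o} x_2$ to $F_1(x_1) \xrightarrow{F_1(i)} F_1(y) \xleftarrow{F_1(o)} F_1(x_2)$, and acts by $F_1$ on the three maps comprising a $2$-morphism. Since $F_1$ carries the identity cospan on $z$ to the identity cospan on $F_1(z)$, its unit comparison is an identity, while its composition comparison is the canonical isomorphism $\kappa \maps F_1(x) +_{F_1(z)} F_1(y) \toiso F_1(x +_z y)$ arising from $F_1$ preserving pushouts. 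The hexagon and square identities of Definition~\ref{defn:double_functor} for this data reduce to uniqueness statements for the universal property of pushouts and belong to the folklore cited for Lemma~\ref{lem:dubcsp}.

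For the second step, I would establish the functorial analogue of Lemma~\ref{trick1}: given a double functor $G \maps \lX \to \lX'$ between double categories in which every vertical isomorphism has a companion, a functor $F_0 \maps \A \to \A'$, and a natural isomorphism $\alpha \maps L' \circ F_0 \To G_0 \circ L$ (where $G_0 \maps \lX_0 \to \lX'_0$ is the induced functor on categories of objects), there is a double functor ${}_L\lX \to {}_{L'}\lX'$ that is $F_0$ on objects and vertical $1$-morphisms and that sends a horizontal $1$-cell $M \maps L(a) \to L(b)$ to the horizontal composite of the companion of $\alpha_a$, then $G_1(M)$, then the conjoint of $\alpha_b$. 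Since $\lCsp(\X)$ and $\lCsp(\X')$ are their own horizontal opposites and have companions for all vertical morphisms, this applies to $G = \lCsp(F_1)$, $G_0 = F_1$, and the given $\alpha$; unwinding the composites one sees that the resulting double functor $\lF$ is precisely the one described in the statement. Concretely, on a structured cospan $L(a) \xrightarrow{i} x \xleftarrow{o} L(b)$ the legs become $F_1(i)\alpha_a$ and $F_1(o)\alpha_b$ (well-definedness on $2$-morphisms being exactly naturality of $\alpha$), the comparison $\lF_\odot$ unwinds to $\phi_{M,N} = \kappa \circ (\id +_{\alpha_b} \id)$, and $\lF_U$ unwinds to the $2$-morphism that is $\alpha_a$ on the apex and identities on the feet. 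That these are genuine maps of cospans, and that $\lF_\odot$ and $\lF_U$ are natural isomorphisms, is immediate once one recalls the definitions of the pushout injections and that $\alpha_a$ and $\kappa$ are isomorphisms.

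The routine part of this is all the bookkeeping above; the one place that needs care is the verification of the hexagon identity for $\lF_\odot$ (compatibility with the associators of the two structured cospan double categories) and the two square identities for $\lF_U$ (compatibility with the unitors). Because those associators and unitors are defined purely by the universal property of pushouts, and every comparison map in play---$\phi_{M,N}$, $\kappa$, the canonical maps $\psi,\Psi$ from coproducts to pushouts, and the pushout injections $j_x$---is canonical, each identity follows by checking that two cocones induce the same mediating morphism out of a pushout. This is a finite but somewhat lengthy diagram chase and is the main obstacle; once it is dispatched, the functorial analogue of Lemma~\ref{trick1} produces $\lF$ and the proof is complete.
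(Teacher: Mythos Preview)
Your approach is correct but genuinely different from the paper's. The paper gives a direct verification: it simply checks that the square defining $\lF_\odot$ commutes via the one-line calculation
\[
  F_1(\psi j_x i)\alpha_a \;=\; F_1(\psi)\, F_{1_{x,y}}\, j_{F_1(x)} F_1(i)\alpha_a \;=\; \phi_{M,N}\, \Psi\, j_{F_1(x)} F_1(i)\alpha_a,
\]
and then asserts that $\lF_\odot$ and $\lF_U$ satisfy the hexagon and unit squares. Your two-step plan---first $\lCsp(F_1)$, then a functorial version of Lemma~\ref{trick1} via companions and conjoints of the components of $\alpha$---is more conceptual and explains \emph{why} the formulas in the statement arise. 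Two small caveats. First, the companion/conjoint composite $\widehat{\alpha_a}\odot \lCsp(F_1)_1(M)\odot \check{\alpha_b}$ has apex a chosen iterated pushout, not literally $F_1(x)$; so ``precisely'' should be ``canonically isomorphic via the unitors,'' and you either fold those unitors into the definition or exhibit an isomorphism of double functors to the stated $\lF$. Second, your general base-change lemma is not free: to get a double functor you still need the naturality of $\alpha$ (for well-definedness on $2$-morphisms, which you note) and the companion/conjoint equations (for the coherence axioms), so the ``lengthy diagram chase'' you flag does not disappear---it migrates into the proof of the general lemma. The payoff of your route is a reusable statement; the paper's route is shorter because it skips the abstraction and simply asserts the coherence check.
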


\begin{proof}
The diagram in the definition of $\lF_\odot$ commutes as 
\[  F_1(\psi j_x i)\alpha_a = F_1(\psi) F_{1_{x,y}} j_{F_1(x)}F_1(i)\alpha_a =  \phi_{M,N} \Psi j_{F_1(x)} F_1(i)\alpha_a\]
where $F_{1_{x,y}} \maps F_1(x)+F_1(y) \to F_1(x+y)$ is the natural isomorphism arising from $F_1$ preserving binary coproducts. One can check that the natural isomorphisms $\lF_\odot$ and $\lF_U$ satisfy the left and right unit squares and laxator hexagon of a monoidal functor.
\end{proof}

\begin{thm}
\label{thm:symmetric_monoidal_double_functor}
Suppose we have a square commuting up to isomorphism in $\Rex$:
\[
\begin{tikzpicture}[scale=1.5]
\node (A) at (0,0) {$\A$};
\node (B) at (1,0) {$\X$};
\node (C) at (0,-1) {$\A'$};
\node (D) at (1,-1) {$\X'$};
\node (E) at (0.5,-0.5) {$\alpha \NEarrow$};
\path[->,font=\scriptsize,>=angle 90]
(A) edge node[above]{$L$} (B)
(B) edge node [right]{$F_1$} (D)
(C) edge node [above] {$L'$} (D)
(A)edge node[left]{$F_0$}(C);
\end{tikzpicture}
\]
Then the double functor $\lF \colon {}_L \lCsp(\X) \to {}_{L'} \lCsp(\X')$ is a weak functor between weak category objects in $\Rex$.   Moreover, if we make $ {}_L \lCsp(\X)$ and $ {}_{L'} \lCsp(\X')$ into symmetric monoidal double categories as in Theorem \ref{thm:LCsp(X) symmetric}, then $\lF \maps {_L \lCsp(\X)} \to {_{L'} \lCsp(\X')}$ can be given the structure of a symmetric monoidal double functor.
\end{thm}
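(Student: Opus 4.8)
The plan is to recycle the three-step strategy used to prove Theorem \ref{thm:LCsp(X) symmetric}: first show that the double functor $\lF$ of Theorem \ref{thm:double_functor} is a weak functor between ${}_L\lCsp(\X)$ and ${}_{L'}\lCsp(\X')$ regarded as weak category objects in $\Rex$; then push it through the $2$-functor $\Phi \maps \Rex \to \SMC$ to obtain a weak functor between weak categories in $\SMC$; and finally recognise the result as a symmetric monoidal double functor in the sense of Definition \ref{defn:monoidal_double_functor}, modulo the same strict-versus-strong caveat Shulman describes \cite{Shul2010}.

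For the first step I would check that $\lF_0 = F_0$ and $\lF_1$ are right exact. For $\lF_0$ this is the hypothesis; for $\lF_1$ it holds because colimits in both arrow categories are computed pointwise in $\X$ and $\X'$, exactly as in the proof of Theorem \ref{thm:rex}, while $F_1$, $L$ and $L'$ preserve finite colimits. The source and target functors are preserved strictly, straight from the formula for $\lF_1$ in Theorem \ref{thm:double_functor}; and since the pullbacks ${}_L\lCsp(\X)_1 \times_{{}_L\lCsp(\X)_0} {}_L\lCsp(\X)_1$ and their iterates are pullbacks in $\Rex$ (Theorem \ref{thm:rex}), the induced functors $\lF_1 \times_{\lF_0} \lF_1$ are right exact by the universal property. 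The structure transformations $\lF_\odot$ and $\lF_U$ are natural isomorphisms, hence automatically $2$-morphisms in $\Rex$, and the hexagon and square identities they satisfy are equalities of natural transformations already verified in $\Cat$ in Theorem \ref{thm:double_functor}, so they persist in $\Rex$. This makes $\lF$ a weak functor between weak category objects in $\Rex$. Applying $\Phi$ then gives the second step for free: by Theorem \ref{thm:SMC} the $2$-functor $\Phi$ carries ${}_L\lCsp(\X)$ and ${}_{L'}\lCsp(\X')$ to weak categories in $\SMC$, and since $\Phi$ is a $2$-functor it carries $\lF$ to a weak functor between them, with $\Phi(F_0)$ and $\Phi(\lF_1)$ strong symmetric monoidal (a right exact functor preserves the chosen finite coproducts up to canonical coherent isomorphism) and $\Phi(\lF_\odot)$, $\Phi(\lF_U)$ monoidal natural isomorphisms.

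For the last step I would argue, exactly as in the passage from Theorem \ref{thm:SMC} to Theorem \ref{thm:LCsp(X) symmetric}, that a weak functor in $\SMC$ between weak categories in $\SMC$ is precisely the data of a symmetric monoidal double functor, the only slack being that $\Phi(F_0)$ and $\Phi(\lF_1)$ are strong rather than strict monoidal. This slack is harmless: the monoidal structures on $\A$, $\A'$, ${}_L\lCsp(\X)_1$ and ${}_{L'}\lCsp(\X')_1$ all come from the chosen finite coproducts, so the source and target functors on both double categories are strict monoidal and the equations $\lF_0 S = S' \lF_1$, $\lF_0 T = T' \lF_1$ hold compatibly with the monoidal structure; combined with the fact that the associators, unitors, braidings and syllepses were built by universal properties of coproducts and initial objects in Theorem \ref{thm:LCsp(X) symmetric}, this forces all the coherence diagrams of Definition \ref{defn:monoidal_double_functor} to commute. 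I expect this final bookkeeping to be the only real obstacle — namely checking that the laxators of $\Phi(\lF_1)$ and of $\Phi(F_0)$ interact correctly with $\lF_\odot$, $\lF_U$ and the coproduct-induced coherence isomorphisms — but it is routine in the same way as the diagram chases suppressed in the proof of Theorem \ref{thm:SMC}.
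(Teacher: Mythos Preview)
Your proposal is sound, and in fact it says much more than the paper does: the paper's entire proof of this theorem is the single sentence ``This is a straightforward but lengthy verification.'' So there is no mismatch to flag---you have supplied a conceptual outline where the paper simply defers to a direct check.

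The route you sketch---lifting $\lF$ to a weak functor in $\Rex$, pushing through $\Phi$, and then reconciling the result with Definition \ref{defn:monoidal_double_functor}---is exactly parallel to the paper's own proof of Theorem \ref{thm:LCsp(X) symmetric}, so it is a natural strategy and the paper's machinery supports it. Your identification of the only nontrivial point (that $\Phi(\lF_1)$ and $\Phi(F_0)$ are strong rather than strict monoidal, and that one must check their laxators interact correctly with $\lF_\odot$, $\lF_U$) is accurate; this is the same ``strict versus strong'' issue Shulman raises and that the paper handles in the passage from Theorem \ref{thm:SMC} to Theorem \ref{thm:LCsp(X) symmetric}. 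The alternative the paper implicitly has in mind is simply to unwind Definition \ref{defn:monoidal_double_functor} and verify each coherence diagram by hand using the explicit formulas in Theorem \ref{thm:double_functor} and the universal property of coproducts---less elegant, but avoiding any appeal to $\Phi$ preserving the relevant pullbacks.
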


\begin{proof}
This is a straightforward but lengthy verification.
\end{proof}

We can then water down this result, obtaining maps between symmetric monoidal bicategories or categories:

\begin{thm}
\label{thm:symmetric_monoidal_functor_bicategories}
A symmetric monoidal double functor $\lF \maps {}_L \lCsp(\X) \to {}_{L'} \lCsp(\X')$ induces
a symmetric monoidal functor $\mathbf{F} \maps {}_L \bCsp(\X) \to {}_{L'} \bCsp(\X')$.
\end{thm}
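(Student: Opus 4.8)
The plan is to show that the construction of a bicategory from a double category, recalled in the proof of Corollary~\ref{cor:LCsp(X) bicategory}, is functorial and respects symmetric monoidal structure, and then to apply this to the double functor $\lF$ produced by Theorem~\ref{thm:symmetric_monoidal_double_functor}.

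First I would check that an arbitrary double functor $\lG \maps \lX \to \lX'$ induces a pseudofunctor $\bG \maps \bX \to \bX'$ between the associated bicategories: on objects $\bG = \lG_0$, and on $1$-morphisms (horizontal $1$-cells) and $2$-morphisms (globular $2$-morphisms) $\bG$ is given by $\lG_1$. The one thing to verify is that $\lG_1$ carries globular $2$-morphisms to globular $2$-morphisms; this holds because $\lG$ preserves sources and targets strictly, $S' \circ \lG_1 = \lG_0 \circ S$ and $T' \circ \lG_1 = \lG_0 \circ T$, and $\lG_0$, being a functor, preserves identity morphisms. The compositor and unitor of $\bG$ as a pseudofunctor are precisely the structure $2$-isomorphisms $\lG_\odot$ and $\lG_U$ from the definition of a weak functor, and the coherence axioms of a pseudofunctor are exactly the hexagon and square identities these satisfy.

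Next I would observe that when $\lX$ and $\lX'$ are symmetric monoidal double categories and $\lG$ is a symmetric monoidal double functor, the additional comparison data (for the tensor product and the unit object, together with the symmetry constraint) consists entirely of globular $2$-morphisms and equations among them, so it transports verbatim to make $\bG$ a symmetric monoidal pseudofunctor. This is part of Shulman's construction \cite{Shul2010}, which is visibly functorial on isofibrant symmetric monoidal double categories; here one uses that both ${}_L \lCsp(\X)$ and ${}_{L'} \lCsp(\X')$ were shown to be isofibrant in the proof of Corollary~\ref{cor:LCsp(X) symmetric bicategory}, and that any double functor automatically respects companions, so $\lF$ sends the companion of $f$ to (an object canonically isomorphic to) the companion of $\lF_0(f)$ used to build the monoidal structure on the target bicategory.

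Finally, applying this to the symmetric monoidal double functor $\lF \maps {}_L \lCsp(\X) \to {}_{L'} \lCsp(\X')$ yields the symmetric monoidal functor $\mathbf{F} \maps {}_L \bCsp(\X) \to {}_{L'} \bCsp(\X')$, whose action on objects is $F_0$ and whose action on structured cospans and their globular morphisms is given by the formulas displayed in Theorem~\ref{thm:double_functor}, with monoidal comparison $2$-cells assembled from the coproduct-preservation isomorphisms of $F_0$ and $F_1$. I expect the only real work to be bookkeeping: matching Shulman's packaging of the coherence data of a symmetric monoidal double functor against that of a symmetric monoidal pseudofunctor, and confirming that discarding the non-globular $2$-morphisms loses none of the relevant structure, which it does not since the globular $2$-morphisms already carry all of the operations used.
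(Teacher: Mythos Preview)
Your proposal is correct and follows the same route as the paper: both rely on the functoriality of Shulman's passage from isofibrant symmetric monoidal double categories to symmetric monoidal bicategories. The paper's own proof is in fact nothing more than a citation to Hansen and Shulman \cite{HS}, who make this functoriality precise; your sketch is therefore more detailed than what the paper provides, but it is the same argument, and your reference to \cite{Shul2010} should be supplemented by \cite{HS}, which is where the functoriality of the construction (as opposed to the construction itself) is actually worked out.
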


\begin{proof}
See Hansen and Shulman \cite{HS} for details of how this works, and a proof.
\end{proof}

\begin{thm}
\label{thm:symmetric_monoidal_functor_ategories}
A symmetric monoidal functor between bicategories $\mathbf{F} \maps {}_L \bCsp(\X) \to {}_{L'} \bCsp(\X')$ induces a symmetric monoidal functor between categories $F \maps {}_L \Csp(\X) \to {}_{L'} \Csp(\X')$.
\end{thm}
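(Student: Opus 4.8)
The plan is to mirror the proof of Corollary \ref{cor:LCsp(X) symmetric category}, upgrading its object-level statement to one about morphisms. There we noted that any symmetric monoidal bicategory $\B$ decategorifies to a symmetric monoidal category $\mathsf{B}$ by keeping the objects and passing to isomorphism classes of 1-morphisms; by Corollary \ref{_L Csp(X) category} the category ${}_L \Csp(\X)$ is precisely the decategorification of the bicategory ${}_L \bCsp(\X)$. So it suffices to show that decategorification is functorial: a (weak) symmetric monoidal functor $\mathbf{F} \maps \B \to \B'$ between symmetric monoidal bicategories induces a symmetric monoidal functor $F \maps \mathsf{B} \to \mathsf{B}'$. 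Applying this to $\mathbf{F} \maps {}_L \bCsp(\X) \to {}_{L'} \bCsp(\X')$ then gives the theorem.

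First I would define $F$: it agrees with $\mathbf{F}$ on objects, and sends the isomorphism class of a 1-morphism $m$ to the class of $\mathbf{F}(m)$. This is well defined because a pseudofunctor carries invertible 2-morphisms to invertible 2-morphisms, so $m \cong m'$ in $\B$ forces $\mathbf{F}(m) \cong \mathbf{F}(m')$ in $\B'$. Next I would check functoriality. Although $\mathbf{F}$ preserves composition and identities only up to the invertible 2-morphisms $\mathbf{F}_{g,f} \maps \mathbf{F}(g)\mathbf{F}(f) \To \mathbf{F}(g\circ f)$ and $\mathbf{F}_a \maps \id_{\mathbf{F}(a)} \To \mathbf{F}(\id_a)$, on isomorphism classes these become equalities, so $F([g]\circ[f]) = F([g])\circ F([f])$ and $F(\id_a) = \id_{F(a)}$.

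Then I would equip $F$ with its symmetric monoidal structure. The symmetric monoidal pseudofunctor $\mathbf{F}$ comes with a pseudonatural adjoint equivalence $\mathbf{F}(x)\otimes\mathbf{F}(y) \simeq \mathbf{F}(x\otimes y)$, an equivalence $I' \simeq \mathbf{F}(I)$, and invertible modifications expressing compatibility with the associators, unitors, and braidings. Passing to isomorphism classes turns the pseudonatural equivalence into an ordinary natural isomorphism $F(x)\otimes F(y) \cong F(x\otimes y)$, the unit equivalence into an isomorphism $I' \cong F(I)$, and each coherence modification into an equality of the relevant composites of isomorphisms in $\mathsf{B}'$. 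Checking the axioms of a symmetric monoidal functor is then automatic: every diagram required to commute is a diagram of isomorphisms that was already forced to commute (up to an invertible modification) in $\B'$.

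The main obstacle is bookkeeping rather than substance: one must confirm that the coherence modifications of a symmetric monoidal pseudofunctor---especially the one relating $\mathbf{F}$'s compositor to the tensorator, and those governing the braiding and syllepsis---really do collapse to the equations needed for a symmetric monoidal functor. Once decategorification is organized as a functor from symmetric monoidal bicategories and their symmetric monoidal functors to symmetric monoidal categories and their symmetric monoidal functors, the theorem follows immediately, just as Corollary \ref{cor:LCsp(X) symmetric category} did.
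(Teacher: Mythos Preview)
Your proposal is correct and follows exactly the approach the paper takes: the paper's proof is the single sentence ``This is a straightforward decategorification process,'' and you have simply unpacked what that process is, in the spirit of Corollary~\ref{cor:LCsp(X) symmetric category}. There is nothing to add.
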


\begin{proof}
This is a straightforward decategorification process. 
\end{proof}

\section{Structured versus decorated cospans}
\label{sec:decorated}

We can illustrate some of the advantages of structured over decorated categories with an 
example that is fundamental in the study of networks: the double category with open graphs
as morphisms.   An `open graph' consists of a graph together with maps from two sets into its set of 
nodes:
\[
\scalebox{0.8}{
\begin{tikzpicture}
	\begin{pgfonlayer}{nodelayer}
		\node [contact] (I) at (-2,1) {$\bullet$};
		\node [style = none] at (-2,1.3) {$n_1$};
		\node [contact] (T) at (-2,-1) {$\bullet$};
		\node [style = none] at (-2,-1.3) {$n_2$};
		\node [contact] (W) at (0,0) {$\bullet$};
		\node [style = none] at (0.3,0.3) {$n_3$};
		\node [contact] (Water) at (1.5,0) {$\bullet$};
		\node [style = none] at (1.5,0.3) {$n_4$};
		
		\node [style = none] at (-0.9,1.2) {$e_1$};
		\node [style = none] at (-1,-0.3) {$e_2$};
		\node [style = none] at (-0.9,-1.2) {$e_3$};
		\node [style = none] at (0.7,-0.3) {$e_4$};
		
		\node [style=none] (1) at (-3,1) {1};
		\node [style=none] (2'') at (-3,0.3) {2};
		\node [style=none] (2) at (-3,-1) {3};

		\node [style=none] (ATL) at (-3.4,1.4) {};
		\node [style=none] (ATR) at (-2.6,1.4) {};
		\node [style=none] (ABR) at (-2.6,-1.4) {};
		\node [style=none] (ABL) at (-3.4,-1.4) {};

		\node [style=none] (X) at (-3,1.8) {$S$};

		\node [style=inputdot] (inI) at (-2.8,1) {};
		\node [style=inputdot] (inS) at (-2.8,-1) {};
		\node [style=inputdot] (inI') at (-2.8,0.3) {};
		\node [style=none] (Z) at (2.75,1.8) {$T$};
		\node [style=none] (1'') at (2.75,0) {4};
		\node [style=none] (MTL) at (2.35,1.4) {};
		\node [style=none] (MTR) at (3.15,1.4) {};
		\node [style=none] (MBR) at (3.15,-1.4) {};
		\node [style=none] (MBL) at (2.35,-1.4) {};
		\node [style=inputdot] (outI') at (2.55,0) {};

	\end{pgfonlayer}
	\begin{pgfonlayer}{edgelayer}
		\draw [style=inarrow, bend right=40, looseness=1.00] (I) to (W);
		\draw [style=inarrow, bend left=40, looseness=1.00] (I) to (W);
		\draw [style=inarrow, bend right=40, looseness=1.00] (T) to (W);
		\draw [style=inarrow] (W) to (Water);
		\draw [style=simple] (ATL.center) to (ATR.center);
		\draw [style=simple] (ATR.center) to (ABR.center);
		\draw [style=simple] (ABR.center) to (ABL.center);
		\draw [style=simple] (ABL.center) to (ATL.center);
		\draw [style=simple] (MTL.center) to (MTR.center);
		\draw [style=simple] (MTR.center) to (MBR.center);
		\draw [style=simple] (MBR.center) to (MBL.center);
		\draw [style=simple] (MBL.center) to (MTL.center);
		\draw [style=inputarrow] (inI) to (I);
		\draw [style=inputarrow] (inI') to (I);
		\draw [style=inputarrow] (inS) to (T);
		\draw [style=inputarrow] (outI') to (Water);
	\end{pgfonlayer}
\end{tikzpicture}
}
\]
 As usual in category theory, by `graph' we mean a directed multigraph or quiver. In what follows we  restrict attention to finite graphs because these are the most important in applications.  

\begin{defn} 
A \define{graph} is a pair of functions $s,t\maps E \to N$ where $E$ and $N$ are finite sets.   We call elements of $E$ \define{edges} and elements of $N$ \define{nodes}.  We say that the edge $e \in E$ has \define{source} $s(e)$ and \define{target} $t(e)$, and say that $e$ is an edge \define{from} $s(e)$ \define{to} $t(e)$.  A \define{morphism} from the graph $s,t\maps E \to N$ to the graph $s',t' \maps E' \to N'$ is a pair of functions $f \maps E \to E', g \maps N \to N'$ such that these diagrams commute:
\[
\begin{tikzpicture}[scale=1.5]
\node (A) at (0,0) {$E$};
\node (A') at (0,-1) {$E'$};
\node (B) at (1,0) {$N$};
\node (B') at (1,-1) {$N'$};
\path[->,font=\scriptsize,>=angle 90]
(A) edge node[above]{$s$} (B)
(A') edge node[above]{$s'$} (B')
(A) edge node[left]{$f$} (A')
(B) edge node[right]{$g$} (B');

\node (C) at (2,0) {$E$};
\node (C') at (2,-1) {$E'$};
\node (D) at (3,0) {$N$};
\node (D') at (3,-1) {$N'$.};
\path[->,font=\scriptsize,>=angle 90]
(C) edge node[above]{$t$} (D)
(C') edge node[above]{$t'$} (D')
(C) edge node[left]{$f$} (C')
(D) edge node[right]{$g$} (D');
\end{tikzpicture}
\]
\end{defn}

\begin{defn}
\label{defn:Graph}
Let $\Graph$ be the category of graphs and morphisms between them, 
with composition defined by 
\[  (f, g) \circ (f',g') = (f \circ f' , g \circ g')  .\]
\end{defn}

There is a functor $U \maps \Graph \to \Fin\Set$ that takes a graph $s,t \maps E \to N$ to its underlying set of nodes $N$. This has a left adjoint $L \maps \Fin\Set \to \Graph$ sending any set to the graph with that set of nodes and no edges.  Both $\Fin\Set$ and $\Graph$ have finite colimits, and $L$, being a left adjoint, preserves them.  Thus Theorem \ref{thm:LCsp(X) symmetric} gives us a symmetric monoidal double category $_L \lCsp(\Graph)$ where:
\begin{itemize}
\item an object is a finite set,
\item a vertical 1-morphism is a function between finite sets,
\item a horizontal 1-cell from $S$ to $T$ is an \define{open graph}, meaning a
cospan in $\Graph$ of this form:
\[
\begin{tikzpicture}[scale=1.5]
\node (A) at (0,0) {$L(S)$};
\node (B) at (1,0) {$G$};
\node (C) at (2,0) {$L(T)$,};
\path[->,font=\scriptsize,>=angle 90]
(A) edge node[above,left]{$$} (B)
(C)edge node[above]{$$}(B);
\end{tikzpicture}
\]
\item a 2-morphism is a \define{map of open graphs}, meaning
a commutative diagram in $\Graph$ of this form:
\[
\begin{tikzpicture}[scale=1.5]
\node (E) at (3,0) {$L(S)$};
\node (F) at (5,0) {$L(T)$};
\node (G) at (4,0) {$G$};
\node (E') at (3,-1) {$L(S')$};
\node (F') at (5,-1) {$L(T')$};
\node (G') at (4,-1) {$G'$};
\path[->,font=\scriptsize,>=angle 90]
(F) edge node[above]{$$} (G)
(E) edge node[left]{$L(f)$} (E')
(F) edge node[right]{$L(g)$} (F')
(G) edge node[left]{$h$} (G')
(E) edge node[above]{$$} (G)
(E') edge node[below]{$$} (G')
(F') edge node[below]{$$} (G');
\end{tikzpicture}
\]
\end{itemize} 
Applying Corollary \ref{cor:LCsp(X) symmetric bicategory} we obtain a symmetric monoidal bicategory ${}_L \bCsp(\Graph)$ where the objects are finite sets, the morphisms are open graphs, and the
2-morphisms are commutative diagrams in $\Graph$ of this form:
\[
\begin{tikzpicture}[scale=1.5]
\node (E) at (3,-0.5) {$L(S)$};
\node (F) at (5,-0.5) {$L(T)$};
\node (G) at (4,0) {$G$};
\node (G') at (4,-1) {$G'$};
\path[->,font=\scriptsize,>=angle 90]
(F) edge node[above]{$o$} (G)
(G) edge node[left]{$h$} (G')
(E) edge node[above]{$i$} (G)
(E) edge node[below]{$i'$} (G')
(F) edge node[below]{$o'$} (G');
\end{tikzpicture}
\]

We can go further and apply Corollary \ref{cor:LCsp(X) symmetric category} to obtain a
symmetric monoidal category ${}_L \Csp(\Graph)$ where the objects are finite sets
and the morphisms are \emph{isomorphism classes} of open graphs.   An isomorphism of
open graphs is a diagram as above where $h$ is an isomorphism.  Below is a pair of isomorphic open graphs.
\[
\scalebox{0.8}{
\begin{tikzpicture}
	\begin{pgfonlayer}{nodelayer}
		\node [contact] (n1) at (-2,0) {$\bullet$};
		\node [style = none] at (-2.1,0.3) {$n_1$};
		\node [contact] (n2) at (0,1) {$\bullet$};
		\node [style = none] at (0,1.3) {$n_2$};
		\node [contact] (n3) at (0,-1) {$\bullet$};
		\node [style = none] at (0,-1.3) {$n_3$};
		\node [contact] (n4) at (2,0) {$\bullet$};
		\node [style = none] at (2.1,0.3) {$n_4$};
		
		\node [style = none] at (-1,1) {$e_1$};
		\node [style = none] at (-1,-1) {$e_2$};
		\node [style = none] at (1,1) {$e_3$};
		\node [style = none] at (1,-1) {$e_4$};
	    \node [style = none] at (0.3,0) {$e_5$};
		
		\node [style=none] (1) at (-3,0) {1};
		\node [style=none] (4) at (3,0) {2};
	
		\node [style=none] (ATL) at (-3.4,1.4) {};
		\node [style=none] (ATR) at (-2.6,1.4) {};
		\node [style=none] (ABR) at (-2.6,-1.4) {};
		\node [style=none] (ABL) at (-3.4,-1.4) {};

		\node [style=none] (X) at (-3,1.8) {$S$};
		\node [style=inputdot] (inI) at (-2.8,0) {};
		
		\node [style=none] (Z) at (3,1.8) {$T$};
	 \node [style=inputdot] (outI') at (2.8,0) {};

		\node [style=none] (MTL) at (2.6,1.4) {};
		\node [style=none] (MTR) at (3.4,1.4) {};
		\node [style=none] (MBR) at (3.4,-1.4) {};
		\node [style=none] (MBL) at (2.6,-1.4) {};
	
	\end{pgfonlayer}
	\begin{pgfonlayer}{edgelayer}
		\draw [style=inarrow, bend left=20, looseness=1.00] (n1) to (n2);
		\draw [style=inarrow, bend right=20, looseness=1.00] (n1) to (n3);
		\draw [style=inarrow, bend left=20, looseness=1.00] (n2) to (n4);
		\draw [style=inarrow, bend right=20, looseness=1.00] (n3) to (n4);
		\draw [style=inarrow] (n2) to (n3);
		\draw [style=simple] (ATL.center) to (ATR.center);
		\draw [style=simple] (ATR.center) to (ABR.center);
		\draw [style=simple] (ABR.center) to (ABL.center);
		\draw [style=simple] (ABL.center) to (ATL.center);
		\draw [style=simple] (MTL.center) to (MTR.center);
		\draw [style=simple] (MTR.center) to (MBR.center);
		\draw [style=simple] (MBR.center) to (MBL.center);
		\draw [style=simple] (MBL.center) to (MTL.center);
		\draw [style=inputarrow] (inI) to (n1);
		\draw [style=inputarrow] (outI') to (n4);
	\end{pgfonlayer}
\end{tikzpicture}
}
\]
\[
\scalebox{0.8}{
\begin{tikzpicture}
	\begin{pgfonlayer}{nodelayer}
		\node [contact] (n1) at (-2,0) {$\bullet$};
		\node [style = none] at (-2.1,0.3) {$n_1$};
		\node [contact] (n2) at (0,1) {$\bullet$};
		\node [style = none] at (0,1.3) {$n_2$};
		\node [contact] (n3) at (0,-1) {$\bullet$};
		\node [style = none] at (0,-1.3) {$n_3$};
		\node [contact] (n4) at (2,0) {$\bullet$};
		\node [style = none] at (2.1,0.3) {$n_4$};
		
		\node [style = none] at (-1,1) {$e_1$};
		\node [style = none] at (-1,-1) {$e_2$};
		\node [style = none] at (1,1) {$e_3$};
		\node [style = none] at (1,-1) {$e_4$};
	    \node [style = none] at (0.3,0) {$e_6$};
		
		\node [style=none] (1) at (-3,0) {1};
		\node [style=none] (4) at (3,0) {2};
	
		\node [style=none] (ATL) at (-3.4,1.4) {};
		\node [style=none] (ATR) at (-2.6,1.4) {};
		\node [style=none] (ABR) at (-2.6,-1.4) {};
		\node [style=none] (ABL) at (-3.4,-1.4) {};

		\node [style=none] (X) at (-3,1.8) {$S$};
		\node [style=inputdot] (inI) at (-2.8,0) {};
		
		\node [style=none] (Z) at (3,1.8) {$T$};
	 \node [style=inputdot] (outI') at (2.8,0) {};

		\node [style=none] (MTL) at (2.6,1.4) {};
		\node [style=none] (MTR) at (3.4,1.4) {};
		\node [style=none] (MBR) at (3.4,-1.4) {};
		\node [style=none] (MBL) at (2.6,-1.4) {};
	
	\end{pgfonlayer}
	\begin{pgfonlayer}{edgelayer}
		\draw [style=inarrow, bend left=20, looseness=1.00] (n1) to (n2);
		\draw [style=inarrow, bend right=20, looseness=1.00] (n1) to (n3);
		\draw [style=inarrow, bend left=20, looseness=1.00] (n2) to (n4);
		\draw [style=inarrow, bend right=20, looseness=1.00] (n3) to (n4);
		\draw [style=inarrow] (n2) to (n3);
		\draw [style=simple] (ATL.center) to (ATR.center);
		\draw [style=simple] (ATR.center) to (ABR.center);
		\draw [style=simple] (ABR.center) to (ABL.center);
		\draw [style=simple] (ABL.center) to (ATL.center);
		\draw [style=simple] (MTL.center) to (MTR.center);
		\draw [style=simple] (MTR.center) to (MBR.center);
		\draw [style=simple] (MBR.center) to (MBL.center);
		\draw [style=simple] (MBL.center) to (MTL.center);
		\draw [style=inputarrow] (inI) to (n1);
		\draw [style=inputarrow] (outI') to (n4);
	\end{pgfonlayer}
\end{tikzpicture}
}
\]
These differ only in that the edge $e_5$ has been renamed $e_6$.  We could also rename nodes, but we chose this example for a specific reason.   We can define a similar category of open graphs using the machinery of decorated cospans.  The morphisms in this other category are again equivalence classes of open graphs---but with a finer equivalence relation, for which the above open graphs are \emph{not} equivalent!  Indeed, this other notion of equivalence between open graphs only allows us to rename nodes, not edges.

Now let us compare the decorated cospan category of open graphs.  We shall go into some detail here, since the problems we meet afflict a number of attempted applications of decorated
cospans in the published literature \cite{BF,BFP,BP,Fong2015}.  We start with a functor $F \maps \Fin\Set \to \Set$ that assigns to any finite set $N$ the collection of all \define{graph structures} on $N$, meaning graphs whose set of nodes is $N$.   A small issue immediately presents itself: as described, $F(N)$ is actually a proper class.  We can get around this in various ways.  For example, we can replace $\Fin\Set$ by an equivalent small category, and define a finite graph to be a diagram $s, t \maps E \to N$ in this category.  Henceforth we consider this done.  

The functor $F$ acts on morphisms as follows: given any function $f \maps N \to N'$, we say that $F(f) \maps F(N) \to F(N')$ maps the graph structure $s,t \maps E \to N$ to the graph structure
\[     f \circ s, f \circ t \maps E \to N' .\]
Thus, we use $f$ to rename the nodes and let the edges `go along for the ride'.

To obtain a symmetric monoidal category $F\Cospan$ as described in Section \ref{sec:intro}, we need to make $F$ into a symmetric lax monoidal functor from $(\Fin\Set, +)$ to $(\Set, \times)$.    There is an obvious choice of laxator
\[    \phi_{N,N'} \maps F(N) \times F(N') \to F(N+N')  \]
since there is a natural graph structure on $N+N'$ built from graph structures $s,t \maps E \to N$ and $s',t' \maps E' \to N'$: namely, $s+s', t+t' \maps E + E' \to N + N'$.  However, as pointed out by an anonymous referee in a paper by Moeller and Vasilakopoulou \cite{MV}, this diagram in the definition of lax monoidal functor may fail to commute:
\[
\begin{tikzpicture}[scale=1.5]
\node (A) at (0,0.5) {$(F(N) \times F(N')) \times F(N'')$};
\node (A') at (4.5,0.5) {$F(N) \times (F(N') \times F(N''))$};
\node (B) at (0,-0.25) {$F(N + N') \times F(N'')$};
\node (C) at (4.5,-0.25) {$F(N) \times F(N' + N'')$};
\node (C') at (0,-1) {$F((N + N') + N'')$};
\node (D) at (4.5,-1) {$F(N + (N' + N''))$};
\path[->,font=\scriptsize,>=angle 90]
(A) edge node[above]{$ $} (A')
(A) edge node[left]{$\phi_{N,N'} \times 1$} (B)
(A') edge node[right]{$1 \times \phi_{N',N''}$} (C)
(B) edge node[left]{$\phi_{N + N',N''}$} (C')
(C) edge node [right] {$\phi_{N,N' + N''}$} (D)
(C') edge node [above] {$ $} (D);
\end{tikzpicture}
\]
where the horizontal arrows are the associator in $(\Set,\times)$ and $F$ of the associator in $(\Fin\Set,$ $+)$, respectively.   Suppose we start at upper left with a triple of graph structures $s,t \maps E \to N$, $s',t' \maps E' \to N'$ and $s'',t'' \maps E'' \to N''$.   If we follow the arrows first down and  then across, we obtain a graph structure on $N + (N' + N'')$ where the set of edges is 
$(E + E') + E''$.   If instead we follow the arrows first across and then down, we obtain a graph structure where the set of edges is $E + (E' + E'')$.   These graph structures are different
if $(E + E') + E'' \ne E + (E' + E'')$.  The problem is that $(\Fin\Set, +)$ may not be a strict monoidal category.  We say ``may not" because we have replaced the original $(\Fin\Set,+)$ by an equivalent small category.  

Of course we can use Mac Lane's coherence theorem to choose an equivalent monoidal category that is both small and strict.   One can then prove $F$ becomes lax monoidal with $\phi$ as its laxator---but still not \emph{symmetric} lax monoidal.  The problem is that this diagram fails to commute:
\[
\begin{tikzpicture}[scale=1.5]
\node (A) at (0,0) {$F(N) \times F(N')$};
\node (B) at (2,0) {$F(N') \times F(N)$};
\node (C) at (0,-1) {$F(N+N')$};
\node (D) at (2,-1) {$F(N'+N)$};
\path[->,font=\scriptsize,>=angle 90]
(A) edge node[left]{$\phi_{N,N'}$} (C)
(B) edge node[right]{$\phi_{N',N}$} (D)
(A) edge node[above]{$ $} (B)
(C) edge node[below]{$ $} (D);
\end{tikzpicture}
\]
where the horizontal arrows are the braiding in $(\Set, \times)$ and $F$ of the braiding in $(\Fin\Set, +)$, respectively.    Suppose we start at upper left with a pair of graph structures $s,t \maps E \to N$ and $s',t' \maps E' \to N'$.  If we follow the arrows first down and then across we obtain a graph structure on $N' + N$ where the set of edges is $E+E'$, but if we follow the arrows first across and then down we obtain a graph structure where the set of edges is $E'+E$.  These graph structures are different in general, and we cannot cure this problem with further strictification: $(\Fin\Set,+)$ is not equivalent as a symmetric monoidal category to one that where the braiding is the identity.

As a result, the theory of decorated cospans only gives a monoidal category $F\Cospan$ \cite[Thm.\ 2.1.3]{CourThesis}.  An object of $F\Cospan$ is a finite set, while a morphism is an equivalence class of $F$-decorated cospans
\[
\begin{tikzpicture}[scale=1.5]
\node (A) at (0,0) {$S$};
\node (B) at (1,0) {$N$};
\node (C) at (2,0) {$T$,};
\node (D) at (3,0) {$G \in F(N)$.};
\path[->,font=\scriptsize,>=angle 90]
(A) edge node[above]{$i$} (B)
(C)edge node[above]{$o$}(B);
\end{tikzpicture}
\]
Such an $F$-decorated cospan is a way of describing an open graph from $S$ to $T$.   However, two such $F$-decorated cospans, say the above one and this:
\[
\begin{tikzpicture}[scale=1.5]
\node (A) at (0,0) {$S$};
\node (B) at (1,0) {$N'$};
\node (C) at (2,0) {$T$,};
\node (D) at (3,0) {$G' \in F'(N)$,};
\path[->,font=\scriptsize,>=angle 90]
(A) edge node[above]{$i$} (B)
(C)edge node[above]{$o$}(B);
\end{tikzpicture}
\]
are equivalent iff there is a bijection $f \maps N \to N'$ making this diagram commute:
\[
\begin{tikzpicture}[scale=1.5]
\node (E) at (3,-0.5) {$S$};
\node (F) at (5,-0.5) {$T$};
\node (G) at (4,0) {$N$};
\node (G') at (4,-1) {$N'$};
\path[->,font=\scriptsize,>=angle 90]
(F) edge node[above]{$o$} (G)
(G) edge node[left]{$f$} (G')
(E) edge node[above]{$i$} (G)
(E) edge node[below]{$i'$} (G')
(F) edge node[below]{$o'$} (G');
\end{tikzpicture}
\]
and such that $F(f)(G) = G'$.  It follows that the graphs $G = \left( s,t \maps E \to N \right)$ and $G' = \left( s', t' \maps E' \to N' \right)$ are isomorphic, but in a specific way: we must have $E' = E$, $s' = f \circ s$, and $t' = f \circ  t$.   Thus, two open graphs with different edge sets cannot be equivalent!

In short, the decorated cospan category of open graphs resembles the structured cospan category, but it is merely monoidal, not symmetric monoidal, and it has many morphisms for each morphism in the structured cospan category, for no particularly useful reason.  This `redundancy' is eliminated by the functor $J \maps F\Cospan \to {}_L \Csp(\Graph)$ that is the identity on objects and identifies isomorphic open graphs.  

In attempted applications so far, one often uses a decorated cospan category as the `syntax' for open systems of a particular kind, with the `semantics' given by a monoidal functor out of this category \cite{Fong2016}.  Often this functor factors through a structured cospan category that eliminates the redundancy in the morphisms of the structured cospan category.   We give some examples in the next section.

On the other hand, there are also useful decorated cospan categories that do not suffer from the problems we have described.  Some appear not to be structured cospan categories.  An example is the category of open dynamical systems described in Section \ref{subsec:RxNet}.  Furthermore, the theory of decorated cospans plays an important role in the more general theory of decorated corelations \cite{Fong2018,FongSarazola}. So, it also interesting to see if we can improve the theory of decorated cospans a bit to eliminate the problems we have seen.

In the case of open graphs, one cheap solution is to use a different symmetric lax monoidal 
functor, say $F' \maps (\Fin\Set,+) \to (\Set,\times)$, that sends any finite set $N$ to the set 
of \emph{isomorphism classes} of graph structures on $N$.   Here given two graph structures 
$s,t\maps E \to N$ and $s',t' \maps E' \to N$  on $N$, we define a \textbf{morphism} 
from the first to the second to be a function $f \maps E \to E'$ such that these diagrams 
commute:
\[
\begin{tikzpicture}[scale=1.5]
\node (A) at (0,0) {$E$};
\node (A') at (0,-1) {$E'$};
\node (B) at (1,0) {$N$};
\node (B') at (1,-1) {$N$};
\path[->,font=\scriptsize,>=angle 90]
(A) edge node[above]{$s$} (B)
(A') edge node[above]{$s'$} (B')
(A) edge node[left]{$f$} (A')
(B) edge node[right]{$1$} (B');

\node (C) at (2,0) {$E$};
\node (C') at (2,-1) {$E'$};
\node (D) at (3,0) {$N$};
\node (D') at (3,-1) {$N$};
\path[->,font=\scriptsize,>=angle 90]
(C) edge node[above]{$t$} (D)
(C') edge node[above]{$t'$} (D')
(C) edge node[left]{$f$} (C')
(D) edge node[right]{$1$} (D');
\end{tikzpicture}
\]
We obtain a category of graph structures on $N$ in this way, allowing us to define
isomorphism classes of these.  One can check that using the theory of 
decorated cospans we obtain a symmetric monoidal category $F'\Cospan$ that is
equivalent to ${}_L \Csp(\Graph)$.  

However, working with isomorphism classes of graph structures does not give a double 
category of decorated cospans that is equivalent to ${}_L \lCsp(\Graph)$.  We should really 
work with the \emph{category} of graph structures, not isomorphism classes of graph 
structures!   A clue to a better approach is to note that the forgetful functor $U \maps \Graph 
\to \Fin\Set$ is an opfibration, and the category of graph structures on a finite set $N$ is the 
fiber of this opfibration over $N$.   Thus,  the inverse Grothendieck construction gives a 
pseudofunctor $\tilde{F} \maps \Fin\Set \to \Cat$ sending each finite set $N$ to the category 
of graph structures on $N$.  Moreover, $\tilde{F}$ is symmetric lax monoidal from $(\Fin\Set, +)$ to $(\Cat, \times)$.

In a forthcoming paper with Vasilakopoulou \cite{BCV}, we extend the theory of decorated cospans to handle this sort of data.  That is, given a category $\A$ with finite colimits and a symmetric lax monoidal pseudofunctor $\tilde{F} \maps (\A , +) \to (\Cat, \times)$, we construct a symmetric monoidal double category $\tilde{F} \lCospan$ with decorated cospans as horizontal 1-cells.    Any such pseudofunctor also gives an opfibration $R \maps \X \to \A$ where $\X = \int \tilde{F}$ is defined by the Grothendieck construction.
We show that if the symmetric lax monoidal pseudofunctor $\tilde{F} \maps (\mathsf{A},+) \to (\mathbf{Cat},\times)$ factors through $(\mathbf{Rex},\times)$, the resulting opfibration $R \maps \X \to \A$ is also a right adjoint.  From the accompanying left adjoint $L \maps \A \to \X$, we  construct a symmetric monoidal double category ${}_L \lCsp(\X)$ of structured cospans.  Finally, we prove that this structured cospan double category ${}_L \lCsp(\X)$ is equivalent to the decorated cospan double category $\tilde{F} \lCospan$. Thus, we reconcile the theory of structured cospans and the theory of decorated cospan categories by enhancing the latter.




\section{Applications}
\label{sec:applications}

Decorated cospans have already been used to study electrical circuits \cite{BF},  Markov processes \cite{BFP}, and chemical reaction networks \cite{BP}, while structured cospans
have been used to study electrical circuits \cite{BCR} and Petri nets \cite{BM}.    Here we revisit this work and show that structured cospans can take the place of decorated cospans in many of these applications.   For structured cospans in graph rewriting, see
Cicala's thesis \cite{Cicala}.

\subsection{Circuits}
\label{subsec:circuits}

Building on work with Fong \cite{BF}, Coya, Rebro and the first author have used structured
cospans to describe electrical circuits with inputs and outputs \cite{BCR}.  The key idea is to use graphs with labeled edges.  The edge labels can stand for resistors with any chosen resistance, capacitors with any chosen capacitance, inductors with any chosen inductance, or other circuit elements such as voltage sources, current sources, diodes, and so on.  To study such
circuits quite generally we start by fixing any set $\La$ to serve as edge labels.
\begin{defn}
Given a set $\La$ of \define{labels}, an \define{$\La$-graph} is a graph $s,t\maps E\to N$
 equipped with a function $\ell \maps E \to \La$.
 A \define{morphism} from the $\La$-graph 
 \[ \xymatrix{\La & E \ar@<2.5pt>[r]^{s} \ar@<-2.5pt>[r]_{t} \ar[l]_{\ell} & N} \]
 to the $\La$-graph 
\[ \xymatrix{\La & E' \ar@<2.5pt>[r]^{s'} \ar@<-2.5pt>[r]_{t'} \ar[l]_{\ell'} & N'} \]
is a pair of functions $f \maps E \to E', g \maps N \to N'$ such that these diagrams commute:
\[
\begin{tikzpicture}[scale=1.5]
\node (A) at (0,0) {$E$};
\node (A') at (0,-1) {$E'$};
\node (B) at (1,0) {$N$};
\node (B') at (1,-1) {$N'$};
\path[->,font=\scriptsize,>=angle 90]
(A) edge node[above]{$s$} (B)
(A') edge node[above]{$s'$} (B')
(A) edge node[left]{$f$} (A')
(B) edge node[right]{$g$} (B');

\node (C) at (2,0) {$E$};
\node (C') at (2,-1) {$E'$};
\node (D) at (3,0) {$N$};
\node (D') at (3,-1) {$N'$};
\path[->,font=\scriptsize,>=angle 90]
(C) edge node[above]{$t$} (D)
(C') edge node[above]{$t'$} (D')
(C) edge node[left]{$f$} (C')
(D) edge node[right]{$g$} (D');

\node (E) at (4,-0.5) {$\La$};
\node (G) at (5,0) {$E$};
\node (G') at (5,-1) {$E'$.};
\path[->,font=\scriptsize,>=angle 90]
(G) edge node[above]{$\ell$} (E)
(G) edge node[right]{$f$} (G')
(G') edge node[below]{$\ell'$} (E);
\end{tikzpicture}
\]
We say such a morphism is \define{determined by its action on nodes} if $E' = E$ and
$f=1_E$.
\end{defn}

\begin{defn}
\label{defn:Lgraph}
We define $\Graph_\La$ to be the category of $\La$-graphs and morphisms between them, with composition given by
\[  (f, g) \circ (f',g') = (f \circ f' , g \circ g')  .\]
\end{defn}

When $\La = 1$, an $\La$-graph reduces to a graph and $\Graph_\La$ reduces
to the category $\Graph$ discussed in Section \ref{sec:decorated}.  We now generalize the key
ideas of that section from graphs to $\La$-graphs.   Everything works the same way, 
but following previous work \cite{BCR} we call an open $\La$-graph an `$\La$-circuit'.

There is a functor $U \maps \Graph_\La \to \Fin\Set$ that takes an $\La$-graph to its underlying set of nodes. This has a left adjoint $L \maps \Fin\Set \to \Graph_\La$ sending
any set to the $\La$-graph with that set of nodes and no edges.
Both $\Fin\Set$ and $\Graph_\La$ have colimits, and $L$ preserves them.  Thus Theorem  \ref{thm:LCsp(X) symmetric} gives us a symmetric monoidal double category ${}_L \lCsp(\Graph_\La)$.   Alternatively, we can use Corollary \ref{cor:LCsp(X) symmetric category} to create a symmetric monoidal category ${}_L \Csp(\Graph_\La)$ where:
\begin{itemize}
\item an object is a finite set,
\item a morphism is an isomorphism class of $\La$-circuits, where an
\define{$\La$-circuit} is a cospan in $\Graph_\La$ of this form:
\[
\begin{tikzpicture}[scale=1.5]
\node (A) at (0,0) {$L(S)$};
\node (B) at (1,0) {$G$};
\node (C) at (2,0) {$L(T)$,};
\path[->,font=\scriptsize,>=angle 90]
(A) edge node[above,left]{$$} (B)
(C)edge node[above]{$$}(B);
\end{tikzpicture}
\]
and an \define{isomorphism} of $\La$-circuits is a commutative diagram in $\Graph_\La$ 
of this form:
\[
\begin{tikzpicture}[scale=1.5]
\node (E) at (3,-0.5) {$L(S)$};
\node (F) at (5,-0.5) {$L(T)$};
\node (G) at (4,0) {$G$};
\node (G') at (4,-1) {$G'$};
\path[->,font=\scriptsize,>=angle 90]
(F) edge node[above]{$o$} (G)
(G) edge node[left]{$h$} (G')
(E) edge node[above]{$i$} (G)
(E) edge node[below]{$i'$} (G')
(F) edge node[below]{$o'$} (G');
\end{tikzpicture}
\]
where $h$ is an isomorphism.
\end{itemize} 
This category has a nice universal property, found by Rosebrugh, Sabadini and
Walters \cite{RSW}.  To state this, it is convenient to use the language of props.
Recall that a \define{prop} is a symmetric strict monoidal category whose objects are natural
numbers, with tensor product of objects given by addition.  An \define{algebra} of a prop $\T$ in a symmetric strict monoidal category $\C$ is a symmetric strict monoidal functor $A \maps \T \to \C$.   A \define{morphism} from the algebra $A \maps \T \to \C$ to the algebra
$A' \maps \T \to \C$ is a monoidal natural transformation $\alpha \maps A \To A'$.

\begin{lem} As a symmetric monoidal category, ${}_L \Csp(\Graph_\La)$ is equivalent to a prop $\Circ_\La$.    
\end{lem}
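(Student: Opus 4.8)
The plan is to derive this from a standard fact about symmetric monoidal categories: if $\mathsf{C}$ is a symmetric monoidal category whose monoid of isomorphism classes of objects, under the tensor product, is free on one generator --- i.e.\ isomorphic to $(\N,+,0)$ --- then $\mathsf{C}$ is equivalent, as a symmetric monoidal category, to a prop. This hypothesis holds for ${}_L\Csp(\Graph_\La)$: by Corollary \ref{cor:LCsp(X) symmetric category} its objects are the finite sets, its tensor product is disjoint union, and its unit object is the empty set, so sending a finite set to its cardinality identifies the monoid of isomorphism classes of objects with $(\N,+,0)$. It therefore suffices to prove this fact and to name the resulting prop $\Circ_\La$.

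First I would construct $\Circ_\La$ by hand, writing $+$ for the (coproduct) tensor product of ${}_L\Csp(\Graph_\La)$ on both objects and morphisms. Fix a one-element set $\underline{1}$ and choose objects $b_n$ of ${}_L\Csp(\Graph_\La)$ by $b_0=\emptyset$ (the monoidal unit) and $b_{n+1}=b_n+\underline{1}$. Let $\Circ_\La$ have object set $\N$, hom-sets $\Circ_\La(m,n)={}_L\Csp(\Graph_\La)(b_m,b_n)$, and composition and identities inherited from ${}_L\Csp(\Graph_\La)$; then $B\maps\Circ_\La\to{}_L\Csp(\Graph_\La)$, $n\mapsto b_n$, is fully faithful by construction and essentially surjective because every finite set is isomorphic to some $b_n$. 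Put $m\otimes n=m+n$ on objects and, on morphisms, transport the tensor product through the canonical coherence isomorphisms: for $f\maps b_m\to b_{m'}$ and $g\maps b_n\to b_{n'}$, define $f\otimes g=\phi_{m',n'}\circ(f+g)\circ\phi_{m,n}^{-1}$, where $\phi_{m,n}\maps b_m+b_n\toiso b_{m+n}$ is the unique canonical isomorphism assembled from the associators and unitors; define the symmetry $\sigma_{m,n}\maps m+n\to n+m$ of $\Circ_\La$ from the braiding of ${}_L\Csp(\Graph_\La)$ in the same way.

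The remaining work is to check, using Mac Lane's coherence theorem (every diagram of canonical isomorphisms in a symmetric monoidal category commutes), that $\otimes$ so defined is a bifunctor, is \emph{strictly} associative and unital on objects and morphisms, and that the transported symmetry satisfies the braiding axioms. Strictness is exactly what the deliberate choice of the $b_n$ buys: for instance $b_{(m+n)+p}=b_{m+(n+p)}$ on the nose, and the two canonical isomorphisms from $b_m+(b_n+b_p)$ to this common object agree by coherence, which forces $(f\otimes g)\otimes h=f\otimes(g\otimes h)$. Granting this, $\Circ_\La$ is a symmetric strict monoidal category with object monoid $(\N,+,0)$, hence a prop, and $B$ --- made strong symmetric monoidal using the isomorphisms $\phi_{m,n}$ --- is a symmetric monoidal functor that is fully faithful and essentially surjective, hence a symmetric monoidal equivalence ${}_L\Csp(\Graph_\La)\simeq\Circ_\La$. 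I expect the only genuine obstacle to be this last, purely coherence-theoretic bookkeeping; it is routine but tedious, and since it is precisely the classical argument that a symmetric monoidal category with a one-generated object monoid ``is'' a prop, one could instead simply cite it as folklore.
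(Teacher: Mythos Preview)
Your argument is correct and is the standard way to prove this fact. The paper itself gives no proof here: it simply cites \cite[Proposition 4.3]{BCR}, where the argument is carried out. What you have written is essentially a self-contained version of that cited argument --- choose representative objects $b_n$ forming a one-generated submonoid, restrict hom-sets, and transport the symmetric monoidal structure via the unique coherence isomorphisms $\phi_{m,n}\maps b_m+b_n\toiso b_{m+n}$, invoking Mac Lane's coherence theorem to get strictness. One minor remark: your stated hypothesis (that the monoid of isomorphism classes is \emph{isomorphic} to $\N$) is slightly stronger than what your construction actually uses, which is only that every object is isomorphic to some $b_n$; injectivity of $n\mapsto[b_n]$ plays no role in the proof, and you need not verify it.
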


\begin{proof}
This is \cite[Proposition 4.3]{BCR}.
\end{proof}
\begin{prop} 
\label{prop:LCirc_algebra}
An algebra of $\Circ_\La$ in a symmetric strict monoidal category $\C$ is a 
special commutative Frobenius monoid in $\C$ whose underlying object $x$ is equipped
with an endomorphism $\ell \maps x \to x$ for each element $\ell \in \La$. 
A morphism of algebras of $\Circ_\La$ in $\C$ is a morphism of special commutative Frobenius monoids that also preserves all these endomorphisms.
\end{prop}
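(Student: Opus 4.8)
The plan is to reduce the statement to a presentation of the prop $\Circ_\La$ by generators and relations, and then simply read off what an algebra, and a morphism of algebras, must be. By the preceding lemma (\cite[Proposition 4.3]{BCR}), $\Circ_\La$ is equivalent as a symmetric monoidal category to ${}_L\Csp(\Graph_\La)$, and since props are symmetric \emph{strict} monoidal it suffices to work with a skeleton of ${}_L\Csp(\Graph_\La)$ whose objects are the natural numbers, its monoidal structure strictified in the usual way. An algebra of $\Circ_\La$ in $\C$ is then a symmetric strict monoidal functor $A \maps \Circ_\La \to \C$, which is determined by the object $x := A(1)$ together with the images of a generating set of morphisms, subject to the defining relations.

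Next I would exhibit the generators. By Theorem \ref{thm:hypergraph}, the object $1$ of ${}_L\Csp(\Graph_\La)$ carries a canonical special commutative Frobenius structure whose multiplication, unit, comultiplication and counit are the explicit structured cospans listed there (built from $L(\nabla)$ and $L(!)$), and the hypergraph axioms assert precisely that the special commutative Frobenius relations hold among these four morphisms. In addition, for each label $\ell \in \La$ let $g_\ell \maps 1 \to 1$ be the structured cospan whose apex is the $\La$-graph with node set $\{0,1\}$, a single edge from $0$ to $1$ labelled $\ell$, with input leg picking out $0$ and output leg picking out $1$. The presentation I would establish is: these morphisms generate $\Circ_\La$ as a prop, and the \emph{only} relations are the special commutative Frobenius relations among the four Frobenius generators; equivalently, $\Circ_\La$ is the coproduct of props of the free prop on a special commutative Frobenius monoid and the free prop on an $\La$-indexed family of endomorphisms of the generating object.

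Granting this presentation, the proposition is immediate. A symmetric strict monoidal functor $A$ out of this prop amounts to: an object $x = A(1)$; images of the four Frobenius generators which, since $A$ is strict monoidal and preserves relations, make $x$ into a special commutative Frobenius monoid in $\C$; and, for each $\ell \in \La$, an endomorphism $A(g_\ell)\maps x \to x$ constrained by nothing. Conversely, any such data extends uniquely to a functor. For the morphism part: a monoidal natural transformation $\alpha \maps A \To A'$ is determined by its component $\alpha_1 \maps A(1) \to A'(1)$, since strictness forces $\alpha_n = \alpha_1^{\otimes n}$; naturality at the four Frobenius generators says exactly that $\alpha_1$ is a morphism of special commutative Frobenius monoids, and naturality at each $g_\ell$ says $\alpha_1 \circ A(g_\ell) = A'(g_\ell) \circ \alpha_1$, i.e.\ that $\alpha_1$ preserves all the chosen endomorphisms.

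The main obstacle is the presentation claim of the second paragraph, that is, both generation and soundness. Generation is a routine decomposition: any structured cospan of $\La$-graphs can be rebuilt by using the comultiplications and multiplications of the Frobenius structure to make one copy of the source node and one copy of the target node of each edge available, routing each edge through the appropriate $g_\ell$, and handling the remaining edgeless cospan of finite sets using the Frobenius prop alone (the latter being exactly $\Csp(\Fin\Set)$, since $\Graph_{\varnothing} \simeq \Fin\Set$). Soundness — that no relations beyond the Frobenius ones are imposed — is the crux; here I would either invoke the normal-form analysis of Rosebrugh, Sabadini and Walters \cite{RSW} underlying the preceding lemma, using that the forgetful functor $\Graph_\La \to \Fin\Set$ is an opfibration whose fibres are freely generated by the $g_\ell$, or else give a direct normal form: a morphism $m \to n$ is an isomorphism class of $\La$-graph cospans, one checks that the word produced by the decomposition above recovers the cospan up to isomorphism, and that two such words are provably equal using only the Frobenius relations exactly when they yield isomorphic cospans.
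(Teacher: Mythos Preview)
Your sketch is correct and aligns with the argument in the literature the paper defers to: the paper itself gives no proof beyond citing Rosebrugh--Sabadini--Walters \cite{RSW} and \cite[Proposition~7.2]{BCR}, and your generators-and-relations presentation of $\Circ_\La$ (special commutative Frobenius structure on $1$ together with the single-edge cospans $g_\ell$) is precisely what those references establish. Your identification of soundness as the only nontrivial step, to be discharged either by the RSW normal form or a direct isomorphism-class argument, is accurate.
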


\begin{proof}
This was proved by Rosebrugh, Sabadini and Walters \cite{RSW}, and appears
in the above form in \cite[Proposition 7.2]{BCR}.  
\end{proof}

In applications to circuits, the morphisms $\ell \maps x \to x$ describe different
circuit elements, while the special commutative Frobenius monoid structure is used to split and join wires.  This framework is used to study a wide variety of electrical circuits in a paper with Coya and Rebro \cite{BCR}, so the reader can turn there for details.   To illustrate the ideas let us consider circuits of resistors, where a label in $\La = (0,\infty)$ serves to indicate the resistance of a resistor.   In this case a typical morphism from $1$ to $3$ in 
$\Circ_\La$ looks like this:
\[
\scalebox{0.6}{
\begin{tikzpicture}
	\begin{pgfonlayer}{nodelayer}
		\node [contact] (n1) at (-2,0) {$\bullet$};
		\node [style = none] at (-2.1,0.3) {$$};
		\node [contact] (n2) at (0,1) {$\bullet$};
		\node [style = none] at (0,1.3) {$$};
		\node [contact] (n3) at (0,-1) {$\bullet$};
		\node [style = none] at (0,-1.3) {$$};
		\node [contact] (n4) at (2,1) {$\bullet$};
		\node [style = none] at (2.1,0.3) {$$};
		\node [contact] (n5) at (2,-1) {$\bullet$};
		\node [style = none] at (2.1,0.3) {$$};
		
		\node [style = none] at (-1,1.1) {$2.53$};
		\node [style = none] at (-1,-1.1) {$0.71$};
		\node [style = none] at (1,1.3) {$9.6$};
		\node [style = none] at (1,-1.3) {$1.02$};
	     \node [style = none] at (-0.4,0) {$12.4$};
	     \node [style = none] at (1.6,0) {$6.3$};
		
		\node [style=none] (1) at (-3,0) {};
		\node [style=none] (4) at (3,0) {};
	
		\node [style=none] (ATL) at (-3.4,1.4) {};
		\node [style=none] (ATR) at (-2.6,1.4) {};
		\node [style=none] (ABR) at (-2.6,-1.4) {};
		\node [style=none] (ABL) at (-3.4,-1.4) {};

		\node [style=none] (X) at (-3,1.8) {$1$};
		\node [style=inputdot] (inI) at (-2.8,0) {};
		
		\node [style=none] (Z) at (3,1.8) {$3$};
	 \node [style=inputdot] (outI') at (2.8,1) {};
	 \node [style=inputdot] (outI'') at (2.8,0) {};
	 \node [style=inputdot] (outI''') at (2.8,-1) {};

		\node [style=none] (MTL) at (2.6,1.4) {};
		\node [style=none] (MTR) at (3.4,1.4) {};
		\node [style=none] (MBR) at (3.4,-1.4) {};
		\node [style=none] (MBL) at (2.6,-1.4) {};
	
	\end{pgfonlayer}
	\begin{pgfonlayer}{edgelayer}
		\draw [style=inarrow, bend left=20, looseness=1.00] (n1) to (n2);
		\draw [style=inarrow, bend right=20, looseness=1.00] (n1) to (n3);
		\draw [style=inarrow, bend left=0, looseness=1.00] (n2) to (n4);
		\draw [style=inarrow, bend right=0, looseness=1.00] (n3) to (n4);
		\draw [style=inarrow, bend right=0, looseness=1.00] (n3) to (n5);
		\draw [style=inarrow] (n2) to (n3);
		\draw [style=simple] (ATL.center) to (ATR.center);
		\draw [style=simple] (ATR.center) to (ABR.center);
		\draw [style=simple] (ABR.center) to (ABL.center);
		\draw [style=simple] (ABL.center) to (ATL.center);
		\draw [style=simple] (MTL.center) to (MTR.center);
		\draw [style=simple] (MTR.center) to (MBR.center);
		\draw [style=simple] (MBR.center) to (MBL.center);
		\draw [style=simple] (MBL.center) to (MTL.center);
		\draw [style=inputarrow] (inI) to (n1);
		\draw [style=inputarrow] (outI') to (n4);
		\draw [style=inputarrow] (outI'') to (n5);
		\draw [style=inputarrow] (outI''') to (n5);
	\end{pgfonlayer}
\end{tikzpicture}
}
\]
The edges here represent wires, and the positive real numbers labeling them describe
the resistance of the resistor on each wire.  The points in the boxes represent `terminals': that is, points where we allow ourselves to attach a wire from another circuit.   The points in the left box are called `inputs' and the points in the right box are called `outputs'.  In electrical engineering we associate two real numbers to each terminal, called `potential' and `current'.   Any circuit of resistors imposes a specific relation between the potentials and currents at its inputs and those at its outputs.  All these relations, for all circuits of resistors, can be described using a single functor as follows.

There is a symmetric monoidal category $\Fin\Rel_\R$ where the objects are finite-dimensional real vector spaces and a morphism from $V$ to $W$ is a \define{linear relation} from $V$ to $W$: that is, a relation $L \subseteq V \times W$ that is a linear subspace of $V \times W$.   Composition in $\Fin\Rel_\R$ is the usual composition of relations, and the symmetric monoidal structure is provided by direct sum.  

There is a symmetric monoidal functor
\[   \blacksquare \maps \Circ_\La \to \Fin\Rel_\R  \]
sending any finite set $S$ to the vector space $\R^S \oplus \R^S$ and
sending any circuit of resistors to the relation it imposes between the potentials and currents at its inputs and those at its outputs \cite[Section 9]{BCR}.   We can construct this using Proposition \ref{prop:LCirc_algebra}, by choosing a special commutative Frobenius monoid in $\Fin\Rel_\R$ whose underlying object is equipped with an endomorphism for each resistance $R \in (0,\infty)$.   The object $\R^2 \in \Fin\Rel_\R$ is a special commutative Frobenius monoid in a standard way \cite[Section\ 8]{BCR}, so we choose this one.   To define $\blacksquare \maps \Circ_\La \to \Fin\Rel_\R$ it then suffices to choose for each $R \in (0,\infty)$ a linear relation from $\R^2$ to itself.   We use this:
\[ \left\{(\phi_1,I_1,\phi_2,I_2) : \; I_1 = I_2 , \; \phi_2-\phi_1 = R I_1\right\} 
 \subseteq \R^2 \oplus \R^2.\]
This expresses two laws of electrical engineering.   Kirchhoff's current law says that the 
current flowing into a wire equals the current flowing out: $I_1 = I_2$.   Ohm's law says that the voltage across a wire with a resistor on it, $\phi_2 - \phi_1$, is equal to the current flowing through the wire times the resistance $R$ of that resistor.

Earlier work with Fong studied circuits using decorated rather than structured cospans \cite{BF}, and it fell afoul of the problems discussed in Section \ref{sec:decorated}.  We make no attempt to explain the results here, but we can quickly explain a corrected version of this decorated cospan category of circuits.   For any set $\La$, define an \define{$\La$-graph structure} on a finite set $N$ to be an $\La$-graph whose set of nodes is $N$.    If we use a small strict monoidal version of $(\Fin\Set,+)$, there is a lax monoidal functor
\[     F_\La \maps (\Fin\Set, +) \to (\Set, \times ) \]
assigning to each finite set $N$ the collection of all $\La$-graph structures on $N$.
The theory of decorated cospans \cite[Thm.\  2.1.3]{CourThesis} thus gives a monoidal category 
$F_{\!\La} \Cospan$ where:
\begin{itemize}
\item an object is a finite set,
\item a morphism is an equivalence class of $\La$-circuits
\[
\begin{tikzpicture}[scale=1.5]
\node (A) at (0,0) {$L(S)$};
\node (B) at (1,0) {$G$};
\node (C) at (2,0) {$L(T)$};
\path[->,font=\scriptsize,>=angle 90]
(A) edge node[above,left]{$$} (B)
(C)edge node[above]{$$}(B);
\end{tikzpicture}
\]
where two are equivalent if there is a commutative diagram in $\Graph_\La$ of this form:
\[
\begin{tikzpicture}[scale=1.5]
\node (E) at (3,-0.5) {$L(S)$};
\node (F) at (5,-0.5) {$L(T)$};
\node (G) at (4,0) {$G$};
\node (G') at (4,-1) {$G'$};
\path[->,font=\scriptsize,>=angle 90]
(F) edge node[above]{$o$} (G)
(G) edge node[left]{$h$} (G')
(E) edge node[above]{$i$} (G)
(E) edge node[below]{$i'$} (G')
(F) edge node[below]{$o'$} (G');
\end{tikzpicture}
\]
with $h$ an isomorphism that is determined by its action on nodes in the sense of Definition \ref{defn:Lgraph}.
\end{itemize}

The restriction that $h$ be determined by its action on nodes means that isomorphic 
$\La$-circuits can give different morphisms in $F_{\!\La} \Cospan$.  However, there is a
functor
\[    J \maps F_{\!\La} \Cospan \to \Circ_\La \]
that eliminates this redundancy: it is the identity on objects, and it maps each open
circuit to its isomorphism class.  Furthermore, $\Circ_\La$ is symmetric monoidal, while
$F_{\!\La} \Cospan$ is merely monoidal, due to the problem discussed in Section \ref{sec:decorated}.

\subsection{Petri nets}
\label{subsec:Petri}

Petri nets are widely used by computer scientists as a simple model of distributed, 
concurrent computation \cite{GiraultValk,Peterson}.   From the viewpoint of a category
theorist, a Petri net is a convenient way to present a simple sort of symmetric monoidal
category: namely, a \emph{commutative} monoidal category---a commutative monoid object in $\Cat$---that is free on some objects and morphisms \cite{MM}.   Recently Master and the first author studied `open' Petri nets using structured cospans \cite{BM}.  By composing and tensoring open Petri nets, we can build complicated Petri nets out of smaller pieces.   As we shall see, the semantics of open Petri nets is a nice illustration of our main method of describing maps between structured cospan categories, Theorem \ref{thm:symmetric_monoidal_double_functor}.

To define Petri nets \cite{MM} we start with the monad for commutative monoids, $\N \maps \Set \to \Set$.  Concretely, $\N[X]$ is the set of formal finite linear combinations of elements of $X$ with natural number coefficients.   The set $X$ naturally includes in $\N[X]$, and for any function $f \maps X \to Y$, there is a unique monoid homomorphism $\N[f] \maps \N[X] \to \N[Y]$ extending $f$.  

\begin{defn}
\label{defn:PetriNet}
We define a \define{Petri net} to be a pair of functions of the following form:
\[\xymatrix{ T \ar@<-.5ex>[r]_-t \ar@<.5ex>[r]^-s & \N[S]. } \]
We call $T$ the set of \define{transitions}, $S$ the set of \define{places}, $s$ the \define{source} function and $t$ the \define{target} function. 
A \define{morphism} from the Petri net $s,t\maps T \to \N[S]$ to the Petri net $s',t'\maps T' \to \N[S']$ is a pair of functions $f \maps T \to T', g \maps S \to S'$ such that the following diagrams commute:
	\[
	\xymatrix{ 
		T \ar[d]_f  \ar[r]^-{s} & \N[S] \ar[d]^-{\N[g]} \\	
		T' \ar[r]^-{s'} & \N[S'] 
	}
	\qquad
	\xymatrix{ 
		T \ar[d]_f  \ar[r]^-{t} & \N[S] \ar[d]^-{\N[g]} \\	
		T' \ar[r]^-{t'} & \N[S'] . 
	}
	\]
Let $\Petri$ be the category of Petri nets and Petri net morphisms, with composition 
defined by 
\[  (f, g) \circ (f',g') = (f \circ f' , g \circ g')  .\]
\end{defn}

It is commmon to draw a Petri net as a bipartite graph with the places as circles and the transitions as squares:
\[
\scalebox{0.8}{
\begin{tikzpicture}
	\begin{pgfonlayer}{nodelayer}
		\node [style=species] (A) at (-2.2, 0.9) {$A$};
		\node [style=species] (B) at (-2.2, -0.9) {$B$};
             \node [style=transition] (a) at (0, 1.2) {$\;\alpha\phantom{\big|}$}; 
		\node [style = species] (C) at (2, 0) {$C$};
	     \node [style = transition] (b) at (0, -1.2) {$\,\;\beta\phantom{\big|}$};
		
	\end{pgfonlayer}
	\begin{pgfonlayer}{edgelayer}
		\draw [style=inarrow, bend left=15, looseness=1.00] (A) to (a);
		\draw [style=inarrow, bend left=35, looseness=1.00] (B) to (a);
		\draw [style=inarrow, bend left=25, looseness=1.00] (a) to (C);
		\draw [style=inarrow, bend left=45, looseness=1.00] (a) to (C);
	     \draw [style=inarrow, bend left=35, looseness=1.00] (C) to (b);
	     	\draw [style=inarrow, bend left=35, looseness=1.00] (b) to (A);
		\draw [style=inarrow, bend left=15, looseness=1.00] (b) to (B);

	\end{pgfonlayer}
\end{tikzpicture}
}
\]
However, we must bear in mind that the edges in this graph are merely a device
for describing the source and target of each transition: there is not really a set of edges from 
a place to a transition or a transition to a place, but merely a \emph{number}.   For example, $\alpha$ above is a transition with $s(\alpha) = A+B$ and $t(\alpha) = 2C$.

Any Petri net has an underlying set of places.  Indeed there is a functor
$R \maps \Petri \to \Set$ that acts as follows on Petri nets and Petri net morphisms:
\[ 	
	\xymatrix{ 
		T \ar[d]_f  \ar@<-.5ex>[r]_-{t} \ar@<.5ex>[r]^-{s} & \N[S] \ar[d]^{\N[g] \quad \mapsto} &  S \ar[d]^{g} \\
		T' \ar@<-.5ex>[r]_-{t'} \ar@<.5ex>[r]^-{s'} & \N[S'] & \; S'.
	}
\]
To build a structured cospan category we need the left adjoint of $R$, and we need
$\Petri$ to have finite colimits.

\begin{lem}
\label{lem:L_Petri}
The functor $R$ has a left adjoint $L \maps \Set \to \Petri$ defined on sets and functions as follows:
\[
	\xymatrix{
		X \ar[d]_g^{ \quad \mapsto} & \emptyset \ar[d] \ar@<-.5ex>[r] \ar@<.5ex>[r] & \N[X] 		\ar[d]^{\N[g]} \\
		Y & \emptyset  \ar@<-.5ex>[r] \ar@<.5ex>[r] & \N[Y]
	}
\]
where the unlabeled maps are the unique maps of that type. 
\end{lem}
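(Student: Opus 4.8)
The plan is to prove directly that $L$, as described, is left adjoint to $R$ by constructing a natural bijection
\[
\Petri\bigl(L(X), P\bigr) \;\cong\; \Set\bigl(X, R(P)\bigr)
\]
for every set $X$ and every Petri net $P$. First I would unwind the left-hand side. Write $P$ as $s,t \maps T \to \N[S]$, so that $R(P) = S$, and recall that $L(X)$ has empty transition set and place set $X$. By Definition \ref{defn:PetriNet}, a Petri net morphism $L(X) \to P$ is a pair of functions: one from $\emptyset$ to $T$, and one $g \maps X \to S$ from the places of $L(X)$ to the places of $P$. The first is forced to be the unique map out of $\emptyset$, and the two squares required to commute for a Petri net morphism become equations between maps with domain $\emptyset$, hence hold automatically. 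So the data of a morphism $L(X) \to P$ is precisely a function $g \maps X \to S = R(P)$, which gives the claimed bijection.

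Next I would check naturality in both variables. Naturality in $P$: a morphism $\phi \maps P \to P'$ in $\Petri$ acts on $\Petri(L(X), P)$ by postcomposition, and on the underlying place functions this is postcomposition with $R(\phi)$ --- exactly the action of $\Set(X, R(P)) \to \Set(X, R(P'))$. Naturality in $X$: a function $h \maps X' \to X$ induces $L(h) \maps L(X') \to L(X)$ whose place component is $h$ itself, so precomposition with $L(h)$ corresponds under the bijection to precomposition with $h$. This establishes $L \dashv R$. Equivalently, one may observe that $R(L(X)) = X$ on the nose, take the unit $\eta_X \maps X \to R(L(X))$ to be the identity and the counit $\varepsilon_P \maps L(R(P)) \to P$ to be the identity on places together with the empty map on transitions (well defined since the square conditions are again vacuous), and note that both triangle identities collapse to identities of identity morphisms.

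I do not expect any genuine obstacle. The whole content is the observation that a Petri net with no transitions imposes no conditions on morphisms out of it beyond a choice of function on places, so it is ``free'' on its set of places relative to $R$. The only bookkeeping to keep straight is that $\N$ is the free commutative monoid monad and that $\N[g]$ and the forgetful functor $R$ behave as in Definition \ref{defn:PetriNet}, all of which is already in place; one should also check that the stated assignment on functions really is functorial, which is immediate since $g \mapsto L(g)$ acts as the identity on transition sets and as $g$ on place sets.
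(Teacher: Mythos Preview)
Your argument is correct. The hom-set bijection is exactly as you describe: since $L(X)$ has empty transition set, a morphism $L(X)\to P$ collapses to a bare function on places, and the commuting squares are vacuous because their domain is $\emptyset$. Your naturality checks and the alternative unit/counit description are also fine.

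The paper does not actually prove this lemma here; it simply cites \cite[Lemma 11]{BM}. So there is no ``approach'' to compare against beyond noting that you have supplied the direct verification the paper outsources. Your write-up is therefore more self-contained than the paper's own treatment, and nothing further is needed.
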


\begin{proof}
This is \cite[Lemma 11]{BM}.
\end{proof}

\begin{lem}
\label{lem:Petri_colimit}
The category $\Petri$ has small colimits.
\end{lem}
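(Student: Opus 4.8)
The plan is to realize $\Petri$ as a category of models for a finite-limit sketch, or — more directly — as a comma-type category built from functors and pullbacks, and then invoke general cocompleteness results. The cleanest route exploits the fact that $\N\maps\Set\to\Set$ is the free commutative monoid monad, which is finitary (it preserves filtered colimits), and hence the category of algebras $\Set^{\N}=\CMC$ of commutative monoids is locally finitely presentable, in particular cocomplete. First I would observe that a Petri net is exactly a pair of morphisms $s,t\maps T\to N[S]$ in $\Set$; equivalently, writing $U\maps \CMC\to\Set$ for the forgetful functor and noting $N[S]=U(F S)$ where $F\maps \Set\to\CMC$ is the free functor, a Petri net is an object of the comma category obtained from the span $\Set \xrightarrow{\;2\times(-)\;}\Set \xleftarrow{\;UF\;}\Set$, where $2\times(-)$ doubles a set (to carry both $s$ and $t$ as a single map $T+T\to N[S]$, or equivalently a map $T\to N[S]\times N[S]$). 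Concretely, $\Petri$ is the pullback in $\mathbf{Cat}$ of
\[
\Set^{\to}\times_{\Set}\Set^{\to} \;\cong\; \mathrm{Arr}\text{-like gluing}
\]
more precisely the iso-comma / inserter category $\mathrm{ins}(T\mapsto T\times T,\; S\mapsto N[S]\times N[S])$ over $\Set$; a Petri net is an object $(S,T,\langle s,t\rangle\maps T\to N[S]\times N[S])$ and morphisms are as in Definition~\ref{defn:PetriNet}.

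The key steps, in order: (1) recall that $\CMC$ is cocomplete — this is standard, since $\N$ is a finitary (indeed, it is the monad of a Lawvere theory) monad on the cocomplete category $\Set$, so $\Set^{\N}$ has all small colimits, created in a suitable sense but in any case existing. (2) Recognize $\Petri$ as a \emph{lax limit} (comma/inserter) of a diagram of functors into $\Set$, all of which preserve colimits: the functor $\Set\to\Set$, $S\mapsto N[S]\times N[S]$, is a composite of $F$ (a left adjoint, hence cocontinuous) with $U$ (which, being monadic over $\Set$ with $\N$ finitary, preserves \emph{reflexive coequalizers and filtered colimits} and preserves coproducts) followed by the diagonal-type doubling; and $S\mapsto S\times S$ is not cocontinuous but that is irrelevant because colimits in the comma/inserter are computed componentwise and only require the relevant functors to create the needed colimits on the $N[S]$ side. (3) Invoke the general principle: if $\C$ and $\D$ are cocomplete and $P\maps\C\to\mathcal{E}$, $Q\maps\D\to\mathcal{E}$ are functors with $Q$ cocontinuous (here $\D=\Set$, $\mathcal{E}=\Set$, $Q=$ the doubling-of-$N[-]$ composite, and the $T$-component lives in $\Set$ which is cocomplete), then the comma category $P\downarrow Q$ — and the variant where both legs are structured — is cocomplete, with colimits formed by taking the colimit in each component and using cocontinuity of $Q$ to induce the comparison map. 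Applying this with the place-object in $\Set$ and the transition-object in $\Set$ gives small colimits in $\Petri$, computed by: colimit of places in $\Set$, colimit of transitions in $\Set$, and the induced source/target maps into $N[\mathrm{colim}\,S]\times N[\mathrm{colim}\,S]$ obtained since $N[-]$ preserves the colimit of the place-diagram (as $F$ is a left adjoint).

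I expect the main obstacle to be a subtle bookkeeping point rather than a deep one: ensuring that the colimit of the \emph{place} sets, after applying $N[-]$, genuinely receives the required cocone from the transition-side data in a coherent way — i.e.\ that the comparison morphism $\mathrm{colim}\,T \to N[\mathrm{colim}\,S]\times N[\mathrm{colim}\,S]$ built from the individual $\langle s_i,t_i\rangle$ is well-defined and satisfies the universal property. This reduces to the fact that $N[-]=UF$ sends the place-diagram colimit to a colimit (true since $F$ preserves all colimits and we only need $U$ to preserve \emph{this particular} colimit cocone's image, which it does because the comparison is induced, not required to be created). The honest subtlety is that $U\maps\CMC\to\Set$ does \emph{not} preserve all colimits, only sifted ones; but for the comma-category argument we never need $U$ to preserve colimits — we need $F$ (hence $N[-]$) to, and that holds unconditionally. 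So the plan is: cite cocompleteness of $\CMC$, present $\Petri$ as the appropriate comma/inserter category over $\Set$ of the two functors "$S\mapsto S$" (for transitions, via $\Set$) and "$S\mapsto N[S]\times N[S]$", verify colimits are computed componentwise using cocontinuity of $F$, and conclude. A shorter alternative worth mentioning: $\Petri$ is a presheaf-like category — indeed it is equivalent to the category of models in $\Set$ of a finite-limit theory (a "graph-with-$\N$-weighted-edges" sketch), or can be exhibited as a slice/coslice construction, and any such category is locally presentable, hence cocomplete; but the comma-category argument is the most self-contained given the tools in the excerpt.
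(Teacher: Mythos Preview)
Your comma-category strategy is sound and matches the approach the paper uses for the closely related Lemma~\ref{lem:Petri_r_colimit} on $\Petri_r$ (for the present lemma the paper simply cites \cite[Lemma~15]{BM}). But you have the cocontinuity hypothesis on the wrong leg, and this leads you to a false claim.

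The standard result (cf.\ \cite[Section~5.2, Theorem~3]{BR}, which the paper invokes for $\Petri_r$) says: if $\A$ and $\B$ are cocomplete and $P\maps\A\to\C$ preserves colimits, with \emph{no condition} on $Q\maps\B\to\C$, then the comma category $P\downarrow Q$ (objects $(a,b,Pa\to Qb)$) is cocomplete, colimits computed componentwise. Writing $\Petri\simeq(\id_{\Set}\downarrow G)$ with $G(S)=\N[S]\times\N[S]$, it is the \emph{transition-side} functor $P=\id_{\Set}$ that must be cocontinuous, and it trivially is. You instead demand that $G$ be cocontinuous and then assert that $\N[-]=UF$ preserves colimits ``because $F$ does''. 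That inference is wrong and the conclusion is false: $\N[\emptyset]=\{0\}$ is not initial in $\Set$, and $\N[X+Y]\cong\N[X]\times\N[Y]$ rather than $\N[X]+\N[Y]$. Fortunately your actual construction of the induced map $\colim T_i\to\N[\colim S_i]^2$ is correct --- it works because $\colim T_i$ is the colimit of the $T_i$ (so the cocone $T_i\to\N[S_i]^2\to\N[\colim S_i]^2$ factors through it by the universal property), not because $\N[-]$ preserves anything. Swap which leg carries the hypothesis and drop the erroneous claim about $\N[-]$, and the proof goes through.
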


\begin{proof}
This is \cite[Lemma 15]{BM}.
\end{proof}

Thanks to these lemmas, Theorem \ref{thm:LCsp(X) symmetric} gives us a symmetric monoidal double category ${}_L\lCsp(\Petri)$, or $\Open(\Petri)$ for short, in which:
\begin{itemize}
\item an object is a set,
\item a vertical 1-morphism is a function,
\item a horizontal 1-cell from $X$ to $Y$ is an \define{open Petri net}, meaning a
cospan in $\Petri$ of this form:
\[
\begin{tikzpicture}[scale=1.5]
\node (A) at (0,0) {$L(X)$};
\node (B) at (1,0) {$P$};
\node (C) at (2,0) {$L(Y)$,};
\path[->,font=\scriptsize,>=angle 90]
(A) edge node[above,left]{$$} (B)
(C)edge node[above]{$$}(B);
\end{tikzpicture}
\]
\item a 2-morphism is a \define{map of open Petri nets}, meaning
a commutative diagram in $\Petri$ of this form:
\[
\begin{tikzpicture}[scale=1.5]
\node (E) at (3,0) {$L(X)$};
\node (F) at (5,0) {$L(Y)$};
\node (G) at (4,0) {$P$};
\node (E') at (3,-1) {$L(X')$};
\node (F') at (5,-1) {$L(Y')$.};
\node (G') at (4,-1) {$P'$};
\path[->,font=\scriptsize,>=angle 90]
(F) edge node[above]{$$} (G)
(E) edge node[left]{$L(f)$} (E')
(F) edge node[right]{$L(g)$} (F')
(G) edge node[left]{$h$} (G')
(E) edge node[above]{$$} (G)
(E') edge node[below]{$$} (G')
(F') edge node[below]{$$} (G');
\end{tikzpicture}
\]
\end{itemize} 
We can draw an open Petri net as a Petri net with maps from sets $X$ and $Y$ into
its set of places:
\[
\scalebox{0.8}{
\begin{tikzpicture}
	\begin{pgfonlayer}{nodelayer}
		\node [style=species] (A) at (-4, 0.5) {$A$};
		\node [style=species] (B) at (-4, -0.5) {$B$};;
           \node [style=transition] (a) at (-2.5, 0) {$\;\alpha\phantom{\big|}$}; 
		\node [style = species] (E) at (-1, 0) {$C$};
		\node [style = species] (F) at (2,0) {$D$};

	     \node [style = transition] (b) at (.5, 1) {$\,\;\beta\phantom{\big|}$};
		\node [style = transition] (c) at (.5, -1) {$\,\;\gamma\phantom{\big|}$};
		
		\node [style=empty] (X) at (-5.1, 1) {$X$};
		\node [style=none] (Xtr) at (-4.75, 0.75) {};
		\node [style=none] (Xbr) at (-4.75, -0.75) {};
		\node [style=none] (Xtl) at (-5.4, 0.75) {};
           \node [style=none] (Xbl) at (-5.4, -0.75) {};
	
		\node [style=inputdot] (1) at (-5, 0.5) {};
		\node [style=empty] at (-5.2, 0.5) {$1$};
		\node [style=inputdot] (2) at (-5, 0) {};
		\node [style=empty] at (-5.2, 0) {$2$};
		\node [style=inputdot] (3) at (-5, -0.5) {};
		\node [style=empty] at (-5.2, -0.5) {$3$};	
		
      	\node [style=empty] (Z) at (3, 1) {$Y$};
		\node [style=none] (Ztr) at (2.75, 0.75) {};
		\node [style=none] (Ztl) at (3.4, 0.75) {};
		\node [style=none] (Zbl) at (3.4, -0.75) {};
		\node [style=none] (Zbr) at (2.75, -0.75) {};

		\node [style=inputdot] (6) at (3, 0) {};
		\node [style=empty] at (3.2, 0) {$4$};	
		
	\end{pgfonlayer}
	\begin{pgfonlayer}{edgelayer}
		\draw [style=inarrow] (a) to (A);
		\draw [style=inarrow] (a) to (B);
	     \draw [style=inarrow, bend right=15, looseness=1.00] (E) to (a);
	     \draw [style=inarrow, bend left =15, looseness=1.00] (E) to (a);	
	     	\draw [style=inarrow, bend left=30, looseness=1.00] (E) to (b);
		\draw [style=inarrow, bend left=30, looseness=1.00] (b) to (F);
		\draw [style=inarrow, bend left=30, looseness=1.00] (c) to (E);
		\draw [style=inarrow, bend left=30, looseness=1.00] (F) to (c);	
		\draw [style=inputarrow] (1) to (A);
		\draw [style=inputarrow] (2) to (B);
		\draw [style=inputarrow] (3) to (B);
		\draw [style=inputarrow] (6) to (F);
		\draw [style=simple] (Xtl.center) to (Xtr.center);
		\draw [style=simple] (Xtr.center) to (Xbr.center);
		\draw [style=simple] (Xbr.center) to (Xbl.center);
		\draw [style=simple] (Xbl.center) to (Xtl.center);
		\draw [style=simple] (Ztl.center) to (Ztr.center);
		\draw [style=simple] (Ztr.center) to (Zbr.center);
		\draw [style=simple] (Zbr.center) to (Zbl.center);
		\draw [style=simple] (Zbl.center) to (Ztl.center);
	\end{pgfonlayer}
\end{tikzpicture}
}
\]
We explained composition and tensoring of open Petri nets using pictures
in Section \ref{sec:intro}.

Now we construct a structured cospan category $\Open(\CMC)$ of `open commutative monoidal categories' and a map
\[          \Open(F) \maps  \Open(\Petri) \to \Open(\CMC) .\]
\begin{defn}
A \define{commutative monoidal category} is a symmetric strict monoidal category where
all the braidings $a \otimes b \to b \otimes a$ are identities.  A \define{morphism} of commutative monoidal categories is a symmetric strict monoidal
functor.
\end{defn}

\begin{defn}
Let $\CMC$ be the category of commutative monoidal categories and morphisms between
them. 
\end{defn}

Any commutative monoidal category has an underlying set of objects.  Let $R' \maps \CMC \to \Set$ be the functor sending any commutative monoidal category to its underlying set of objects and any morphism to its underlying function on objects.   To build a structured cospan category of open commutative monoidal categories we use a left adjoint of $R'$, and we need $\CMC$ to have finite colimits.   

\begin{lem}
\label{lem:L'_CMC}
The functor $R'$ has a left adjoint $L' \maps \Set \to \CMC$ sending any set $S$ to the commutative monoidal category with $\N[S]$ as its commutative monoid of objects and with only identity morphisms.
\end{lem}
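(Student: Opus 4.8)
The plan is to produce, for each set $S$ and each commutative monoidal category $C$, a bijection
\[ \CMC(L'(S), C) \;\cong\; \Set(S, R'(C)) \]
natural in $S$ and $C$; this is exactly an adjunction $L' \dashv R'$. Before starting on this I would spell out $L'$ on morphisms and check that it takes values in $\CMC$: given $g \maps S \to S'$, let $L'(g) \maps L'(S) \to L'(S')$ be the functor acting on objects by the monoid homomorphism $\N[g] \maps \N[S] \to \N[S']$ and sending every (identity) morphism to an identity. This is a functor because $L'(S)$ has only identity morphisms; it is strict monoidal since $\N[g]$ preserves addition and the empty sum strictly; and it preserves braidings trivially, since all braidings in $L'(S)$ and $L'(S')$ are identities. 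Functoriality of $L'$ itself is immediate from functoriality of $\N[-]$.

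The core step is the observation that a symmetric strict monoidal functor $F \maps L'(S) \to C$ is completely determined by its object map $\Ob(F) \maps \N[S] \to \Ob(C)$. Indeed, since every morphism of $L'(S)$ is an identity, $F$ must send $1_x$ to $1_{F(x)}$, and this assignment is automatically functorial. Here $\Ob(C)$ is a commutative monoid under $\otimes$: associativity and unitality are strict, and the existence of an identity braiding $a \otimes b \to b \otimes a$ forces $a \otimes b = b \otimes a$ as objects. Strict monoidality of $F$ says precisely that $\Ob(F)$ is a monoid homomorphism $\N[S] \to \Ob(C)$, and the symmetric and strict conditions impose nothing more because both categories have identity braidings. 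Conversely, any monoid homomorphism $h \maps \N[S] \to \Ob(C)$ yields such a functor: put $F(x) = h(x)$ and $F(1_x) = 1_{h(x)}$, and functoriality, strictness, unit preservation and braiding preservation all hold by the same reasoning. Hence $\CMC(L'(S), C) \cong \mathrm{CMon}(\N[S], \Ob(C))$.

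Finally I would invoke the universal property of $\N[S]$ as the free commutative monoid on $S$: restriction along $S \hookrightarrow \N[S]$ gives a natural bijection $\mathrm{CMon}(\N[S], M) \cong \Set(S, M)$ for every commutative monoid $M$, in particular for $M = \Ob(C) = R'(C)$. Composing the two bijections gives $\CMC(L'(S), C) \cong \Set(S, R'(C))$. Naturality in $C$ holds because postcomposing with a symmetric strict monoidal functor $C \to C'$ acts on objects by postcomposition with a monoid homomorphism; naturality in $S$ follows from naturality of the free--forgetful adjunction for commutative monoids together with the definition of $L'$ on morphisms. This establishes $L' \dashv R'$.

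I do not expect a serious obstacle: the argument is essentially bookkeeping built on the standard free commutative monoid adjunction. The one point deserving genuine care is the claim that a symmetric strict monoidal functor out of $L'(S)$ is the same data as a bare monoid homomorphism on objects---one must confirm that the ``strict'' and ``symmetric'' parts of the structure are vacuous on the source side, so no coherence data or conditions are being silently discarded, and that the induced action on morphisms is automatically forced and consistent. Once that is in hand, the rest is immediate.
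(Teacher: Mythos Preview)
Your proof is correct. The paper does not give its own proof here at all: it simply cites \cite[Lemma 9]{BM}, so there is no approach to compare against beyond noting that your direct verification of the hom-set bijection via the free commutative monoid property is the expected argument and would serve as a self-contained replacement for the citation.
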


\begin{proof}
This is \cite[Lemma 9]{BM}.
\end{proof}

\begin{lem}
\label{lem:CMC_cocomplete}
The category $\CMC$ has small colimits.
\end{lem}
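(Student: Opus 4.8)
The plan is to realize $\CMC$ as the category of algebras for a finitary monad on $\Cat$, and then invoke the standard fact that such algebra categories are cocomplete. (This is essentially the content of the corresponding result in \cite{BM}.)

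First I would recall that $\Cat$ is cocomplete and locally finitely presentable, and observe that a commutative monoidal category in the sense defined above is exactly a commutative monoid object in the cartesian category $(\Cat,\times,1)$, while a morphism of $\CMC$ is exactly a homomorphism of such monoid objects. The forgetful functor $U \maps \CMC \to \Cat$ therefore has a left adjoint $N$, where $N(C)$ has underlying category the coproduct in $\Cat$ of the symmetric powers $\coprod_{n \in \N} C^n/S_n$, with tensor product given by concatenation of words and unit the empty word; this is the free commutative monoidal category on $C$, and it restricts along the inclusion of discrete categories to the functor $L' \maps \Set \to \CMC$ of Lemma \ref{lem:L'_CMC}. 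A routine verification shows that the algebras for the induced monad $T = UN$ are precisely the objects of $\CMC$ and the algebra morphisms precisely its morphisms, so that $U$ is monadic and $\CMC \cong \Cat^T$.

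Next I would check that $T$ is finitary, i.e.\ preserves filtered colimits: this holds because the functors $C \mapsto C^n$ preserve filtered colimits (finite products in $\Cat$ commute with filtered colimits), the quotients by the finite group actions of the $S_n$ do as well, and the outer coproduct is indexed by the fixed set $\N$. With this in hand, the theorem that the category of algebras for an accessible --- in particular finitary --- monad on a locally presentable category is again locally presentable shows that $\CMC$ is locally finitely presentable, hence complete and cocomplete; in particular it has all small colimits. As alternatives one could instead construct coproducts and coequalizers in $\Cat^T$ by hand, transporting colimits of free algebras along reflexive coequalizers (which $T$ preserves), or simply note that $\Cat$ is a cartesian closed locally presentable category, so commutative monoids in it automatically form a locally presentable category.

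The only step requiring genuine care --- and hence the main obstacle --- is the identification of $T$-algebras with commutative monoidal categories: one must verify that the axioms for a $T$-algebra structure map $TC \to C$ unwind exactly into a strict monoidal structure on $C$ all of whose braidings are identities, naturally in $C$. This is elementary but should be spelled out; everything after it is a direct appeal to standard results on locally presentable categories.
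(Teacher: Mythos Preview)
Your proposal is correct. The paper gives no argument of its own here, simply citing \cite[Theorem~16]{BM} for ``various ways'' to see the result; your monadic approach --- identifying $\CMC$ with the category of commutative monoid objects in $(\Cat,\times)$, hence with algebras for the finitary free-commutative-monoid monad on the locally finitely presentable category $\Cat$ --- is one of the standard routes and is complete as stated. Your alternative shortcut, that commutative monoids in any cartesian closed locally presentable category automatically form a locally presentable (hence cocomplete) category, is arguably the cleanest version and avoids having to verify monadicity directly. The one step you flag as needing care, that $T$-algebra structures unwind to strictly commutative monoidal structures, is indeed elementary: it follows either from Beck's theorem (the forgetful $U\maps\CMC\to\Cat$ creates $U$-split coequalizers since commutative-monoid structure transports along split epis) or, more conceptually, from the fact that $\CMC$ is the category of product-preserving functors out of the Lawvere theory for commutative monoids into $\Cat$.
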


\begin{proof}
This can be shown in various ways; see \cite[Theorem 16]{BM} for two.
\end{proof}

Thanks to these lemmas, Theorem \ref{thm:LCsp(X) symmetric} gives us a symmetric monoidal double category ${}_{L'} \lCsp(\CMC)$, or $\Open(\CMC)$ for short, in which:
\begin{itemize}
\item an object is a set,
\item a vertical 1-morphism is a function,
\item a horizontal 1-cell from $X$ to $Y$ is an \define{open commutative monoidal category}, meaning a cospan in $\CMC$ of this form:
\[
\begin{tikzpicture}[scale=1.5]
\node (A) at (0,0) {$L'(X)$};
\node (B) at (1,0) {$\C$};
\node (C) at (2,0) {$L'(Y)$,};
\path[->,font=\scriptsize,>=angle 90]
(A) edge node[above,left]{$$} (B)
(C)edge node[above]{$$}(B);
\end{tikzpicture}
\]
\item a 2-morphism is a \define{map of open commutative monoidal categories}, meaning
a commutative diagram in $\CMC$ of this form:
\[
\begin{tikzpicture}[scale=1.5]
\node (E) at (3,0) {$L'(X)$};
\node (F) at (5,0) {$L'(Y)$};
\node (G) at (4,0) {$\C$};
\node (E') at (3,-1) {$L'(X')$};
\node (F') at (5,-1) {$L(Y')$.};
\node (G') at (4,-1) {$\C'$};
\path[->,font=\scriptsize,>=angle 90]
(F) edge node[above]{$$} (G)
(E) edge node[left]{$L'(f)$} (E')
(F) edge node[right]{$L'(g)$} (F')
(G) edge node[left]{$h$} (G')
(E) edge node[above]{$$} (G)
(E') edge node[below]{$$} (G')
(F') edge node[below]{$$} (G');
\end{tikzpicture}
\]
\end{itemize}
We can turn a Petri net $P = (s,t \maps T \to \N[S])$ into a commutative monoidal category $FP$ as follows.  We take the commutative monoid of objects $\Ob(FP)$ to be the free commutative monoid on $S$.    We construct the commutative monoid of morphisms $\Mor(FP)$ as follows.  First we generate morphisms recursively:
\begin{itemize}
\item for every transition $\tau \in T$ we include a morphism $\tau \maps s(\tau) \to t(\tau)$;
\item for any object $a$ we include a morphism $1_a \maps a \to a$;
\item for any morphisms $f \maps a \to b$ and $g \maps a' \to b'$ we include a morphism denoted $f+g \maps a +a' \to b +b'$ to serve as their tensor product;
\item for any morphisms $f \maps a \to b$ and $g \maps b \to c$ we include a morphism $g\circ f \maps a \to c$ to serve as their composite.
\end{itemize}
Then we mod out by an equivalence relation on morphisms that imposes the laws of a commutative monoidal category, obtaining the commutative monoid $\Mor(FP)$.		
		
Let $F \maps \Petri \to \CMC$ be the functor that makes the following assignments on Petri nets and morphisms:
	\[
	\xymatrix{ 
		T \ar[d]_f  \ar@<-.5ex>[r]_{t} \ar@<.5ex>[r]^{s} & \N[S] \ar[d]^{\N[g] \quad \mapsto} &  FP \ar[d]^{F(f,g)} \\
		T' \ar@<-.5ex>[r]_{t'} \ar@<.5ex>[r]^{s'} & \N[S'] & FP'.
	}
	\]
Here $F(f,g) \maps FP \to FP'$ is defined on objects by $\N [g]$. On morphisms, $F(f,g)$ is the unique map extending $f$ that preserves identities, composition, and the tensor product.

\begin{lem}
\label{lem:F_Petri}
The functor
\[ F \maps \Petri \to \CMC \] 
is a left adjoint.
\end{lem}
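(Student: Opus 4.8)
The plan is to produce a right adjoint $G \maps \CMC \to \Petri$ and verify the adjunction by exhibiting a bijection $\CMC(FP,\C) \cong \Petri(P, G\C)$ natural in $P \in \Petri$ and $\C \in \CMC$. The starting point is the description of morphisms out of $FP$ recorded just above: since $FP$ is the commutative monoidal category \emph{freely} generated by $P = (s,t \maps T \to \N[S])$, a strict monoidal (hence automatically symmetric, since braidings are identities) functor $\Psi \maps FP \to \C$ amounts precisely to a pair consisting of a function $h \maps S \to \Ob(\C)$ --- extended uniquely to a monoid homomorphism $\hat h \maps \N[S] \to \Ob(\C)$ on objects --- together with, for each transition $\tau \in T$, a morphism $m_\tau$ of $\C$ with $\mathrm{dom}(m_\tau) = \hat h(s(\tau))$ and $\mathrm{cod}(m_\tau) = \hat h(t(\tau))$.

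I would then define $G\C$ to be the Petri net whose set of places is the underlying set of the commutative monoid $\Ob(\C)$, and whose set of transitions $T_{G\C}$ is the limit of the diagram
\[ \N[\Ob(\C)] \xrightarrow{\ p\ } \Ob(\C) \xleftarrow{\ \mathrm{dom}\ } \Mor(\C) \xrightarrow{\ \mathrm{cod}\ } \Ob(\C) \xleftarrow{\ p\ } \N[\Ob(\C)], \]
where $p$ is the canonical evaluation homomorphism out of the free commutative monoid on the set $\Ob(\C)$: concretely a transition of $G\C$ is a triple $(u,m,v)$ with $u,v \in \N[\Ob(\C)]$, $m \in \Mor(\C)$, $p(u) = \mathrm{dom}(m)$ and $p(v) = \mathrm{cod}(m)$, and the source and target functions $T_{G\C} \to \N[\Ob(\C)]$ are the projections $(u,m,v) \mapsto u$ and $(u,m,v) \mapsto v$. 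On morphisms, a strict monoidal functor $\C \to \C'$ induces a function $\Ob(\C) \to \Ob(\C')$, hence a homomorphism $\N[\Ob(\C)] \to \N[\Ob(\C')]$, hence a map of these limits; functoriality is immediate.

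Granting this, the bijection is an unwinding of definitions. A morphism $(f,g) \maps P \to G\C$ is a function $g \maps S \to \Ob(\C)$ together with a function $f \maps T \to T_{G\C}$, say $\tau \mapsto (u_\tau, m_\tau, v_\tau)$, satisfying $\N[g](s(\tau)) = u_\tau$ and $\N[g](t(\tau)) = v_\tau$. Composing with $p$ and using $\hat h = p \circ \N[g]$ (with $h = g$) converts the built-in constraints $p(u_\tau) = \mathrm{dom}(m_\tau)$, $p(v_\tau) = \mathrm{cod}(m_\tau)$ into $\hat h(s(\tau)) = \mathrm{dom}(m_\tau)$, $\hat h(t(\tau)) = \mathrm{cod}(m_\tau)$; so the data $(g, \tau \mapsto m_\tau)$ is exactly the data describing a morphism $FP \to \C$, and this correspondence is visibly invertible and natural in both arguments. (One could equivalently package this as a unit $\eta_P \maps P \to GFP$ and counit $\varepsilon_\C \maps FG\C \to \C$ and check the triangle identities, but naturality of the hom-bijection is cleaner here.)

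The main obstacle is getting $G$ right: because the commutative monoid $\Ob(\C)$ is in general \emph{not} free, one cannot take the transitions of $G\C$ to be simply the morphisms of $\C$ with source $\mathrm{dom}(m)$ and target $\mathrm{cod}(m)$, since the equation $\N[g](s(\tau)) = \mathrm{dom}(m_\tau)$ lives in $\N[\Ob(\C)]$, where a sum of distinct generators never collapses; one is therefore forced to carry the extra lifting data $u,v \in \N[\Ob(\C)]$. Once this is built into the definition, the remaining verifications (well-definedness of $G$ on morphisms, naturality of the bijection) are routine. If one only wants \emph{existence} of a right adjoint, an alternative is to note that $\Petri$ and $\CMC$ are locally presentable --- being categories of models of essentially algebraic theories, cocomplete by Lemmas \ref{lem:Petri_colimit} and \ref{lem:CMC_cocomplete} --- and that $F$ preserves colimits, being defined by a free recursive construction, so the adjoint functor theorem for locally presentable categories applies.
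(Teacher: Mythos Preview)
Your construction is correct and more informative than the paper's own argument, which simply cites \cite[Theorem~5.1]{Master} without further detail.  The explicit right adjoint $G$ you build---places the set $\Ob(\C)$, transitions the triples $(u,m,v)$ with $u,v\in\N[\Ob(\C)]$ lifting $\mathrm{dom}(m)$ and $\mathrm{cod}(m)$ along the evaluation $p$---is exactly the ``underlying Petri net'' functor one expects, and you are right to stress that the lifts $u,v$ must be carried as data because $\Ob(\C)$ need not be free.  The hom-set bijection then falls out cleanly from the universal property of $FP$ as described in the paragraph preceding the lemma.  So where the paper outsources the proof to a general theorem about Petri nets over Lawvere theories, you give a direct, elementary verification tailored to the case at hand; the trade-off is that Master's result covers a family of such adjunctions at once, while yours is specific but self-contained.

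One small caution about your closing alternative via the adjoint functor theorem: invoking it requires knowing that $F$ preserves colimits, which you justify only by calling $F$ a ``free recursive construction.''  That is morally right but not a proof; making it rigorous would amount to either checking colimit preservation directly (which is roughly as much work as your main argument) or observing that the recursive description already \emph{is} a left-adjoint construction---at which point the AFT is redundant.  Your primary argument via the explicit $G$ is the cleaner route and stands on its own.
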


\begin{proof}
This is a special case of \cite[Theorem 5.1]{Master}.
\end{proof}

We thus obtain a triangle of left adjoint functors, which commutes up to natural isomorphism:
\[
\begin{tikzpicture}[scale=1.5]
\node (A) at (0,0) {$\Set$};
\node (B) at (1.3,0) {$\Petri$};
\node (D) at (1.3,-1.3) {$\CMC$};
\node (F) at (0.8,-0.4) {$\alpha \! \NEarrow$};
\path[->,font=\scriptsize,>=angle 90]
(A) edge node [above] {$L$} (B)
(B) edge node [right]{$F$} (D)
(A) edge node [below] {$L'$} (D);
\end{tikzpicture}
\]
As a result we obtain:

\begin{thm}
\label{thm:functoriality}
There is a symmetric monoidal double functor 
\[          \Open(F) \maps \Open(\Petri) \to \Open(\CMC) \]
that is the identity on objects and vertical 1-morphisms and makes the following assignments on horizontal 1-cells and 2-morphisms:
  \[ \xymatrix{	LX \ar[r]^i \ar[d]_{Lf} & P \ar[d]_{h} & LY \ar[l]_o \ar[d]^{Lg \qquad {\Huge{\mapsto}} \quad } & &
  L'X \ar[r]^{F(i) \alpha_X} \ar[d]_{L'f} & FP \ar[d]_{Fh} & L'Y \ar[l]_{F(o) \alpha_Y} \ar[d]^{L'g } 
   \\
			  	LX' \ar[r]^{i'} & P'  & LY' \ar[l]_{o'} & &
			  	L'X' \ar[r]^{F(i') \alpha_{X^\prime}}  & FP'  & L'Y'. \ar[l]_{F(o') \alpha_{Y^\prime}} }
\]
\end{thm}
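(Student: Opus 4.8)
The plan is to derive this theorem as a direct application of Theorem \ref{thm:symmetric_monoidal_double_functor}. That theorem turns a square commuting up to isomorphism in $\Rex$ into a symmetric monoidal double functor between the associated structured cospan double categories, and the relevant square here is obtained from the triangle of left adjoints displayed just before the statement: it has $\Set$ in both left-hand corners, $\Petri$ in the top-right corner, $\CMC$ in the bottom-right corner, top arrow $L$, bottom arrow $L'$, left arrow $\id_{\Set}$, right arrow $F$, and filler the natural isomorphism $\alpha \maps L' \To FL$ witnessing that the triangle commutes up to isomorphism.

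First I would verify the hypotheses of Theorem \ref{thm:symmetric_monoidal_double_functor}: all four categories must have (chosen) finite colimits and all four functors must preserve them. Indeed $\Set$ has finite colimits, $\Petri$ does by Lemma \ref{lem:Petri_colimit}, and $\CMC$ does by Lemma \ref{lem:CMC_cocomplete}; choose finite colimits in each. The functor $\id_{\Set}$ preserves them trivially, while $L$, $L'$ and $F$ preserve them because they are left adjoints, by Lemmas \ref{lem:L_Petri}, \ref{lem:L'_CMC} and \ref{lem:F_Petri}. Since $\alpha$ is a natural transformation between the right exact functors $L' \circ \id_{\Set}$ and $F \circ L$, it is a $2$-morphism in $\Rex$, and it is invertible by assumption; hence the square commutes up to isomorphism in $\Rex$. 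Applying Theorem \ref{thm:symmetric_monoidal_double_functor} then produces a symmetric monoidal double functor ${}_L\lCsp(\Petri) \to {}_{L'}\lCsp(\CMC)$, which is exactly $\Open(F) \maps \Open(\Petri) \to \Open(\CMC)$.

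Finally I would read off the explicit description from Theorem \ref{thm:double_functor}, specialized to $F_0 = \id_{\Set}$ and $F_1 = F$. Then $\lF_0 = \id_{\Set}$, so $\Open(F)$ fixes objects (sets) and vertical $1$-morphisms (functions); on a horizontal $1$-cell $L(X) \xrightarrow{i} P \xleftarrow{o} L(Y)$ it returns $L'(X) \xrightarrow{F(i)\alpha_X} FP \xleftarrow{F(o)\alpha_Y} L'(Y)$; and on a $2$-morphism with components $(Lf, h, Lg)$ it returns the $2$-morphism with components $(L'f, Fh, L'g)$. These are precisely the assignments in the statement, and the associator/unitor coherence $2$-isomorphisms $\lF_\odot$ and $\lF_U$ are those given in Theorem \ref{thm:double_functor}.

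I expect no genuine analytic difficulty here: the heavy lifting was already done in the proofs of Theorems \ref{thm:double_functor} and \ref{thm:symmetric_monoidal_double_functor}. The one point requiring care is bookkeeping — confirming that the triangle of Lemmas \ref{lem:L_Petri}, \ref{lem:L'_CMC}, \ref{lem:F_Petri} really furnishes a square in $\Rex$ of the shape Theorem \ref{thm:symmetric_monoidal_double_functor} expects, with the bottom-left corner $\A'$ taken to be $\Set$ and $F_0 = \id_{\Set}$, and in particular that the filler has the correct variance, namely $\alpha \maps L' \circ \id_{\Set} \To F \circ L$.
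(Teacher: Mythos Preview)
Your proposal is correct and follows essentially the same route as the paper: both regard the triangle of left adjoints as the degenerate square in $\Rex$ with $F_0 = \id_{\Set}$ and $F_1 = F$, then invoke the general machinery for maps between structured cospan double categories. If anything, you are slightly more careful than the paper's own proof, which cites only Theorem \ref{thm:double_functor} whereas the symmetric monoidal claim really requires Theorem \ref{thm:symmetric_monoidal_double_functor}, exactly as you do.
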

\begin{proof}
The triangle above is a degenerate case of the square studied in Theorem \ref{thm:double_functor}:
\[
\begin{tikzpicture}[scale=1.5]
\node (A) at (0,0) {$\Set$};
\node (B) at (1,0) {$\Petri$};
\node (C) at (0,-1) {$\Set$};
\node (D) at (1,-1) {$\CMC$};
\node (E) at (0.5,-0.5) {$\alpha \NEarrow$};
\path[->,font=\scriptsize,>=angle 90]
(A) edge node[above]{$L$} (B)
(B) edge node [right]{$F$} (D)
(C) edge node [above] {$L'$} (D)
(A) edge node[left]{$1$}(C);
\end{tikzpicture}
\]
and applying that theorem we obtain the desired result.
\end{proof}

In the language of computer science, the commutative monoidal category $FP$ provides an `operational semantics' for the Petri net $P$: morphisms in this category are processes allowed by the Petri net. The above theorem says that this semantics is compositional.  That is, if we write $P$ as a composite (or tensor product) of smaller open Petri nets, $FP$ will be the composite (or tensor product) of the corresponding open commutative monoidal categories. 

\subsection{Petri nets with rates}
\label{subsec:RxNet}

Chemists often describe collections of chemical reactions using `reaction networks'.  
They have a standard formalism for obtaining a dynamical system from any reaction network where each reaction is labeled by a positive real number called its `rate constant' \cite{HJ}.  Reaction networks equipped with rate constants are equivalent to Petri nets where every transition is labeled by a positive real number.   These are sometimes called `stochastic' 
Petri nets, and they are used not only in chemistry but also biology and other fields \cite{Haas,MBCDF}.   

Pollard and the first author studied `open' reaction networks using decorated cospans \cite{BP}.  Here we show how to translate some of that work into the language of structured cospans.   We need a finiteness condition in many applications, so we include that from the start.

\begin{defn}
A \define{Petri net with rates} is a Petri net $s,t \maps T \to \N[S]$ where $S$ and $T$ are finite sets, together with a function $r \maps T \to (0,\infty)$.  We call $r(\tau)$ the \define{rate constant} of the transition $\tau \in T$. A \define{morphism} from the Petri net with rates
\[\xymatrix{(0,\infty) & T  \ar[l]_-r \ar@<-.5ex>[r]_-t \ar@<.5ex>[r]^-s & \N[S] } \]
to the Petri net with rates
\[\xymatrix{(0,\infty) & T' \ar[l]_-{r'} \ar@<-.5ex>[r]_-{t'} \ar@<.5ex>[r]^-{s'} & \N[S'] } \]
 is a morphism  $f \maps T \to T', g \maps S \to S'$  of the underlying Petri nets such that the following diagram also commutes:
 \[
\begin{tikzpicture}[scale=1.5]
\node (E) at (3,-0.5) {$(0,\infty)$};
\node (G) at (4,0) {$T$};
\node (G') at (4,-1) {$T'$};
\path[->,font=\scriptsize,>=angle 90]
(G) edge node[right]{$f$} (G')
(G) edge node[above]{$r$} (E)
(G') edge node[below]{$r'$} (E);
\end{tikzpicture}
\]
Let $\Petri_r$ be the category of Petri nets with rates and morphisms between them,
with composition defined by 
\[  (f, g) \circ (f',g') = (f \circ f' , g \circ g')  .\]
\end{defn}

There is a functor $R \maps \Petri_r \to \Set$ that sends any Petri net with rates to its underlying set of places
\[ 	
	\xymatrix{ 
		(0,\infty)  \ar[d]_1 & T  \ar[l]_-r \ar[d]_f  \ar@<-.5ex>[r]_-{t} \ar@<.5ex>[r]^-{s} & \N[S] \ar[d]^{\N[g] \quad \mapsto} &  S \ar[d]^{g} \\
		(0,\infty) & T'  \ar[l]_-{r'} \ar@<-.5ex>[r]_-{t'} \ar@<.5ex>[r]^-{s'} & \N[S'] & \; S'.
	}
\]
To build a structured cospan category we use the left adjoint of $R$, and we need
$\Petri_r$ to have finite colimits.

\begin{lem}
\label{lem:L_Petri_r}
The functor $R$ has a left adjoint $L \maps \Set \to \Petri_r$ defined on sets and functions as follows:
\[
	\xymatrix{
		X \ar[d]_f^{ \quad \mapsto} & (0,\infty)  \ar[d]_1 & \emptyset  \ar[l]  \ar[d] \ar@<-.5ex>[r] \ar@<.5ex>[r] & \N[X] 		\ar[d]^{\N[f]} \\
		Y & (0,\infty) & \emptyset \ar[l] \ar@<-.5ex>[r] \ar@<.5ex>[r] & \N[Y]
	}
\]
where the unlabeled maps are the unique maps of that type. 
\end{lem}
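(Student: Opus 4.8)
The plan is to establish the adjunction $L \dashv R$ directly, by producing a bijection
\[
\Petri_r\bigl(L(X),\, Q\bigr) \;\toiso\; \Set\bigl(X,\, R(Q)\bigr)
\]
natural in $X \in \Set$ and $Q \in \Petri_r$. First I would write $Q$ as the diagram $r_Q \maps T_Q \to (0,\infty)$ together with $s_Q, t_Q \maps T_Q \to \N[S_Q]$, so that $R(Q) = S_Q$, and unravel what a morphism $L(X) \to Q$ is: by Definition \ref{defn:PetriNet} and the definition of $\Petri_r$ it is a pair of functions $(\phi, \psi)$, where $\phi$ goes from the transition set of $L(X)$ to $T_Q$ and $\psi \maps X \to S_Q$, required to make the source square, the target square, and the rate triangle commute. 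Since $L(X)$ has empty transition set, $\phi$ is forced to be the unique function $\emptyset \to T_Q$, and the three commutativity conditions are assertions about functions out of $\emptyset$, hence vacuously true. So a morphism $L(X) \to Q$ is exactly a function $\psi \maps X \to R(Q)$, and $(\phi,\psi) \mapsto \psi$ is the required bijection.

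Next I would check that this bijection is natural in both variables, which follows at once from how $L$ and $R$ act on morphisms: for $(f,g) \maps Q \to Q'$ the place-component of $(f,g)\circ(\phi,\psi)$ is $g\circ\psi$ while $R(f,g) = g$, and for $h \maps X' \to X$ the functor $L$ sends $h$ to a morphism with place-component $h$ and empty transition-component. Alternatively, I would note that the lemma can be read off from Lemma \ref{lem:L_Petri}: because $L(X)$ has no transitions, forgetting rates gives a bijection between the morphisms $L(X) \to Q$ in $\Petri_r$ and the morphisms in $\Petri$ out of the underlying Petri net of $L(X)$ into the underlying Petri net of $Q$; since the underlying Petri net of $L(X)$ is exactly the value at $X$ of the left adjoint of Lemma \ref{lem:L_Petri}, and the functor $R$ here factors as $\Petri_r \to \Petri \to \Set$, composing with that adjunction yields the claim.

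I do not expect any genuinely hard step. The only thing to be careful about is that the rate function contributes no extra constraint on a morphism out of $L(X)$, precisely because $L(X)$ has no transitions, so the rate triangle in the definition of a morphism of Petri nets with rates is automatically satisfied; beyond that the argument is the same routine bookkeeping that proves the analogous adjunction \cite[Lemma 11]{BM} for Petri nets without rates.
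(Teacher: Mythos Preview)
Your proposal is correct and is essentially what the paper has in mind: its proof reads simply ``This is easily checked from the definitions,'' and your argument is precisely the direct hom-set verification that this sentence gestures at. You have spelled out the details (and even offered an alternative factoring through Lemma~\ref{lem:L_Petri}), but there is no divergence in approach.
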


\begin{proof}
This is easily checked from the definitions.
\end{proof}

\begin{lem}
\label{lem:Petri_r_colimit}
The category $\Petri_r$ has finite colimits.
\end{lem}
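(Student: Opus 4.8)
The plan is to present $\Petri_r$ as a comma category built from the category $\Petri$, whose small colimits are already available by Lemma \ref{lem:Petri_colimit}, and to transport finite colimits along that presentation.

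First I would pin down just enough about colimits in $\Petri$ to control the finiteness condition. Consider the functor $V \maps \Petri \to \Set \times \Set$ sending a Petri net $s,t \maps T \to \N[S]$ to the pair $(T,S)$. The key observation is that $V$ is a left adjoint: its right adjoint sends a pair of sets $(A,B)$ to the Petri net with place set $B$, transition set $A \times \N[B] \times \N[B]$, and source and target maps the two projections onto $\N[B]$; a short check with the source and target squares shows that a morphism of Petri nets into this object is the same as a function $T \to A$ together with a function $S \to B$. Hence $V$ preserves all colimits, and since a finite colimit of finite sets is finite, the full subcategory $\Petri_{\mathrm{fin}} \subseteq \Petri$ of Petri nets with finitely many places and transitions is closed under finite colimits; by Lemma \ref{lem:Petri_colimit} it therefore has finite colimits, computed as in $\Petri$. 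In particular the functor $\mathrm{Trans} \maps \Petri_{\mathrm{fin}} \to \Set$ picking out the set of transitions, being $V$ followed by the first projection, preserves finite colimits.

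Next I would note that, unwinding the definition of a Petri net with rates and of a morphism between them, $\Petri_r$ is exactly the comma category $\mathrm{Trans} \downarrow (0,\infty)$, where $(0,\infty)$ is regarded as an object of $\Set$: an object is a finite Petri net $P$ together with a function $r \maps \mathrm{Trans}(P) \to (0,\infty)$, and a morphism is a morphism $\phi \maps P \to P'$ of Petri nets with $r' \circ \mathrm{Trans}(\phi) = r$. Such a comma category over an object inherits every colimit whose shape is admitted by $\Petri_{\mathrm{fin}}$ and preserved by $\mathrm{Trans}$; since $\Petri_{\mathrm{fin}}$ has finite colimits and $\mathrm{Trans}$ preserves them, $\Petri_r$ has finite colimits. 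Concretely, the colimit of a finite diagram $(P_j, r_j)$ is $\colim_j P_j$, which is finite by the previous paragraph, equipped with the rate function on its transition set $\colim_j \mathrm{Trans}(P_j)$ induced by the universal property from the cocone of the $r_j$. There is no serious obstacle: the construction is formal once the comma-category framing is in place, and the only point needing a genuine (if brief) argument is that $V$ is a left adjoint --- equivalently, that a finite colimit in $\Petri$ of Petri nets with finitely many places and transitions again has finitely many of each --- which I would settle by exhibiting the right adjoint above and verifying the natural bijection via a routine chase of the source and target maps.
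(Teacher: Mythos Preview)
Your argument is correct. Both you and the paper prove the lemma via a comma-category presentation, but the two presentations are different. The paper observes directly that $\Petri_r$ is the comma category $f/g$ where $f$ is the inclusion $\Fin\Set \hookrightarrow \Set$ and $g \maps \Fin\Set \to \Set$ is $S \mapsto (0,\infty)\times \N[S]^2$ (there is a small typo in the stated codomain), and then invokes the general fact that $f/g$ inherits finite colimits when the domain of $f$ has them and $f$ preserves them. This is a one-step reduction to $\Fin\Set$.

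Your route instead factors through $\Petri_{\mathrm{fin}}$: you first show, by exhibiting the right adjoint of $V\maps \Petri \to \Set\times\Set$, that finite Petri nets are closed under finite colimits in $\Petri$, and then realise $\Petri_r$ as the slice $\mathrm{Trans}\downarrow (0,\infty)$ over $\Petri_{\mathrm{fin}}$. The paper's approach is shorter and avoids the auxiliary adjunction, while yours yields a bit more along the way---namely that $V$ is a left adjoint and an explicit picture of colimits in $\Petri_{\mathrm{fin}}$---at the cost of an extra step. Either works.
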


\begin{proof}
Note that $\Petri_r$ is equivalent to the comma category  $f/g$ where $f \maps \Fin\Set \to \Fin\Set$ is the identity and $g \maps \Fin\Set \to \Fin\Set$ is $(0,\infty) \times \N[-]^2$.   Whenever $A$ and $B$ are have finite colimits, $f \maps A \to C$ preserves finite colimits and $g \maps B \to C$ is any functor, then $f/g$ has finite colimits \cite[Section 5.2, Theorem 3]{BR}.  
\end{proof}

As a consequence of these lemmas, Corollary \ref{cor:LCsp(X) symmetric category} gives a symmetric monoidal category $_L \Csp(\Petri_r)$, or $\mathsf{Open}(\Petri_r)$ for
short, in which:
\begin{itemize}
\item an object is a finite set,
\item a morphism is an isomorphism class of open Petri nets with rates, where an
\define{open Petri net with rates} is a cospan in $\Petri_r$ of this form:
\[
\begin{tikzpicture}[scale=1.5]
\node (A) at (0,0) {$L(X)$};
\node (B) at (1,0) {$P$};
\node (C) at (2,0) {$L(Y)$,};
\path[->,font=\scriptsize,>=angle 90]
(A) edge node[above,left]{$$} (B)
(C)edge node[above]{$$}(B);
\end{tikzpicture}
\]
and an \define{isomorphism} of such is a commutative diagram in $\Petri_r$ of this form:
\[
\begin{tikzpicture}[scale=1.5]
\node (E) at (3,-0.5) {$L(X)$};
\node (F) at (5,-0.5) {$L(Y)$};
\node (G) at (4,0) {$P$};
\node (G') at (4,-1) {$P'$};
\path[->,font=\scriptsize,>=angle 90]
(F) edge node[above]{$o$} (G)
(G) edge node[left]{$h$} (G')
(E) edge node[above]{$i$} (G)
(E) edge node[below]{$i'$} (G')
(F) edge node[below]{$o'$} (G');
\end{tikzpicture}
\]
where $h$ is an isomorphism.
\end{itemize} 

Pollard and the first author \cite{BP} used decorated cospans to construct a symmetric monoidal category $\RxNet$ equivalent to $\mathsf{Open}(\Petri_r)$.  They avoided the `redundancy problem' using a trick explained in Section \ref{sec:decorated}.   Namely, they used a symmetric lax monoidal functor $F' \maps (\Fin\Set,+) \to (\Set,\times)$ sending any finite set $S$ to the set of \emph{isomorphism classes} of Petri nets with rates having $S$ as their set of places.

Pollard and the first author then constructed a symmetric monoidal functor from $\RxNet$ to a category $\Dynam$ of `open dynamical systems', and a further symmetric monoidal functor from $\Dynam$ assigning to each open dynamical system the relation between its inputs and outputs that holds in steady state.   Thanks to the equivalence between  $\RxNet$ and $\mathsf{Open}(\Petri_r)$, these functors can also be construed as functors out of the structured cospan category $\mathsf{Open}(\Petri_r)$.  Thus, structured cospans can be used to study both the dynamics and the steady states of open systems of chemical reactions.

\appendix
\section{Double Categories}
\label{appendix}

What follows is a brief review of double categories. A more detailed exposition can be found in the work of Grandis and Par\'e \cite{GP1,GP2}, and for monoidal double categories the work of Hansen and Shulman \cite{HS,Shul2007,Shul2010}.  We use `double category' to mean what earlier authors called a `pseudo double category'.

\begin{defn}
\label{defn:double_category}
A \textbf{double category} is a weak category in $\Cat$. More explicitly, a double category $\lD$ consists of:
\begin{itemize}
\item a \define{category of objects} $\lD_0$ and a \define{category of arrows} $\lD_1$,
\item  \define{source} and \define{target} functors
\[  S,T \colon \lD_1 \to \lD_0 ,\]
an \define{identity-assigning} functor
\[  U\colon \lD_0 \to \lD_1 ,\]
and a \define{composition} functor
\[ \odot \colon \lD_1 \times_{\lD_0} \lD_1 \to \lD_1 \]
where the pullback is taken over $\lD_1 \xrightarrow[]{T} \lD_0 \xleftarrow[]{S} \lD_1$,
such that
\[  S(U_{A})=A=T(U_{A}) , \quad
	S(M \odot N)=SN, \quad
   T(M \odot N)=TM, \]
\item natural isomorphisms called the \define{associator}
\[ \alpha_{N,N',N''} \maps (N \odot N') \odot N'' \toiso N \odot (N' \odot N'') , \]
the \define{left unitor}
\[		\lambda_N \maps U_{T(N)} \odot N \toiso N, \]
and the \define{right unitor}
\[  \rho_N \maps N \odot U_{S(N)} \toiso N \]
such that $S(\alpha), S(\lambda), S(\rho), T(\alpha), T(\lambda)$ and $T(\rho)$ are all identities, and such that the standard coherence axioms hold: the pentagon identity for the 
associator and the triangle identity for the left and right unitor. 
\end{itemize}
If $\alpha$, $\lambda$ and $\rho$ are identities, we call $\lD$ a \define{strict} double category.
\end{defn}

Objects of $\lD_0$ are called \define{objects} and morphisms in $\lD_0$ are called \define{vertical 1-morphisms}.  Objects of $\lD_1$ are called \define{horizontal 1-cells} of $\lD$ and morphisms in $\lD_1$ are called \define{2-morphisms}.   A morphism $\alpha \maps M \to N$ in $\lD_1$ can be drawn as a square:
\[
\begin{tikzpicture}[scale=1]
\node (D) at (-4,0.5) {$a$};
\node (E) at (-2,0.5) {$b$};
\node (F) at (-4,-1) {$c$};
\node (A) at (-2,-1) {$d$};
\node (B) at (-3,-0.25) {$\Downarrow \alpha$};
\path[->,font=\scriptsize,>=angle 90]
(D) edge node [above]{$M$}(E)
(E) edge node [right]{$g$}(A)
(D) edge node [left]{$f$}(F)
(F) edge node [above]{$N$} (A);
\end{tikzpicture}
\]
where $f = S\alpha$ and $g = T\alpha$.  If $f$ and $g$ are identities we call $\alpha$ a \textbf{globular 2-morphism}.  These give rise to a bicategory:

\begin{defn}
\label{defn:horizontal}
Let $\lD$ be a double category. Then the \define{horizontal bicategory} of $\lD$, denoted $H(\lD)$, is the bicategory consisting of objects, horizontal 1-cells and globular 2-morphisms of $\lD$.
\end{defn}

We have maps between double categories, and also transformations between maps:

\begin{defn}
\label{defn:double_functor}
Let $\lA$ and $\lB$ be double categories. A \define{double functor} $\lF \maps \lA \to \lB$ consists of:
\begin{itemize}
\item functors $\lF_0 \maps \lA_0 \to \lB_0$ and $\lF_1 \maps \lA_1 \to \lB_1$ obeying the following
equations: 
\[S \circ \lF_1 = \lF_0 \circ S, \qquad T \circ \lF_1 = \lF_0 \circ T,\]
\item natural isomorphisms called the \define{composition comparison}: 
\[   \phi(N,N') \maps \lF_1(N) \odot \lF_1(N') \toiso \lF_1(N \odot N') \]
and the \define{unit comparison}:
\[  \phi_{A} \maps U_{\lF_0 (A)} \toiso \lF_1(U_A) \]
whose components are globular 2-morphisms, 
\end{itemize}
such that the following diagrams commmute:
\begin{itemize} 
\item a diagram expressing compatibility with the associator: 
\[\xymatrix{ 	(\lF_1(N) \odot \lF_1(N')) \odot \lF_1(N'') \ar[d]_{\phi (N,N') \odot 1} \ar[r]^{\alpha} & \lF_1(N) \odot (\lF_1(N') \odot \lF_1(N'')) \ar[d]^{1 \odot \phi(N',N'')} \\
			\lF_1(N \odot N') \odot \lF_1(N'') \ar[d]_{\phi(N \odot N', N'')} & \lF_1(N) \odot \lF_1(N' \odot N'') \ar[d]^{\phi(N, N'\odot N'')}\\
\lF_1((N \odot N') \odot N'') \ar[r]^{\lF_1(\alpha)} & \lF_1(N \odot (N' \odot N'')) }	\]
\item two diagrams expressing compatibility with the left and right unitors:
	\[
	\begin{tikzpicture}[scale=1.5]
	\node (A) at (1,1) {$\lF_1(N) \odot U_{\lF_0(A)}$};
	\node (A') at (1,0) {$\lF_1(N) \odot \lF_1(U_{A})$};
	\node (C) at (3.5,1) {$\lF_1(N)$};
	\node (C') at (3.5,0) {$\lF_1(N \odot U_A)$};
	\path[->,font=\scriptsize,>=angle 90]
	(A) edge node[left]{$1 \odot \phi_{A}$} (A')
	(C') edge node[right]{$\lF_1(\rho_N)$} (C)
	(A) edge node[above]{$\rho_{\lF_1(N)}$} (C)
	(A') edge node[above]{$\phi(N,U_{A})$} (C');
	\end{tikzpicture}
	\]
	\[
	\begin{tikzpicture}[scale=1.5]
	\node (B) at (5.5,1) {$U_{\lF_0(B)} \odot \lF_1(N)$};
	\node (B') at (5.5,0) {$\lF_1(U_{B}) \odot \lF_1(N)$};
	\node (D) at (8,1) {$\lF_1(N)$};
	\node (D') at (8,0) {$\lF_1(U_{B} \odot N).$};
		\path[->,font=\scriptsize,>=angle 90]
		(B) edge node[left]{$\phi_{B} \odot 1$} (B')
	(B') edge node[above]{$\phi(U_{B},N)$} (D')
	(B) edge node[above]{$\lambda_{\lF_1(N)}$} (D)
	(D') edge node[right]{$F_1(\lambda_{N})$} (D);
	\end{tikzpicture}
	\]
\end{itemize}
If the 2-morphisms $\phi(N,N')$ and $\phi_A$ are identities for all $N,N' \in \lA_1$ and 
$A \in \lA_0$, we say $\lF \maps \lA \to \lB$ is a \define{strict} double functor.  If on the other hand we drop the requirement that these 2-morphisms be invertible, we call $\lF$ a \define{lax} double
functor.
\end{defn}
	
\begin{defn}
Let $\lF \maps \lA \to \lB$ and $\lG \maps \lA \to \lB$ be lax double functors. A \define{transformation} $\beta \maps \lF \Rightarrow \lG$ consists of natural transformations $\beta_0 \maps \lF_0 \Rightarrow \lG_0$ and $\beta_1 \maps \lF_1 \Rightarrow \lG_1$ (both usually written as $\beta$) such that 
		\begin{itemize}
			\item $S( \beta_M) = \beta_{SM}$ and $T(\beta_M) = \beta_{TM}$ for any $M \in \lA_1$, 
			\item $\beta$ preserves the composition comparison, and
			\item $\beta$ preserves the unit comparison.
		\end{itemize}
\end{defn}
Grandis and Par\'e define a 2-category $\Dbl$ of double categories, double functors, and transformations \cite{GP2}.  This has finite products.  In any 2-category with finite products we can define a pseudomonoid \cite{DayStreet}.
	
\begin{defn}
\label{defn:monoidal_double_category}
A \define{monoidal double category} is a pseudomonoid in $\Dbl$. Explicitly, a monoidal double category is a double category equipped with double functors $\otimes \maps \lD \times \lD \to \lD$ and $I \maps 1 \to \lD$ where $1$ is the terminal double category, along with invertible transformations called the \define{associator}:
\[  \alpha \maps \otimes \, \circ \; (1_{\lD} \times \otimes ) \Rightarrow \otimes \; \circ \; (\otimes \times 1_{\lD}) ,\]
\define{left unitor}:
\[ \ell \maps \otimes \, \circ \; (1_{\lD} \times I) \Rightarrow 1_{\lD} ,\]
and \define{right unitor}:
\[ r \maps \otimes \,\circ\; (I \times 1_{\lD}) \Rightarrow 1_{\lD} \]
satisfying the pentagon axiom and triangle axioms.
\end{defn}

This definition neatly packages a large quantity of information.   
In detail, a monoidal double category $\lD$ is a double category with:
\begin{itemize}
\item monoidal structures on both categories $\lD_0$ and $\lD_1$ (with tensor product denoted $\otimes$, associator $a$, left unitor $\ell$ and right unitor $r$), and
\item the structure of a double functor on $\otimes$:
that is, invertible globular 2-morphisms
\[ \chi \maps (M_1\otimes N_1)\odot (M_2\otimes N_2) \toiso
(M_1\odot M_2)\otimes (N_1\odot N_2)\]
\[ \mu \maps U_{A\otimes B} \stackrel{\sim}{\longrightarrow} (U_A \otimes U_B)\]
making these diagrams commute:
\[\xymatrix{
			((M_1\otimes N_1)\odot (M_2\otimes N_2)) \odot (M_3\otimes N_3) \ar[r]^-{\chi \odot 1} \ar[d]_{\alpha}
			& ((M_1\odot M_2)\otimes (N_1\odot N_2)) \odot (M_3\otimes N_3) \ar[d]^{\chi}\\
			(M_1\otimes N_1)\odot ((M_2\otimes N_2) \odot (M_3\otimes N_3)) \ar[d]_{1 \odot \chi} &
			((M_1\odot M_2)\odot M_3) \otimes ((N_1\odot N_2)\odot N_3) \ar[d]^{\alpha \otimes \alpha}\\
			(M_1\otimes N_1) \odot ((M_2\odot M_3) \otimes (N_2\odot N_3))\ar[r]^-{\chi} &
			(M_1\odot (M_2\odot M_3)) \otimes (N_1\odot (N_2\odot N_3))}
\]
\[\xymatrix{(M\otimes N) \odot U_{C\otimes D} \ar[r]^-{1 \odot \mu} \ar[d]_{\rho} &
			(M\otimes N)\odot (U_C\otimes U_D) \ar[d]^{\chi}\\
			M\otimes N\ar@{<-}[r]^-{\rho \otimes \rho} & (M\odot U_C) \otimes (N\odot U_D)}
\]
\[\xymatrix{U_{A\otimes B}\odot (M\otimes N)  \ar[r]^-{\mu \odot 1} \ar[d]_{\lambda} &
			(U_A\otimes U_B)\odot (M\otimes N) \ar[d]^{\chi}\\
			M\otimes N\ar@{<-}[r]^-{\lambda \otimes \lambda} & (U_A \odot M) \otimes (U_B\odot N)}
			\]
\end{itemize}
We also demand the following properties:
\begin{itemize}
\item If $I$ is the monoidal unit of $\lD_0$ then $U_I$ the monoidal unit of $\lD_1$.
\item The functors $S$ and $T$ are strict monoidal.
\item The associator and left and right unitors for the tensor product in $\lD$ are transformations between double functors.  In other words, the following six diagrams commute:
  \[\xymatrix{
    ((M_1\otimes N_1)\otimes P_1) \odot ((M_2\otimes N_2)\otimes P_2) \ar[r]^{a \odot a}\ar[d]_{\chi} &
    (M_1\otimes (N_1\otimes P_1)) \odot (M_2\otimes (N_2\otimes P_2)) \ar[d]^{\chi}\\
    ((M_1\otimes N_1) \odot (M_2\otimes N_2)) \otimes (P_1\odot P_2) \ar[d]_{\chi \otimes 1} &
    (M_1\odot M_2) \otimes ((N_1\otimes P_1)\odot (N_2\otimes P_2))\ar[d]^{1 \otimes \chi} \\
    ((M_1\odot M_2) \otimes (N_1\odot N_2)) \otimes (P_1\odot P_2) \ar[r]^{a} &
    (M_1\odot M_2) \otimes ((N_1\odot N_2)\otimes (P_1\odot P_2))}\]
  \[\xymatrix{
    U_{(A\otimes B)\otimes C} \ar[r]^{U_{a}} \ar[d]_{\mu} & U_{A\otimes (B\otimes C)} \ar[d]^{\mu}\\
    U_{A\otimes B} \otimes U_C \ar[d]_{\mu \otimes 1} & U_A\otimes U_{B\otimes C}\ar[d]^{1 \otimes \mu}\\
    (U_A\otimes U_B)\otimes U_C \ar[r]^{a} & U_A\otimes (U_B\otimes U_C) }\]
  \[\vcenter{\xymatrix{
      (U_I\otimes M)\odot (U_I\otimes N)\ar[r]^{\chi} \ar[d]_{\ell \odot \ell} &
      (U_I \odot U_I) \otimes (M\odot N) \ar[d]^{\lambda \otimes 1}\\
      M\odot N \ar@{<-}[r]^{\ell} &
      U_I\otimes (M\odot N) }}\]
  \[\vcenter{\xymatrix{U_{I\otimes A} \ar[r]^{\mu}\ar[dr]_{U_{\ell}} & U_I\otimes U_A \ar[d]^{\ell}\\
      & U_A}}\]
       \[\vcenter{\xymatrix{
      (M\otimes U_I)\odot (N\otimes U_I)\ar[r]^{\chi}\ar[d]_{r \odot r} &
      (M\odot N)\otimes (U_I \odot U_I) \ar[d]^{1 \otimes \rho}\\
      M\odot N \ar@{<-}[r]^{r} &
      (M\odot N)\otimes U_I }}\]
  \[\vcenter{\xymatrix{U_{A\otimes I} \ar[r]^{\mu} \ar[dr]_{U_r} & U_A\otimes U_I \ar[d]^{r}\\
       & U_A.}}\]
	\end{itemize}

\begin{defn}
\label{defn:symmetric_monoidal_double_category}
A \define{braided monoidal double category} is a braided pseudomonoid in $\Dbl$.  Explicitly,
it is a monoidal double category equipped with an invertible transformation
\[ \beta \maps \otimes \Rightarrow \otimes \circ \tau \]
called the \define{braiding}, where $\tau \maps \lD \times \lD \to \lD \times \lD$ is the twist double functor sending pairs in the object and arrow categories to the same pairs in the opposite order. The braiding is required to satisfy the usual two hexagon identities \cite{MacLane}.  If the braiding is self-inverse we say that $\lD$ is a \define{symmetric monoidal double category}. 
\end{defn}

\begin{defn}
\label{defn:monoidal_double_functor}
A \define{monoidal lax double functor} $\lF \colon \lC \to \lC'$ between monoidal double categories $\lC$ and $\lC'$ is a lax double functor $\lF \maps \lC \to \lC'$ such that
	\begin{itemize}
		\item $\lF_0$ and $\lF_1$ are monoidal functors,
		\item $S'\lF_1= \lF_0S$ and $T'\lF_1 = \lF_0T$ as monoidal functors, and
		\item the composition and unit comparisons $\phi(N_1,N_2) \maps \lF_1(N_1) \odot \lF_1(N_2) \to \lF_1(N_1\odot N_2)$ and $\phi_A \maps U_{\lF_0 (A)} \to \lF_1(U_A)$ are monoidal natural transformations.
	\end{itemize}
The monoidal lax double functor is \define{braided} if $\lF_0$ and $\lF_1$ are braided monoidal functors and \define{symmetric} if they are symmetric monoidal functors. 
\end{defn}

We also have transformations between double functors:

\begin{defn}
A \define{double transformation} $\Phi \maps \lF \Rightarrow \mathbb{G}$ between two double functors $\lF \maps \lX \to \lX'$ and $\mathbb{G} \maps \lX \to \lX'$ consists of two natural transformations $\Phi_0 \maps \lF_0 \Rightarrow \mathbb{G}_0$ and $\Phi_1 \maps \lF_1 \Rightarrow \mathbb{G}_1$ such that for all horizontal 1-cells $M$ we have that $S({\Phi_1}_{M})={\Phi_0}_{S(M)}$ and $T({\Phi_1}_{M})={\Phi_0}_{T(M)}$ and for composable horizontal 1-cells $M$ and $N$, we have 
\[
\begin{tikzpicture}[scale=1.5]
\node (A) at (0,1) {$\lF(x)$};
\node (E) at (1.5,1) {$\lF(y)$};
\node (C) at (3,1) {$\lF(z)$};
\node (A') at (0,-1) {$\mathbb{G}(x)$};
\node (C') at (3,-1) {$\mathbb{G}(z)$};
\node (F) at (0,0) {$\lF(x)$};
\node (G) at (3,0) {$\lF(z)$};
\node (H) at (4,0) {$=$};
\node (I) at (6.5,1) {$\lF(y)$};
\node (J) at (5,0) {$\mathbb{G}(x)$};
\node (K) at (6.5,0) {$\mathbb{G}(y)$};
\node (L) at (8,0) {$\mathbb{G}(z)$};
\node (M) at (1.5,0.5) {$\Downarrow \lF_{M,N}$};
\node (N) at (1.5,-0.5) {$\Downarrow {\Phi_1}_{M \odot N}$};
\node (B) at (5,1) {$\lF(x)$};
\node (B') at (5,-1) {$\mathbb{G}(x)$};
\node (D) at (8,1) {$\lF(z)$};
\node (D') at (8,-1) {$\mathbb{G}(z)$};
\node (O) at (5.75,0.5) {$\Downarrow {\Phi_1}_M$};
\node (P) at (7.25,0.5) {$\Downarrow {\Phi_1}_N$};
\node (Q) at (6.5,-0.5) {$\Downarrow {\mathbb{G}}_{M,N}$};
\path[->,font=\scriptsize,>=angle 90]
(A) edge node[left]{$1$} (F)
(F) edge node [left]{${\Phi_0}_x$} (A')
(C) edge node[right]{$1$} (G)
(G) edge node [right]{${\Phi_0}_z$} (C')
(A) edge node[above]{$\lF(M)$} (E)
(F) edge node [above]{$\lF(M \odot N)$} (G)
(E) edge node[above] {$\lF(N)$} (C)
(A') edge node[above]{$\mathbb{G}(M \odot N)$} (C')
(B) edge node[left]{${\Phi_0}_x$} (J)
(J) edge node [left]{$1$} (B')
(I) edge node [left]{${\Phi_0}_y$} (K)
(K) edge node [above]{$\mathbb{G}(N)$} (L)
(J) edge node [above]{$\mathbb{G}(M)$} (K)
(B') edge node[above]{$\mathbb{G}(M \odot N)$} (D')
(B) edge node[above]{$\lF(M)$} (I)
(I) edge node [above] {$\lF(N)$} (D)
(D) edge node[right]{${\Phi_0}_z$} (L)
(L) edge node[right]{$1$}(D');
\end{tikzpicture}
\]
\[
\begin{tikzpicture}[scale=1.5]
\node (A) at (0,1) {$\lF(x)$};
\node (C) at (3,1) {$\lF(x)$};
\node (A') at (0,-1) {$\mathbb{G}(x)$};
\node (C') at (3,-1) {$\mathbb{G}(x)$};
\node (F) at (0,0) {$\lF(x)$};
\node (G) at (3,0) {$\lF(x)$};
\node (H) at (4,0) {$=$};
\node (J) at (5,0) {$\mathbb{G}(x)$};
\node (L) at (8,0) {$\mathbb{G}(x)$};
\node (M) at (1.5,0.5) {$\Downarrow \lF_U$};
\node (N) at (1.5,-0.5) {$\Downarrow {\Phi_1}_{U_x}$};
\node (B) at (5,1) {$\lF(x)$};
\node (B') at (5,-1) {$\mathbb{G}(x)$};
\node (D) at (8,1) {$\lF(x)$};
\node (D') at (8,-1) {$\mathbb{G}(x)$};
\node (O) at (6.5,0.5) {$\Downarrow U_{{\Phi_0}_x}$};
\node (Q) at (6.5,-0.5) {$\Downarrow \mathbb{G}_U$};
\path[->,font=\scriptsize,>=angle 90]
(A) edge node[left]{$1$} (F)
(F) edge node [left]{${\Phi_0}_x$} (A')
(C) edge node[right]{$1$} (G)
(G) edge node [right]{${\Phi_0}_x$} (C')
(A) edge node[above]{$U_{\lF(x)}$} (C)
(F) edge node [above]{$\lF(U_x)$} (G)
(A') edge node[above]{$\mathbb{G}(U_x)$} (C')
(B) edge node[left]{${\Phi_0}_x$} (J)
(J) edge node [left]{$1$} (B')
(J) edge node [above]{$U_{\mathbb{G}(x)}$} (L)
(B') edge node[above]{$\mathbb{G}(U_x)$} (D')
(B) edge node[above]{$U_{\lF(x)}$} (D)
(D) edge node[right]{${\Phi_0}_x$} (L)
(L) edge node[right]{$1$}(D');
\end{tikzpicture}
\]
We call $\Phi_0$ the \define{object component} and $\Phi_1$ the \define{arrow component} of the double transformation $\Phi$.
\end{defn}

One can also define monoidal, braided monoidal and symmetric monoidal double transformations, but since we do not use these, we refer the reader to Hansen and Shulman for the details \cite[Definition 2.15]{HS}.

\end{document}